\titleformat{\subsection}[runin]
        {\normalfont\bfseries}
        {\thesubsection}
        {0.5em}
        {}
        [.]
\newtheorem{thm}{Theorem}[section]
\newtheorem {asp}{Assumption}[section]
\newtheorem{lem}{Lemma}[section]
\newtheorem{prop}{Proposition}[section]
\theoremstyle{definition}
\theoremstyle{remark}
\newtheorem{rem}{Remark}
\newtheorem{example}{Example}[section]
\numberwithin{equation}{section}
\newcommand{\eps}{\varepsilon}
\newcommand{\A}{\mathcal{A}}
\newcommand{\M}{\mathcal{M}}
\newcommand{\F}{\mathcal{F}}
\newcommand{\E}{\mathbb{E}}
\newcommand{\N}{\mathbb{N}}
\newcommand{\PP}{\mathbb{P}}
\newcommand{\K}{\mathcal{K}}
\newcommand{\R}{\mathbb{R}}
\newcommand{\Lom}{\mathcal{L}}
\newcommand{\abs}[1]{\left\vert#1\right\vert}
\numberwithin{equation}{section}
\newcommand{\bed}{\begin{displaymath}}
\newcommand{\eed}{\end{displaymath}}
\newcommand{\bea}{\bed\begin{array}{rl}}
\newcommand{\eea}{\end{array}\eed}
\newcommand{\barray}{\begin{array}{ll}}
\newcommand{\earray}{\end{array}}
\newcommand{\1}{\boldsymbol{1}}
\def\bar{\overline}
\def\hat{\widehat}
\def\a.s{\text{\;a.s.\;}}
\begin{document}
\title{ Permanence and Extinction for the Stochastic SIR Epidemic Model}
\author{N. H. Du\thanks{Department of Mathematics, Mechanics and
Informatics, Hanoi National University,
 334 Nguyen Trai, Thanh Xuan, Hanoi Vietnam, dunh@vnu.edu.vn. This research was
supported in part by Vietnam National Foundation for Science and Technology Development   (NAFOSTED)
n$_0$  101.03.2017.308} \, and \, N. N. Nhu \thanks{Corresponding author: Department of Mathematics, Wayne State University, Detroit MI 48202 USA, nguyen.nhu@wayne.edu, nhu.math.2611@gmail.com.}
}
\maketitle

\begin{abstract}
The  aim of this paper is to study the stochastic SIR equation with general incidence functional responses and in which both natural death rates and the incidence rate are perturbed by white noises. We derive a sufficient and almost necessary condition for the extinction and permanence for SIR epidemic system with multi noises
\begin{equation*}
\begin{cases}
dS(t)=\big[a_1-b_1S(t)-I(t)f(S(t),I(t))\big]dt + \sigma_1 S(t) dB_1(t) -I(t)g(S(t),I(t))dB_3(t),\\
dI(t)=\big[-b_2I(t) + I(t)f(S(t),I(t))\big]dt + \sigma_2I(t) dB_2(t) + I(t)g(S(t),I(t))dB_3(t).
\end{cases}
\end{equation*}
Moreover, the rate of all convergences of the solution are also established. A number of numerical examples are given to illustrate our results.

\medskip
\noindent {\bf Keywords.} SIR model; Extinction;  Permanence;
Stationary Distribution; Ergodicity.

\medskip
\noindent{\bf Subject Classification.} 34C12, 60H10, 92D25.

\medskip
\end{abstract}

\setlength{\baselineskip}{0.28in}
\section{Introduction}\label{sec:int}
The epidemic models have a very long history and have been widely studied because of their importance in ecology. Such models were first introduced by Kermack and McKendrick in \cite{KER1,KER2} and recently, much attention has been devoted to analyzing, predicting the spread and designing controls of infectious
diseases in host populations; see \cite{Lou2017,DANG,NHU1,NHU2,HUO,Dli-17,Dli-18,NNY20,NY19,RUAN,SHAN,SHAN1,YANG,ZHANG}. One of the classical epidemic models is the SIR model which is suitable for modeling some diseases with permanent immunity such as rubella, whooping cough, measles, smallpox, etc.
The SIR  epidemic models consist of three groups of  individuals: the susceptible, infected and recovered individuals, whose densities at the time $t$ are denoted by $S(t), I(t)$ and $R(t)$, respectively. The relationship between these quantities are in general described by the following equations
\begin{equation}\label{1.0}
\begin{cases}
dS(t)=\big[a_1-\mu_SS(t)-F(S(t),I(t))\big]dt, \\
dI(t)=\big[-(\mu_I+r)I(t) + F(S(t),I(t)\big]dt,\\
dR(t)=\big[-\mu_RR(t)+rI(t)\big]dt,
\end{cases}
\end{equation}
where $a_1>0$ is the recruitment rate of the population; $\mu_S,\mu_I,\mu_R>0$ are the death rates of the susceptible, infected and recovered individuals, respectively; $r>0$ is the recovery rate of the infected individuals and $F(S(t),I(t))$ is the incidence rate. To simplify the study, it has been noted that
the dynamics of recovered individuals have no effect on the disease transmission dynamics. Thus, following the usual practice,
the recovered individuals are removed from the formulation henceforth.
 Some kinds of the incidence rates are considered such as
\begin{itemize}
\item The Holling type II functional response (see e.g., \cite{HUO}): $F(S,I)=\frac{\beta SI}{m_1+S}\cdot$
\item The bilinear functional response (see e.g., \cite{DANG,ZHANG}): $F(S,I)=\beta SI$.
\item The nonlinear functional response (see e.g., \cite{RUAN,YANG}): $F(S,I)=\dfrac {\beta SI^l}{1+m_2I^h}\cdot$
\item The Beddington-DeAngelis functional response (see e.g., \cite {NHU1, NHU2}): 
$F(S,I)=\dfrac{\beta SI}{1+m_1S+m_2I}\cdot$
\end{itemize}
For the deterministic SIR models with these incidence rates, the researchers have found the reproduction number $R_0$ which has the property: if $R_0<1$ then the disease free equilibrium point is locally asymptotically stale; in case $R_0>1$ we see that the disease point is unstable and there is a steady state, which is locally asymptotically stable. 

However, it is well recognized that the environment is often affected by some random factors such as the temperature, the climate, the water resources, etc. Thus, it is important to consider the stochastic epidemic models. By these random effects, the death rates and the incidence rate are often perturbed by white noises. Many authors have considered the stochastic SIR models when the natural death rates are affected by white noises, i.e., $\mu_S \hookrightarrow \mu_S + \sigma_1\dot B_1(t), \mu_I+r\hookrightarrow \mu_I+r+\sigma_2 \dot B_2(t)$, where $B_i(t),i=1,2$ are Brownian motions and the stochastic equation in general has the form (see e.g., \cite{DANG,NHU2,YANG})
\begin{equation*}
\begin{cases}
dS(t)=\big[a_1-b_1S(t)-I(t)f(S(t),I(t))\big]dt + \sigma_1 S(t) dB_1(t),\\
dI(t)=\big[-b_2I(t) + I(t)f(S(t),I(t))\big]dt + \sigma_2I(t) dB_2(t),\\
\end{cases}
\end{equation*}
where,
we have rewritten the coefficients: $b_1=\mu_S, b_2=\mu_I+r$ and $F(S,I)=If(S,I)$ (compared with \eqref{1.0}).
In an other motivation, some authors have studied the models, in which, the white noise acts on  some special incidence functional responses, i.e., $f(s,i)\hookrightarrow f(s,i)+g(s,i)\dot B_3(t)$ and the stochastic equation becomes (see e.g., \cite{CAI})
\begin{equation*}
\begin{cases}
dS(t)=\big[a_1-b_1S(t)-I(t)f(S(t),I(t))\big]dt -I(t)g(S(t),I(t))dB_3(t),\\
dI(t)=\big[-b_2I(t) + I(t)f(S(t),I(t))\big]dt + I(t)g(S(t),I(t))dB_3(t).\\
\end{cases}
\end{equation*}
 By these motivations, the main aim of this paper is to generalize this problem by two ways. We study the stochastic SIR equation with more general incidence functional responses and in which, both natural death rates and the incidence rate are perturbed by white noises. Precisely, we consider the stochastic SIR equation as following
\begin{equation}\label{e2.1}
\begin{cases}
dS(t)=\big[a_1-b_1S(t)-I(t)f(S(t),I(t))\big]dt + \sigma_1 S(t) dB_1(t) -I(t)g(S(t),I(t))dB_3(t),\\
dI(t)=\big[-b_2I(t) + I(t)f(S(t),I(t))\big]dt + \sigma_2I(t) dB_2(t) + I(t)g(S(t),I(t))dB_3(t),
\end{cases}
\end{equation}
and provide a threshold number $R$ for the stochastic epidemic SIR model \eqref{e2.1} that has the same properties as the reproduction number $R_0$ in the deterministic case. This means that when $R<1$ the number of the infected individuals $I(t)$ tends to zero with the exponential rate while the number of the susceptible individuals $S(t)$ converges exponentially to the solution on the boundary. In case of $R>1$, the solution has a unique invariant measure concentrated on $\R_+^{2,\circ}$ and the transition probability converges to the invariant measure in total variation norm with a polynomial of any degree rate. The ergodic property is also obtained in this case. 

One of the main difficulties in studying this model is that  the comparison theorem \cite[Theorem 1.1, p.437]{IW} to compare the solution of \eqref{e2.1} with the solution on boundary as in \cite{DANG,NHU2} is no longer valid because there are complex white noises attended in the stochastic equation \eqref{e2.1}. Therefore, we can not approach the problem as usual and some new techniques must require here.

The rest of the paper is arranged as follows. Section \ref{secpre} provides some preliminary results about the system and introduce the threshold $R$ to determine the permanence and extinction of the system. In Section \ref{secext}, we derive the condition for the extinction of the system \eqref{e2.1}, which is equivalent to the case $R<1$ while section \ref{secper} focuses on the condition for permanence, corresponding to the case $R>1$. The last section is devoted to some numerical examples as well as discussing the obtained results in this paper.
\section {Preliminary results and the threshold $R$}\label{secpre}
Throughout of this paper, we assume that the incidence rate 
and  the diffusion term 
 satisfy the following conditions.
 \begin{asp}\label{conditionfg}\
Assume that
\begin{itemize}
\item $f,g$ are non-negative functions and $f(0,i)=0,  g(0,i)=0\;\forall i\geq 0$,
\item there exist   positive constants $F, G, K$ such that
\begin{equation*}
\begin{aligned}\abs{f(s_1,i_1)-f(s_2,i_2)}&\leq F (\abs{s_1-s_2}+\abs{i_1-i_2}),\\
\abs{g(s_1,i_1)-g(s_2,i_2)}&\leq G (\abs{s_1-s_2}+\abs{i_1-i_2}),\\
\abs{ig(s_1,i)-ig(s_2,i)}&\leq G\abs{s_1-s_2}\quad \text{and}\quad g(s,i)\leq K,
\end{aligned}
\end{equation*}
for all $s_1,s_2,i_1,i_2,s,i\geq 0$.
\end{itemize}
\end{asp}
We note that the bilinear incidence rate, the Beddington-DeAngelis incidence rate, the Holling type II functional response are  special cases of this general incidence function.
\subsection{The existence and uniqueness of the solution}
Let $(\Omega,\F,\{\F_t\}_{t\geq0},\PP)$ be a complete probability space with the filtration $\{\F_t\}_{t\geq0}$ satisfying the usual conditions and $B_1(t),B_2(t),$ $B_3(t)$ be mutually independent Brownian motions.
\begin{thm}
For any initial point $(u,v)\in \R_+^{2,\circ}:=\{(u',v')\in \R^2: u',v'>0\}$, there exists a unique global solution $(S_{u,v}(t), I_{u,v}(t)), t\geq0$ of \eqref{e2.1} with initial value $(S_{u,v}(0),I_{u,v}(0))=(u,v)$. Further, $(S_{u,v}(t),I_{u,v}(t))\in \R_+^{2,\circ}$ for all $t>0$.
\end{thm}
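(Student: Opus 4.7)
The plan is a standard Khasminskii-type argument based on a Lyapunov function that blows up at the boundary of $\R_+^{2,\circ}$ and at infinity. Under Assumption \ref{conditionfg}, the Lipschitz bounds on $f,g$ and the local boundedness of the $I$-factor make the coefficients of \eqref{e2.1} locally Lipschitz on $\R_+^{2,\circ}$, so standard SDE theory yields a unique maximal strong solution $(S(t),I(t))$ on a stochastic interval $[0,\tau_e)$, where $\tau_e\le\infty$ is the explosion time. Fix $k_0$ with $1/k_0<u\wedge v$ and $k_0>u\vee v$, and for integers $k\ge k_0$ introduce the stopping times $\tau_k=\inf\{t\in[0,\tau_e):\,S(t)\wedge I(t)\le 1/k \text{ or } S(t)\vee I(t)\ge k\}$. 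Since $(\tau_k)$ is non-decreasing and bounded above by $\tau_e$, it suffices to prove that $\tau_k\to\infty$ almost surely: this will force $\tau_e=\infty$ and keep the paths in $\R_+^{2,\circ}$ for all $t>0$.

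Next I would work with the Lyapunov function $V(s,i)=(s-1-\ln s)+(i-1-\ln i)$ on $\R_+^{2,\circ}$, which is non-negative and tends to $+\infty$ as $s$ or $i$ approaches $0$ or $+\infty$. Applying It\^o's formula and using the mutual independence of $B_1,B_2,B_3$, one computes $\op V(S,I)=a_1+b_1+b_2+\tfrac12(\sigma_1^2+\sigma_2^2)-b_1 S-b_2 I-\tfrac{a_1}{S}+\tfrac{If(S,I)}{S}-f(S,I)+\tfrac{I^2 g^2(S,I)}{2S^2}+\tfrac{g^2(S,I)}{2}$. Assumption \ref{conditionfg} is then used as follows: $f(0,\cdot)=0$ together with the Lipschitz estimate gives $f(S,I)\le FS$, hence $If/S\le FI$; the third inequality combined with $g(0,\cdot)=0$ yields $Ig(S,I)\le GS$, hence $I^2 g^2/S^2\le G^2$; and $g\le K$ gives $g^2\le K^2$. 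Dropping the non-positive terms $-a_1/S$ and $-f$ leaves an estimate of the form $\op V(S,I)\le C_1+(F-b_2)I$, and since $i\le A(i-1-\ln i)+B$ for suitable constants $A,B>0$, this closes to $\op V(S,I)\le M(1+V(S,I))$ for some $M>0$ depending only on the parameters.

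The rest is routine. Dynkin's formula on $[0,\tau_k\wedge T]$ combined with Gronwall's inequality gives $\E V(S(\tau_k\wedge T),I(\tau_k\wedge T))\le (V(u,v)+MT)e^{MT}$, a bound independent of $k$. On the event $\{\tau_k\le T\}$ at least one coordinate reaches $1/k$ or $k$, so $V(S(\tau_k),I(\tau_k))\ge\varphi(k)$ with $\varphi(k)\to\infty$, and Markov's inequality then forces $\PP(\tau_k\le T)\to 0$ for every $T>0$. Hence $\tau_k\to\infty$ a.s., which yields both the global existence and the positivity statement.

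The main obstacle is the term $\tfrac{I^2 g^2(S,I)}{2S^2}$ arising from the cross-noise $-Ig(S,I)dB_3$ in the $S$-equation, which a priori blows up at $S=0$ and has no counterpart in the purely-linear-noise models of \cite{DANG,NHU2}. Controlling it is precisely the role of the third inequality in Assumption \ref{conditionfg}: together with $g(0,i)=0$ it gives $Ig(S,I)\le GS$, so that $I^2 g^2/S^2$ is globally bounded by $G^2$ and the Lyapunov estimate closes. Once this is in hand, the proof is the standard Khasminskii scheme.
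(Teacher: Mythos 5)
Your proposal is correct and follows essentially the same route as the paper: a Khasminskii-type Lyapunov argument with a function of the form $(s-c-c\ln(s/c))+(i-1-\ln i)$, where the key point in both is that Assumption \ref{conditionfg} gives $f(s,i)\le Fs$ and $ig(s,i)\le Gs$, so the problematic cross-noise term $I^2g^2(S,I)/(2S^2)$ is bounded by $G^2/2$. The only cosmetic difference is that the paper tunes the constant to $C_1=b_2/F$ so that the linear term in $i$ cancels and $\Lom V$ is globally bounded (then cites \cite{LAH}), whereas you keep $C_1=1$ and close the estimate as $\Lom V\le M(1+V)$ via $i\le A(i-1-\ln i)+B$ followed by Gronwall; both are standard and valid.
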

\begin{proof}
It is noted that although we have assumed $f(s,i)$ is Lipschitz continuous, the coefficient $if(s,i)$ in the system \eqref{e2.1} is non-Lipschitz in general. Since the coefficients of the equation are locally Lipschitz continuous, there is a unique solution $(S_{u,v}(t),I_{u,v}(t))\in \R_+^{2,\circ}$ with the  initial value $(u,v)\in \R_+^{2,\circ}$, defined on maximal interval  $t\in [0,\tau_e)$. 
We need to show $\tau_e=\infty$ a.s.
Let us consider  the Lyapunov function $V: \R_+^{2}\rightarrow \R_+$ 
$$V(s,i)=\left(s-C_1-C_1\ln{\frac {s}{C_1}}\right)+(i-1-\ln i)\;\;\;\;\;\text{where}\;\;\;\;\;C_1:=\frac {b_2}F\cdot$$
By directly calculating the differential operator $\Lom V(s,i)$, we have
\begin{equation*}
\begin{aligned}
\Lom V(s,i)&=\left(1-\frac {C_1}{s}\right)\left(a_1-b_1s-if(s,i)\right)+ \frac {C_1}{2s^2}\left(\sigma_1^2s^2+i^2g^2(s,i)\right)
\\
&\;\;\;+\left(1-\frac 1{i}\right)\left(-b_2i+if(s,i)\right) +\frac 1{2i^2}\left(\sigma_2^2i^2+i^2g^2(s,i)\right).
\end{aligned}
\end{equation*}
It follows from the assumption \ref{conditionfg} that
$$f(s,i)=\abs{f(s,i)-f(0,i)}\leq Fs\;\;\;\;\;\text{and}\;\;\;\;\;ig(s,i)=\abs{ig(s,i)-ig(0,i)}\leq Gs.$$
Therefore, it is easily seen that
\begin{equation*}
\begin{aligned}
\Lom V(s,i)
&\leq C_2+\frac{C_1f(s,i)i}{s}+\frac{C_1 g^2(s,i)i^2}{2s^2}+\frac{g^2(s,i)}2-b_2i
\\& \leq C_2+ \frac{C_1 G^2}2+\frac{K^2}2 +(C_1 F-b_2)i
\\& = C_2+ \frac{C_1 G^2}2+\frac{K^2}2,\quad \text{where}\;C_2= a_1+C_1 b_1+\frac{C_1\sigma_1^2}2+b_2+\frac{\sigma_2^2}2.
\end{aligned}
\end{equation*}
As a result, $\Lom V(s,i)$ is bounded in $\R_+^{2,\circ}$. By using the same argument in the proofs in \cite[Theorem 2.1, p. 994]{LAH}, we complete the proof of the theorem.
\end{proof}
\subsection{Preliminary estimates about the expectation} 
Via  Lyapunov functions we estimate moments of $S_{u,v}(t),I_{u,v}(t)$ 
that are shown  in the following Lemma.
\begin{lem}\label{lem2.1}
 The following assertions hold:
\begin{itemize}
\item[{\rm(i)}] For any $0<p<\min\left\{\dfrac{2b_1}{\sigma_1^2},\dfrac{2b_2}{\sigma_2^2}\right\}$ and $\bar p>0$, there is a constants $Q_1$ such that
$$\limsup\limits_{t\to\infty}\E \left[(S_{u,v}(t)+ I_{u,v}(t))^{1+p}+(S_{u,v}(t)+ I_{u,v}(t))^{-\bar p}\right]\leq Q_1\;\; \forall (u,v) \in \R_+^2.$$
\item[{\rm(ii)}] For any $\eps>0$, $H>1$, $T>0$, there is $\bar H =\bar H(\eps, H, T)$ such that 
$$\PP \left\{\dfrac{1}{\bar H}  \leq S_{u,v}(t) 
 \leq \bar H \;\;\forall t \in [0,T] \right\}\geq 1-\eps \;\; \text{if}\;\;\; (u,v) \in [H^{-1},H]\times [0;H],$$
and
$$\PP\{ 0 \leq S_{u,v}(t), I_{u,v}(t) \leq \bar H \;\;\forall t \in [0,T] \}\geq 1-\eps \;\; \text{if}\;\;\; (u,v) \in [0,H]\times [0;H].$$
\end{itemize}
\end{lem}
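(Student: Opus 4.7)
My plan rests on the key observation that the $B_3$--contributions exactly cancel in the sum: writing $X(t):=S(t)+I(t)$, one finds
\begin{equation*}
dX(t)=[a_1-b_1 S(t)-b_2 I(t)]\,dt+\sigma_1 S(t)\,dB_1(t)+\sigma_2 I(t)\,dB_2(t),
\end{equation*}
so $X$ is driven by only two noises. This reduces every moment estimate for $X$ to a classical two--noise argument, and the pointwise bounds $S,I\leq X$ then yield control of each variable individually.

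For assertion (i), I would apply It\^o's formula to $X^{1+p}$ and use the elementary inequality $\sigma_1^2 s^2+\sigma_2^2 i^2\leq (s+i)(\sigma_1^2 s+\sigma_2^2 i)$ to obtain
\begin{equation*}
\Lom X^{1+p}\leq (1+p)X^p\Big[a_1+S\big(\tfrac{p\sigma_1^2}{2}-b_1\big)+I\big(\tfrac{p\sigma_2^2}{2}-b_2\big)\Big].
\end{equation*}
By the hypothesis $p<\min\{2b_1/\sigma_1^2,\,2b_2/\sigma_2^2\}$ both bracketed coefficients are strictly negative, so with $\alpha:=\min\{b_1-p\sigma_1^2/2,\,b_2-p\sigma_2^2/2\}>0$ this simplifies to $\Lom X^{1+p}\leq (1+p)a_1 X^p-(1+p)\alpha X^{1+p}$, and Young's inequality absorbs the first term into the second to give $\Lom X^{1+p}\leq -c_p X^{1+p}+C_p$. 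For the negative moment, It\^o on $X^{-\bar p}$ produces a dominant $-\bar p a_1 X^{-\bar p-1}$ together with an $O(X^{-\bar p})$ correction; the former wins as $X\to 0$ and the latter vanishes as $X\to\infty$, yielding $\Lom X^{-\bar p}\leq -\tilde c X^{-\bar p}+\tilde C$. The desired uniform moment bounds follow from the standard comparison applied to the scalar ODE satisfied by $\E X^{\pm}(t)$, after a routine localization by stopping times to guarantee the martingale property.

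For assertion (ii), the argument is a stopping-time/Markov inequality. Set $\tau_{\bar H}^{(1)}:=\inf\{t\geq 0:X(t)>\bar H\}$. Optional stopping applied to $X^{1+p}$ with the drift bound above gives $\E X^{1+p}(T\wedge\tau_{\bar H}^{(1)})\leq (u+v)^{1+p}+C_p T$, while on $\{\tau_{\bar H}^{(1)}\leq T\}$ we have $X(T\wedge\tau_{\bar H}^{(1)})=\bar H$; hence
\begin{equation*}
\PP\Big(\sup_{t\leq T} X(t)>\bar H\Big)\leq \bar H^{-(1+p)}\big((2H)^{1+p}+C_p T\big),
\end{equation*}
which is $<\eps$ for $\bar H$ sufficiently large. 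Since $S,I\leq X$, this establishes the second assertion of (ii). For the first assertion we additionally need a lower bound on $S$. On $\{t<\tau_{\bar H}^{(1)}\}$ one has $I(t)\leq\bar H$, and inserting the estimates $If(S,I)\leq FSI$ and $Ig(S,I)\leq GS$ into the It\^o computation for $S^{-\bar p}$ yields
\begin{equation*}
\Lom S^{-\bar p}\leq -\bar p a_1 S^{-\bar p-1}+\Big(\bar p b_1+\bar p F\bar H+\tfrac{\bar p(\bar p+1)}{2}(\sigma_1^2+G^2)\Big) S^{-\bar p},
\end{equation*}
whose right--hand side is bounded above by a constant $C(\bar H)$ (again because the leading negative term dominates for small $S$). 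With $\tau_{\bar H'}^{(2)}:=\inf\{t:S(t)<1/\bar H'\}$ the same Markov argument at $T\wedge\tau_{\bar H}^{(1)}\wedge\tau_{\bar H'}^{(2)}$ gives $\PP(\tau_{\bar H'}^{(2)}\leq T\wedge\tau_{\bar H}^{(1)})\leq (\bar H')^{-\bar p}\big(H^{\bar p}+C(\bar H)T\big)$, which is $<\eps/2$ for $\bar H'$ large enough. The upper bound on $S$ in the first assertion comes for free from $S\leq X\leq \bar H$.

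The main obstacle is the lower bound on $S$. Because $B_3$ couples $S$ and $I$, the usual comparison with a boundary equation is unavailable, as the authors emphasize, and the direct It\^o formula for $S^{-\bar p}$ contains an $FI$--coefficient that is a priori unbounded. The essential idea is the bootstrap just described: first control $X$ (hence $I$) uniformly on $[0,T]$ using the cancellation of $B_3$, and only then work on that event to estimate $S$ from below. Once this bootstrap is in place, everything else reduces to routine It\^o calculus and Gronwall--type estimates.
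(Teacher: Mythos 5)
Your proof is correct, and for part (i) it is essentially the paper's argument: the authors work with the single Lyapunov function $V_1(s,i)=(s+i)^{1+p}+(s+i)^{-\bar p}$, whose generator likewise contains no $g$-terms precisely because of the $B_3$-cancellation you make explicit, and they conclude via $\Lom V_1\leq C_4-C_3V_1$ and a standard moment theorem, just as you do. The genuine divergence is in the lower bound on $S$ in part (ii). You run a bootstrap (first confine $I$ below $\bar H$ up to a stopping time, then estimate $S$ from below on that event) that is structurally identical to the paper's, but your tool is the negative-moment function $S^{-\bar p}$: the bounds $f(s,i)\leq Fs$ and $ig(s,i)\leq Gs$ kill the singular factors, optional stopping at $T\wedge\tau^{(1)}_{\bar H}\wedge\tau^{(2)}_{\bar H'}$ plus Markov's inequality then gives $\PP(\inf_{t\leq T}S(t)<1/\bar H')$ small. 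The paper instead applies It\^{o}'s formula to $\ln S_{u,v}(t)$ and controls the martingale part $-\sigma_1B_1(t)+\int_0^t I g/S\,dB_3$ by the exponential martingale inequality, obtaining a pathwise lower bound $\ln S_{u,v}(t)\geq \ln h_3^{-1}$ on the intersection of the good events. Your route is somewhat more elementary (only Dynkin and Chebyshev, no exponential martingale inequality) and gives a clean quantitative tail in $\bar H'$; the paper's route yields an explicit almost-sure lower bound on the whole trajectory on a prescribed event, which is the form reused later in Propositions \ref{lem2.2} and \ref{lm3.1}. Both are valid proofs of the stated lemma.
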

\begin{proof}
Consider Lyapunov function
$V_1(s,i)=(s+i)^{1+p}+(s+i)^{-\bar p}
.$ By directly calculating the differential operator $\Lom V_1(s,i)$, we obtain
\begin{align*}
\Lom V_1(s,i)=&(1+p)(s+i)^p(a_1-b_1s-b_2i)+\frac {p(1+p)}2(s+i)^{p-1}\Big( \sigma_1^2s^2+\sigma_2^2i^2\Big)\\
&-\bar p(s+i)^{-\bar p-1}(a_1-b_1s-b_2i)+\frac {\bar p(1+\bar p)}2(s+i)^{-\bar p-2}\Big( \sigma_1^2s^2+\sigma_2^2i^2\Big)\\
\leq &(1+p)a_1(s+i)^p-(1+p)(s+i)^{p-1}\Big[ (b_1-\dfrac p2 \sigma_1^2)s^2+(b_2-\dfrac p2 \sigma_2^2)i^2+ (b_1+b_2)si\big] \\
&-\bar pa_1(s+i)^{-\bar p-1}+\bar p\left[\max\{b_1,b_2\}+\frac {1+\bar p}2\max\{\sigma_1^2,\sigma_2^2\}\right](s+i)^{-\bar p}.
\end{align*}
Let 
$0<C_3<(1+p)\min\Big\{b_1-\dfrac p2 \sigma_1^2,b_2-\dfrac p2 \sigma_2^2, \dfrac{b_1+b_2}2\Big\}$. By some standard calculations, we get
$$C_4=\sup_{(s,i)\in \R^2_+\setminus \{(0,0)\}}\left\{\Lom V_1(s,i)+C_3V_1(s,i)\right\}<\infty.$$
That means
\begin{equation}\label{10}
\Lom V_1(s,i) \leq C_4-C_3 V_1(s,i).
\end{equation}
Applying \cite[Theorem 5.2, p.157]{MAO} obtains the part (i) of  Lemma.

Now, we move to the proof of the part (ii). By \eqref{10}, 
 there exist $h_1$,$h_2>0$ (see \cite[Lemma 2.1, p. 45]{DA}) such that for all $(u,v)\in [0,H]\times [0,H]$
\begin{equation*}
\mathbb{P}  \Big\{0 \leq S_{u,v} (t) \leq h_1 \;\forall t\in [0,T] \Big\} \geq 1-\frac\varepsilon2\;\text{and}\;\mathbb{P}  \Big\{0 \leq I_{u,v} (t) \leq h_2 \;\forall t\in [0,T] \Big\} \geq 1-\dfrac{\varepsilon}{2}.
\end{equation*}
Let 
$$\Omega'_1=\Big\{0 \leq I_{u,v} (t) \leq h_2\;\forall t\in [0,T]\Big\},\;\;\Omega'_1\;\text{may depend on}\;(u,v).$$
 By exponential martingale inequality \cite[Theorem 7.4, p. 44]{MAO} we have $\PP(\Omega'_2)\geq 1-\dfrac{\eps}{2}$, where 
\begin{align*}
\Omega'_2&=\left\{ -\sigma_1 B_1(t) + \int_0^t\dfrac{I_{u,v}(s)g(S_{u,v}(s),I_{u,v}(s))}{S_{u,v}(s)}dB_3(s)\right.\\
 &\hskip 0.5cm\leq \left.\dfrac{\sigma_1^2 t}{2}+\dfrac12\int_0^t\dfrac{I^2_{u,v}(s)g^2(S_{u,v}(s),I_{u,v}(s))}{S^2_{u,v}(s)}ds + \ln \dfrac{2}{\eps}\;\;\forall t \geq 0\right\},\;\;\Omega'_2\;\text{may depend on}\;(u,v).
\end{align*}
Applying  It\^{o}'s formula to the equation \eqref{e2.1} yields that
\begin{multline}\label{lns}
\ln S_{u,v}(t)=\ln u +\int_0^t \dfrac{a_1}{S_{u,v}(s)}ds -\left(b_1+\dfrac{\sigma_1^2}{2}\right) t-\int_0^t \dfrac{I_{u,v}(s)f(S_{u,v}(s),I_{u,v}(s))}{S_{u,v}(s)}ds 
\\\;\;\;- \dfrac12\int_0^t \dfrac{I^2_{u,v}(s)g^2(S_{u,v}(s),I_{u,v}(s))}{S^2_{u,v}(s)}ds  +\sigma_1B_1(t) - \int_0^t\dfrac{I_{u,v}(s)g(S_{u,v}(s),I_{u,v}(s))}{S_{u,v}(s)}dB_3(s).
 \end{multline}
For all $(u,v)\in [H^{-1},H]\times[0,H]$, $\omega\in \Omega'_1\cap\Omega'_2$ and $t \in [0,T]$, by the assumption \ref{conditionfg} we have
\begin{equation*}
\begin{aligned}
\ln S_{u,v}(t)&\geq \ln u -(b_1+\sigma_1^2) t-F\int_0^t I_{u,v}(s)ds -G^2\int_0^t I^2_{u,v}(s)ds-\ln \dfrac 2{\eps}
\\&\geq \ln H^{-1}-(b_1+\sigma_1^2)T-h_2FT-h_2G^2T-\ln \dfrac 2{\eps}:=\ln h_3^{-1}
\end{aligned}
\end{equation*}
By setting $\bar H=\max\{1, h_1, h_2, h_3\}$ we complete the proof.
\end{proof}

Moreover, we note that  $I_{u,v}(t)=0\;\forall t\geq 0$\a.s provided $I_{u,v}(0)=0$.
Further, it follows \cite[Theorem 2.9.3]{MAO} and \cite[Section 2.5]{YIN} that the solution of \eqref{e2.1} is homogeneous strong Markov and Feller process if provided that the coefficients are global Lipschitz.
Therefore, by using the results in part (ii) of Lemma \ref{lem2.1}, we obtain from the local Lipschitz property of coefficients of \eqref{e2.1} and a truncated argument that $(S_{u,v}(t),I_{u,v}(t))$ is homogeneous strong Markov and Feller process.
The details of this truncated argument and this result can be found in \cite[Theorem 5.1]{NYZ17}.
\subsection{The threshold $R$}
Consider the equation on boundary when the infected individuals are absent, i.e.,
\begin{equation}\label{phi}
d \varphi(t)=\big(a_1-b_1\varphi(t)\big)dt+\sigma_1 \varphi(t)dB_1(t),\;\;\;\varphi(0)\geq 0.
\end{equation}
We write $\varphi_u(t)$ for the solution of the equation \eqref{phi} with the initial condition $\varphi(0)=u$. 
By solving the Fokker-Planck equation,  the equation \eqref{phi}  has
a unique stationary distribution with density $f^*$ given by
\begin{equation}\label{hpp}
f^*(x)=\dfrac{b^a}{\Gamma(a)}x^{-(a+1)}e^{-\frac{b}x},\; x>0,
\end{equation}
where $c_1=b_1+\dfrac{\sigma_1^2}{2}, a=\dfrac{2c_1}{\sigma_1^2},b=\dfrac{2a_1}{\sigma_1^2} $ and
$\Gamma(\cdot) $ is Gamma function. Our idea is to determine whether $I_{u,v}(t)$ converges to 0 or not by considering the Lyapunov exponent $\limsup_{t\to\infty} \dfrac{\ln I_{u,v}(t)}t$ when $I_{u,v}(t)$ is small. Using It\^{o}'s formula gets
\begin{equation}\label{Lia}
\begin{aligned}
\dfrac{\ln I_{u,v}(t)}{t}=&\dfrac{\ln v}{t}+ \dfrac {\sigma_2B_2(t)}t+\dfrac 1{t}\int_0^tg(S_{u,v}(s),I_{u,v}(s))dB_3(s)
\\&-c_2+\dfrac 1{t}\int_0^t\Big(f(S_{u,v}(s),I_{u,v}(s))-\dfrac 12 g^2(S_{u,v}(s),I_{u,v}(s))\Big)ds,
\end{aligned}
\end{equation}
where $c_2=b_2+\dfrac{\sigma_2^2}2.$
Intuitively, $\limsup_{t\to\infty} \frac{\ln I_{u,v}(t)}t<0$ implies $\lim_{t\to\infty} I_{u,v}(t)=0$ and when $I_{u,v}(t)$ is small then $S_{u,v}(t)$ is close to $\varphi_u(t)$ and therefore, when $t$ is sufficiently large we have
\begin{equation*}
\begin{aligned}
&\dfrac 1{t}\int_0^t\Big(f(S_{u,v}(s),I_{u,v}(s))-\dfrac 12 g^2(S_{u,v}(s),I_{u,v}(s))\Big)ds
\approx \dfrac 1{t}\int_0^t\Big(f(\varphi_{u}(s),0)-\dfrac 12 g^2(\varphi_u(s),0)\Big)ds.
\end{aligned}
\end{equation*}
By boundedness of $g(\cdot,\cdot)$;  strong law of large numbers \cite[Theorem 3.16, p.46]{SKO} for $\varphi_{u}(t),$ from \eqref{Lia} we obtain that the Lyapunov exponent of $I_{u,v}(t)$ is approximated to
\begin{equation}\label{ld}
-c_2+ \int_0^{\infty}\Big(f(x,0) - \dfrac 12 g^2(x,0) \Big)f^*(x)dx:=\lambda.
\end{equation}
Roughly speaking, if $\lambda >0$, whenever $I_{u,v}(t)$ is enough small, $\limsup_{t\to\infty}\frac{\ln I_{u,v}(t)}{t}\approx \lambda>0$ and it leads to $I_{u,v}(t)$ can not be very small in a long time. Conversely, when $\lambda <0$, if the solution starts from a initial point $(u,v)$, where v is sufficiently small then $\limsup_{t\to\infty}\frac{\ln I_{u,v}(t)}{t}\approx \lambda<0$ and which implies $I_{u,v}(t)\to 0$. Therefore, the remaining work is to investigate how the solution enters the region $\{(u,v): v\;\text{is sufficient small}\}$. However, that is just in intuition, the detailed proofs are very technical and complex and need to be carefully done.
\begin{rem}
 For $\lambda$ defined as in \eqref{ld}, $\lambda<0$ is equivalent to 
\begin{equation}\label{r}
R:=\dfrac{\int_0^{\infty}f(x,0)f^*(x)dx}{c_2+\int_0^{\infty}\frac 12 g^2(x,0)f^*(x)dx}<1.
\end{equation}
Therefore, we expect $R$ to be a threshold between the persistence and extinction of \eqref{e2.1} as in the deterministic case. 
It is noted that since the conditions on $f(s,i)$, it is easy to see that $f(s,i)\leq Fs$ and thus, $$\int_0^{\infty}\Big(f(x,0) - \dfrac 12 g^2(x,0) \Big)f^*(x)dx\leq F\int_0^{\infty}xf^*(x)dx=\dfrac{Fa_1}{b_1}.$$
As a result, $\lambda, R$ are well-defined.
\end{rem}
\section{Extinction}\label{secext}
Consider the case  
$R<1$ or equivalently, $\lambda<0$. 
We shall show that the number of the infected individuals $I_{u,v}(t)$ tends to zero with the exponential rate while the number of the susceptible individuals $S_{u,v}(t)$ converges to $\varphi_u(t)$.  
The problem here is that  we can not apply the comparison theorem \cite[Theorem 1.1, p.437]{IW} for $S_{u,v}(t)$ and $\varphi_u(t)$ to use  a similar argument as in \cite{DANG,NHU2}. 
\begin{thm}\label{R<1}
Assume that $\lambda<0$. We also assume that the function $f(s,0)-\frac 12 g^2(s,0)$ is monotonic. Then for any initial point $(u,v)\in\mathbb{R}_+^{2,\circ}$, 
the number of the infected individuals $I_{u,v}(t)$ tends to zero  with the exponential rate $\lambda$ and the susceptible class $S_{u,v}(t)$ converges exponentially to the solution on boundary $\varphi_u(t)$. Precisely, 
$$\limsup\limits_{t\to\infty}\dfrac{\ln I_{u,v}(t)}{t}=\lambda\a.s$$
and
$$\limsup\limits_{t\to\infty}\dfrac{\ln \abs{S_{u,v}(t)-\varphi_u(t)}}{t}\leq \max\{\lambda,-c_1\}\a.s$$

\end{thm}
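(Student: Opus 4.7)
The plan is to analyze the It\^o expansion \eqref{Lia} for $\ln I_{u,v}(t)/t$. First, the martingale contributions vanish a.s.: $\sigma_2 B_2(t)/t\to 0$ by the SLLN for Brownian motion, and $t^{-1}\!\int_0^t g(S_{u,v},I_{u,v})\,dB_3(s)\to 0$ by the martingale SLLN, using that the quadratic variation $\int_0^t g^2\,ds\leq K^2 t$ grows only linearly. So $\ln I_{u,v}(t)/t$ is asymptotically equivalent to $-c_2+t^{-1}\!\int_0^t\bigl[f(S,I)-\tfrac12 g^2(S,I)\bigr]ds$, and the task reduces to showing this time average converges a.s.\ to $c_2+\lambda$.

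Write $h(x):=f(x,0)-\tfrac12 g^2(x,0)$. The Lipschitz hypothesis combined with $g\leq K$ gives $\bigl|[f(S,I)-\tfrac12 g^2(S,I)]-h(S)\bigr|\leq(F+KG)I$ and $|h(S)-h(\varphi_u)|\leq(F+KG)|S-\varphi_u|$, so up to two error terms we may substitute $h(\varphi_u)$ for the original integrand. Since $\varphi_u$ is a one-dimensional diffusion with unique invariant density $f^*$ from \eqref{hpp}, the ergodic theorem yields $t^{-1}\!\int_0^t h(\varphi_u(s))\,ds\to\int_0^\infty h(x)f^*(x)\,dx=c_2+\lambda$ a.s. Everything is therefore finished once the two error terms are controlled and in fact shown to decay exponentially.

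For the error involving $|S-\varphi_u|$, I would subtract the two SDEs and apply variation of constants with the fundamental solution $\Phi(t):=\exp(-c_1 t+\sigma_1 B_1(t))$; since $B_3$ is independent of $B_1$ there is no It\^o cross-correction, and one obtains
\[
S_{u,v}(t)-\varphi_u(t)=-\Phi(t)\!\int_0^t\!\Phi(s)^{-1}I(s)f(S,I)\,ds-\Phi(t)\!\int_0^t\!\Phi(s)^{-1}I(s)g(S,I)\,dB_3(s).
\]
Both integrands are proportional to $I$ (using $Ig\leq KI$ and $If\leq FIS$ with $S$ of bounded moments by Lemma \ref{lem2.1}). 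A deterministic convolution estimate together with the BDG inequality for the stochastic integral then shows: if $I$ decays exponentially at rate $\alpha<0$ and $\Phi$ at rate $-c_1$, the right-hand side decays at rate $\max(\alpha,-c_1)$, which with $\alpha=\lambda$ is precisely the second assertion of the theorem.

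The main obstacle is the circularity between the two claims: exponential decay of $I$ needs the replacement of $h(S)$ by $h(\varphi_u)$ in the ergodic average, which by the display above requires $I$ already small. The monotonicity hypothesis on $h$ is precisely where one pays for the absence of a pathwise comparison between $S_{u,v}$ and $\varphi_u$. I would use it to first obtain the one-sided bound $\limsup \ln I/t\leq\lambda$ by combining the mean inequality $\E[S_{u,v}(t)]\leq\E[\varphi_u(t)]$ (which follows from the non-positive extra drift $-If$ in $dS$) with the monotonicity of $h$ to control $t^{-1}\!\int_0^t h(S)\,ds$ from above in the limit. This already forces $I\to 0$ at the asserted exponential rate; feeding this back into the variation-of-constants display gives $|S-\varphi_u|\to 0$ exponentially, and substituting this refined estimate into \eqref{Lia} produces the matching lower bound $\limsup \ln I/t\geq\lambda$ via a reverse Fatou argument, completing the proof.
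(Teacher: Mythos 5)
Your reduction of the problem---splitting the drift in \eqref{Lia} into $h(\varphi_u)$ plus two error terms of size $(F+KG)I$ and $(F+KG)\abs{S-\varphi_u}$, the ergodic theorem for $\varphi_u$, and the variation-of-constants formula for $S-\varphi_u$---is exactly the skeleton of the paper's argument, and your sketch for the second assertion (convolution estimate giving rate $\max\{\lambda,-c_1\}$) also matches. The gap is in the step you use to break the circularity. The inequality $\E[S_{u,v}(t)]\le\E[\varphi_u(t)]$ is an ordering of means only; combined with monotonicity of $h$ it tells you nothing about $\E[h(S_{u,v}(t))]$ versus $\E[h(\varphi_u(t))]$ --- for that you would need stochastic domination $S_{u,v}(t)\preceq_{\mathrm{st}}\varphi_u(t)$, i.e.\ a pathwise or coupling comparison, which is precisely what the extra noise terms destroy (the paper states explicitly that the Ikeda--Watanabe comparison theorem fails here, and Figure \ref{comparison} shows $S$ crossing $\varphi$). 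Even if you did have $\E[h(S(t))]\le\E[h(\varphi_u(t))]$, that only bounds $\E\bigl[t^{-1}\int_0^t h(S)\,ds\bigr]$, and Fatou runs in the wrong direction to convert a bound on expectations into the almost-sure bound $\limsup_t t^{-1}\int_0^t h(S)\,ds\le c_2+\lambda$ that \eqref{Lia} requires. So the one-sided estimate $\limsup\ln I/t\le\lambda$ is not established, and everything downstream of it collapses.

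The paper closes the loop by a different mechanism: it works locally, starting from $v\le\hat\delta$ small, and introduces the stopping times $\tau^\eta_{u,v}$ (first time $I$ exceeds $\eta$) and $\xi^\nu_{u,v}$ (first time $\abs{S-\varphi_u}$ exceeds $\nu$). Up to $\zeta_{u,v}=\tau^\eta_{u,v}\wedge\xi^\nu_{u,v}$ both error terms are small by construction, which yields the exponential decay \eqref{I} of $I$; feeding \eqref{I} back into the variation-of-constants formula (with the iterated-logarithm Lemma \ref{br} controlling the stochastic convolution) shows that $\abs{S-\varphi_u}$ stays strictly below $\nu$ and $I$ strictly below $\eta$, forcing $\zeta_{u,v}=\infty$ on an event of probability at least $1-8\eps$. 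Note also that the monotonicity of $h$ is used there for a purpose different from yours: to replace $\varphi_u$ by $\varphi_H$ in the ergodic average (the one-dimensional comparison theorem \emph{is} valid for the boundary equation), making the convergence uniform over $u\in[H^{-1},H]$. Finally, the restriction to small $v$ is removed by a transience argument combined with the Markov property---a globalization step your proposal does not address. To rescue your outline you would need to replace the mean-inequality step with a stopping-time bootstrap of this kind.
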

In order to prove Theorem \ref{R<1}, we need some following auxiliary results.
\begin{prop}\label{lem2.2}
For any $T,H>1,$ $\eps >0, \theta >0$, there is a $\delta =\delta (H,T,\eps,\theta)$ such that
$$\PP\{ \tau_{u,v}^{\theta} \geq T \}\geq 1-\eps\;\;\forall \; (u,v) \in [0,H]\times (0,\delta],$$
where $\tau_{u,v}^{\theta}=\inf\{t\geq 0: I_{u,v}(t)> \theta \}.$
\end{prop}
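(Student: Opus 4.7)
The plan is to show that $\ln I_{u,v}(t) - \ln v$ stays uniformly bounded on $[0,T]$ with probability at least $1-\eps$, so that choosing the initial value $v$ small enough forces $I_{u,v}(t)$ to remain below $\theta$ throughout $[0,T]$. Applying It\^o's formula to $\ln I_{u,v}(t)$ exactly as in \eqref{Lia} gives
\[
\ln I_{u,v}(t) - \ln v = -c_2 t + \int_0^t\!\Bigl[f(S_{u,v},I_{u,v}) - \tfrac12 g^2(S_{u,v},I_{u,v})\Bigr]ds + \sigma_2 B_2(t) + \int_0^t\! g(S_{u,v},I_{u,v})\,dB_3(s),
\]
so it suffices to control the drift and the martingale separately.

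For the drift, I invoke Lemma \ref{lem2.1}(ii) with the given $H,T$ and with tolerance $\eps/2$ to obtain $\bar H=\bar H(H,T,\eps)$ and an event $\Omega_1$ with $\PP(\Omega_1)\geq 1-\eps/2$ on which $S_{u,v}(t)\leq \bar H$ for all $t\in[0,T]$, for every $(u,v)\in[0,H]\times[0,H]$. Since Assumption \ref{conditionfg} yields $f(s,i)\leq Fs$ and $g^2\geq 0$, on $\Omega_1$ the drift integral is bounded above by $F\int_0^t S_{u,v}(s)\,ds\leq F\bar H\,T$. For the martingale part $M(t):=\sigma_2 B_2(t)+\int_0^t g(S_{u,v},I_{u,v})\,dB_3(s)$, the bound $g\leq K$ gives $\langle M\rangle_T\leq(\sigma_2^2+K^2)T$, so either Doob's $L^2$ maximal inequality or the exponential martingale inequality \cite[Theorem 7.4, p.~44]{MAO} yields $M_0=M_0(T,\eps,K,\sigma_2)$ and an event $\Omega_2$ with $\PP(\Omega_2)\geq 1-\eps/2$ on which $\sup_{t\in[0,T]}|M(t)|\leq M_0$.

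Combining these on $\Omega_1\cap\Omega_2$, whose probability is at least $1-\eps$, I obtain $\ln I_{u,v}(t)\leq \ln v+C$ for all $t\in[0,T]$, where $C:=M_0+F\bar H\,T$ depends only on $H,T,\eps$. Setting
\[
\delta := \tfrac12\,\theta\,e^{-C}
\]
ensures that whenever $0<v\leq\delta$ we have $I_{u,v}(t)\leq \tfrac{\theta}{2}<\theta$ on $\Omega_1\cap\Omega_2$ for every $t\in[0,T]$, hence $\tau_{u,v}^\theta\geq T$ on that event. Since the resulting $\delta$ depends only on $H,T,\eps,\theta$ and not on $(u,v)$, the uniform statement follows. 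No serious obstacle is anticipated; the mild technical point is that $v>0$ is needed to apply It\^o's formula to $\ln I_{u,v}$, which is guaranteed by the forward invariance of $\R_+^{2,\circ}$ established in the existence theorem.
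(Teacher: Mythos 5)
Your proposal is correct and follows essentially the same route as the paper's proof: It\^o's formula applied to $\ln I_{u,v}$, the a priori bound $S_{u,v}(t)\le \bar H$ on $[0,T]$ from Lemma \ref{lem2.1}(ii) to control the drift via $f(s,i)\le Fs$, a uniform bound on the martingale part (the paper uses the exponential martingale inequality, which is one of the two options you allow), and then a choice of $\delta$ small enough that $\ln v + C < \ln\theta$. The only cosmetic point is that you should also take $\delta\le H$ so that Lemma \ref{lem2.1}(ii) applies to all initial data $(u,v)\in[0,H]\times(0,\delta]$, exactly as the paper does by requiring $\delta<H$.
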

\begin{proof}
By exponential martingale inequality \cite[Theorem 7.4, p. 44]{MAO}, we have $\PP(\Omega''_1)\geq 1-\dfrac{\eps}{2}$, where
$$\Omega''_1=\left\{ \sigma_2 B_2(t) + \int_0^tg(S_{u,v}(s),I_{u,v}(s))dB_3(s) \leq \dfrac{\sigma_2^2t}{2}+\dfrac12\int_0^tg^2(S_{u,v}(s),I_{u,v}(s))ds + \ln \dfrac{2}{\eps}\;\forall t\geq 0\right\},$$
$\Omega''_1$ may depend on $(u,v)$. In view of part (ii) Lemma \ref{lem2.1}, there exists $\bar H=\bar H(T,H,\eps)$ such that $\PP(\Omega''_2)\geq 1-\dfrac{\eps}{2}$, where
$$\Omega''_2=\{ 0 \leq S_{u,v}(t),I_{u,v}(t) \leq \bar H \;\;\forall t \in [0,T] \},\;\;\Omega''_2\;\text{may depend on}\;(u,v).$$
Applying  It\^{o}'s formula to the equation \eqref{e2.1} implies that
\begin{multline}\label{lni}
\ln I_{u,v}(t)=\ln v -c_2t+\int_0^tf(S_{u,v}(s),I_{u,v}(s))ds - \dfrac12\int_0^tg^2(S_{u,v}(s),I_{u,v}(s))ds\\ 
+\sigma_2B_2(t) + \int_0^tg(S_{u,v}(s),I_{u,v}(s))dB_3(s).
 \end{multline}
Therefore, for any $(u,v) \in [0,H]\times (0,H]$ and $\omega \in \Omega''_1 \cap \Omega''_2$ we have
$$\ln I_{u,v}(t) < \ln v - b_2t+t\big(2F\bar H+f(\bar H, \bar H)\big) + \ln \dfrac{2}{\eps}\;\;\forall t\in [0,T]. $$
Hence, we can choose a sufficiently small $\delta =\delta (H,T,\eps,\theta)<H$  so that for all $(u,v)\in [0,H]\times (0,\delta]$ and $0\leq t\leq T$, $\ln I_{u,v}(t)<\ln\theta$. The proof is complete.
\end{proof}
\begin{prop}\label{extinction}
For any $H,T>1$, $\eps, \nu>0$, there exists $\theta>0$ such that for all $(u,v) \in [0,H]\times(0,\theta],$
$$\PP\big\{\abs{S_{u,v}(t)-\varphi_u(t)}\leq \nu\;\;\forall\; 0\leq t \leq T\wedge \tau_{u,v}^{\theta}\big\}\geq 1-\eps.$$
\end{prop}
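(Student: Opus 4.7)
The plan is to study the difference $Y_{u,v}(t):=S_{u,v}(t)-\varphi_u(t)$, which satisfies a linear SDE whose forcing is proportional to $I_{u,v}$, and to exploit $I_{u,v}\leq \theta$ on $[0,T\wedge \tau_{u,v}^{\theta}]$ to make this forcing uniformly small. Subtracting \eqref{phi} from the first equation of \eqref{e2.1} and noting $Y_{u,v}(0)=0$ gives
\begin{equation*}
dY_{u,v}(t)=-b_1Y_{u,v}(t)\,dt+\sigma_1Y_{u,v}(t)\,dB_1(t)-I_{u,v}(t)f(S_{u,v},I_{u,v})\,dt-I_{u,v}(t)g(S_{u,v},I_{u,v})\,dB_3(t).
\end{equation*}
Let $\Phi(t):=\exp\bigl(-(b_1+\sigma_1^2/2)t+\sigma_1B_1(t)\bigr)$ be the fundamental solution of the homogeneous part. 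Since $B_1$ and $B_3$ are independent, the product rule applied to $\Phi^{-1}Y_{u,v}$ produces the variation-of-constants representation
\begin{equation*}
Y_{u,v}(t)=-\Phi(t)\int_0^t\Phi^{-1}(s)I_{u,v}(s)f(S_{u,v},I_{u,v})\,ds-\Phi(t)\int_0^t\Phi^{-1}(s)I_{u,v}(s)g(S_{u,v},I_{u,v})\,dB_3(s).
\end{equation*}

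Next I would assemble three high-probability events. First, applying Lemma \ref{lem2.1}(ii) with $\eps/3$ in place of $\eps$ yields $\bar H=\bar H(\eps,H,T)$ and an event $A_1$ with $\PP(A_1)\geq 1-\eps/3$ on which $S_{u,v}(t),I_{u,v}(t)\leq \bar H$ for all $t\in [0,T]$. Second, the a.s.\ continuity and finiteness of $\Phi$ and $\Phi^{-1}$ on $[0,T]$ let me pick a deterministic $M=M(T,\eps)$ such that $A_2:=\{\sup_{[0,T]}\Phi\leq M,\;\sup_{[0,T]}\Phi^{-1}\leq M\}$ has probability at least $1-\eps/3$. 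Assumption \ref{conditionfg} gives $|f(s,i)|\leq F(s+i)$ and $g\leq K$; consequently, on $A_1\cap A_2$ and for $t\in[0,T\wedge\tau_{u,v}^\theta]$ the Riemann part of $Y_{u,v}$ is dominated by
\begin{equation*}
\Bigl|\Phi(t)\int_0^t\Phi^{-1}(s)I_{u,v}(s)f(S_{u,v},I_{u,v})\,ds\Bigr|\leq 2F\bar H\,M^2T\,\theta.
\end{equation*}

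For the stochastic integral I would introduce the stopped martingale $\tilde M(t):=\int_0^t\mathbf{1}_{\{s\leq\tau_{u,v}^\theta\}}\Phi^{-1}(s)I_{u,v}(s)g(S_{u,v},I_{u,v})\,dB_3(s)$, whose quadratic variation is dominated by $K^2\theta^2\int_0^T\Phi^{-2}(s)\,ds$. Since $\Phi^{-1}$ is a geometric Brownian motion with moments of every order, the constant $C:=4K^2\E\int_0^T\Phi^{-2}(s)\,ds$ is finite, and Doob's $L^2$ inequality yields $\PP\bigl(\sup_{[0,T]}|\tilde M|>\eta\bigr)\leq C\theta^2/\eta^2$ for every $\eta>0$. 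Taking $\eta:=\nu/(2M)$ and then $\theta>0$ so small that $2F\bar H M^2T\theta\leq \nu/2$ and simultaneously $C\theta^2/\eta^2\leq \eps/3$, on the intersection of $A_1$, $A_2$, and $\{\sup_{[0,T]}|\tilde M|\leq \eta\}$ we obtain $\sup_{t\in [0,T\wedge\tau_{u,v}^\theta]}|Y_{u,v}(t)|\leq \nu$, and the three excluded events together contribute at most $\eps$. The main obstacle lies precisely in this last step: the factor $\Phi(t)$ sits \emph{outside} an It\^o integral whose integrand carries the unbounded random factor $\Phi^{-1}$, so neither piece is deterministically bounded, and one must truncate $\sup\Phi$ on a high-probability event before invoking Doob's inequality on the stopped stochastic integral.
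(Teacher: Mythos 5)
Your argument is correct, but it follows a genuinely different route from the paper's. The paper's proof is a two-line affair: it bounds $S_{u,v}$ and $\varphi_u$ by some $\bar H$ on $[0,T]$ via Lemma \ref{lem2.1}(ii), then invokes a Gronwall-type second-moment estimate from \cite[Lemma 3.2]{DANG1} (or \cite[Lemma 6.9]{MAO}) giving $\E\sup_{t\le T\wedge\tau_{u,v}^{\theta}\wedge\cdots}(S_{u,v}(t)-\varphi_u(t))^2\le C$ with $C\to0$ as $\theta\to0$, and finishes with Chebyshev. You instead exploit the fact that $Y=S_{u,v}-\varphi_u$ satisfies an exactly linear SDE with forcing proportional to $I_{u,v}$, solve it by variation of constants (your representation is, up to an inessential sign, identical to the paper's own formula \eqref{def1}, which the authors only deploy later in Proposition \ref{lm3.1}), and then control the Riemann term pathwise and the stochastic integral via a stopped martingale, Doob's $L^2$ inequality, and a high-probability truncation of $\sup_{[0,T]}\Phi^{\pm1}$. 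Both proofs are sound; yours is self-contained and avoids the black-box citation, at the cost of having to handle the unbounded exponential factor $\Phi$ sitting outside the It\^o integral, which you do correctly by truncating on the event $A_2$ before applying Doob to the stopped integrand. Two harmless remarks: the splitting $\eps/3+\eps/3+\eps/3$ requires choosing $M$ to control both $\sup\Phi$ and $\sup\Phi^{-1}$ simultaneously (a trivial union bound), and Assumption \ref{conditionfg} actually gives the sharper bound $f(s,i)\le Fs$, though your $f(s,i)\le F(s+i)\le 2F\bar H$ is equally serviceable.
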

\begin{proof}
First, in view of part (ii) in Lemma \ref{lem2.1}, there exists $\bar H$ such that 
\begin{equation}\label{2018-1}
\PP\{0\leq S_{u,v}(t),\varphi_u(t)\leq \bar H\;\forall t\in [0,T]\}\geq 1-\dfrac {\eps}2\;\;\forall (u,v)\in [0,H]\times [0,H].
\end{equation}
Second, by a same argument  as in the proofs of \cite[Lemma 3.2]{DANG1} or \cite[Lemma 6.9]{MAO}, there exists a constant $C=C(H,\bar H,T,\theta )$  satisfying
\begin{equation*}
\begin{cases}
\E\sup\limits_{0\leq t\leq T\wedge \tau_{u,v}^{\theta}\wedge \inf\{t\geq 0: S_{u,v}(t)\vee \varphi_u(t)>\bar H\}}\big(S_{u,v}(t)-\varphi_u(t)\big)^2\leq C,\\
C\to 0\;\text{ when}\; \theta \to 0.
\end{cases}
\end{equation*}
Therefore, by virtue of 
Chebyshev's inequality and \eqref{2018-1}, we can choose a sufficiently small constant  $\theta$  such that
\begin{equation*}
\PP\{\abs{S_{u,v}(t)-\varphi_u(t)}\leq \nu\;\;\forall 0\leq t \leq T\wedge\tau_{u,v}^{\theta}\}\geq 1-\eps,\;\;\forall (u,v) \in [0,H]\times(0,\theta].
\end{equation*}
\vskip -10mm
\end{proof}

The following lemma is a generalization of the law of iterated logarithm.
\begin{lem} \label{br}
Let $W(t)$ be a standard Brownian motion and $\phi_t$ be a
stochastic process, $\mathcal  F_t-$ progressively measurable such
that
$$ P\left\{ \int_0^t \phi_s^2\, ds < \infty\right \} = 1 \quad
\text{ for all} \quad  t>0.$$
Then for any $\eps>0$ there exists a constant $q_\eps$, independent of process $\phi$  such that
$$\PP\left\{\bigg |\int_0^t \phi_s\, dW_s \bigg | \leq q_\eps \sqrt
{ m(t) \ln (\abs{m(t)} +1) }\right\}\geq 1-\eps,$$
 where $m(t) =
\int_0^t \phi_s^2 \,ds$.
\end{lem}
\begin{proof} For simplifying  notations, we set
$$ M(t) = \int_0^t\phi_s\,dW_s\;;\; m(t) = \int_0^t
\phi_s^2 \, ds.$$
We define a family of stopping times $\tau_1(t)$ given
by
$$ \tau_1(t)  = \begin{cases} \inf \{\; s\geq 0: m(s) > t \} \\
             \infty \;\;\text { if } \;\; t \geq m(\infty) =
\lim_{ t\uparrow \infty} m(t).\end{cases} $$ 
Applying \cite[Theorem 7.2, p.92]{IW}, on an extension  $(\overset
{\sim} \Omega, \overset {\sim} {\mathcal  F}, \overset
{\sim}  \PP)$ of $(\Omega, \mathcal  F,\PP)$, there exists an
$\overset {\sim}
{\mathcal  F} - $ Brownian motion $\mu(t)$
such that $\mu(t) = M(\tau_1(t)),$ $ t \in [0, \infty).$
Consequently, we can represent $M(t)$ by an $\overset {\sim}
{\mathcal  F} - $ Brownian motion $\mu(t)$ and the stopping times
$\tau_1(m(t))=t$, i.e.,
$$\int_0^t\phi_s\,dW_s = \mu(m(t)).$$
On the other hand,
by virtues of the law of iterated
logarithm we have
$$\limsup_{ t \to \infty} \frac {|\mu(t)|}{
\sqrt{2t\ln|\ln t|}} = 1; \quad \limsup_{ t \to 0} \frac
{|\mu(t)|} { \sqrt{2t\ln|\ln t|}} =1 \quad \text {
a.s.}$$
Therefore, the random variable $\Phi$ defined by
$$\Phi := \sup_{0< t< \infty} \frac {|\mu(t)|} {
\sqrt{t\big(|\ln t| + 1\big)}}$$
is finite\a.s, i.e., $\PP\{ \Phi < \infty\}=1$ and the
distribution of $\Phi$ does not depend on the process $(\phi_t)$.
The definition of $\Phi$ implies that
$$|\mu(t)| \leq \Phi\cdot  \sqrt{t\big(|\ln t| +1\big)}.$$
Hence, one has
$$\bigg| \int_0^t \phi_s(\omega)\, dW_s \bigg| \leq
\Phi \cdot \sqrt{ m(t)\big(|\ln m(t)| +1\big)}.$$ 
Since $\Phi$ is finite \a.s and the distribution of $\Phi$ does not depend on $\phi$, for any $\eps>0$ there exists $q_\eps$ independent of $\phi$ such that $\PP\{\Phi<q_\eps\}\geq 1-\eps.$ Lemma \ref{br} is proved.
\end{proof}

\begin{lem}\label{phi1} 
For any $u\geq 0$, one has
$$\lim_{t\to\infty}\frac{\varphi_u(t)}t=0\a.s$$
\end{lem}
\begin{proof} 
It is seen that as in Lemma \ref{lem2.1}, for $p<\frac{2b_1}{\sigma_1^2}$, we have
$ \mathcal Ls^{\frac {1+p}2}\leq C_4-C_3s^{\frac {1+p}2}$, for some constants $C_4,C_3$.
Using this fact and It\^o's formula for $\varphi_u^{\frac {1+p}2}(t)$, we get that for all $n\in\N$
\begin{equation*}
\sup_{t\in [n,n+1]}\varphi_u^{\frac{1+p}2}(t)\leq\varphi_u^{\frac{1+p}2}(n)+C_4+\sup_{t\in[n,n+1]}\int _n^{t}\frac{1+p}2\varphi_u^{\frac{1+p}2}(s)dB_1(s).
\end{equation*}
As a consequence, one has
\begin{equation}\label{e2.6bs}
\E\sup_{t\in [n,n+1]}\varphi_u^{1+p}(t)\leq2(\varphi_u^{\frac{1+p}2}(n)+C_4)^2+2\E\left(\sup_{t\in[n,n+1]}\int _n^{t}\frac{1+p}2\varphi_u^{\frac{1+p}2}(s)dB_1(s)\right)^2.
\end{equation}
It is similar to Lemma \ref{lem2.1} to obtain that
$$
\sup_{t\in[0,\infty]}\E \left(\varphi_u^{1+p}(t)+\varphi_u^{\frac{1+p}2}(t)\right)\leq \bar Q_1,
$$
for some finite constant $\bar Q_1$.
Which together with \eqref{e2.6bs} and Burkholder-Davis-Gundy inequality give us that
\begin{equation}\label{e2.6bs1}
\E \sup_{t\in [n,n+1]}\varphi_u^{1+p}(t)\leq 2\left(\bar Q_1+C_4\right)^2+8\bar Q_1=:\hat Q.
\end{equation}
 For any $\eps>0$,
put $A_n=\Big\{\dfrac {\sup_{n\leq t\leq n+1}\varphi_u(t)}{n}\geq \eps\Big\}$. From  \eqref{e2.6bs1}, we have $\PP(A_n)\leq (\eps n)^{-(1+p)}\hat Q$, which implies 
$\sum_{n=1}^\infty \PP(A_n)<\infty$. Hence,  $\PP\{A_n \text{ i.o.}\}=0$ by Borel-Cantelli lemma. In other word, $ \limsup\limits_{n\to\infty}\dfrac {\sup_{n\leq t\leq n+1}\varphi_u(t)}{n}\leq \eps$ a.s. Since $\eps$ is arbitrary and $\varphi_u(t)>0$,
the 
 Lemma is proved. 
\end{proof}

\begin{prop}\label{lm3.1}
Assume that the assumption in Theorem \ref{R<1} holds. For any $0<\varepsilon<\min\{\frac 19,-\frac{\lambda}{9}\}$ and $H>1$, there exists $\widehat \delta=\widehat \delta(\eps, H)\in (0,H^{-1})$ such that
$$\mathbb{P}\left\{\limsup_{t\to\infty}\abs{\dfrac{\ln I_{u,v}(t)}{t}- \lambda}\leq \eps  \right\}\geq 1-8\eps\;\forall (u,v)\in [H^{-1};H]\times(0;\widehat \delta].$$
\end{prop}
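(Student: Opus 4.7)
The plan is to expand $\ln I_{u,v}(t)/t$ via the It\^o decomposition \eqref{Lia} and to control each summand on a single good event of probability $\geq 1-8\eps$:
$$\frac{\ln I_{u,v}(t)}{t}=\frac{\ln v}{t}-c_2+\frac{M_{u,v}(t)}{t}+\frac{1}{t}\int_0^t\Big(f(S_{u,v},I_{u,v})-\tfrac12 g^2(S_{u,v},I_{u,v})\Big)ds,$$
where $M_{u,v}(t)=\sigma_2 B_2(t)+\int_0^t g(S_{u,v},I_{u,v})dB_3$. The martingale term $M_{u,v}(t)/t$ will be driven to zero via the exponential martingale inequality together with the boundedness $|g|\leq K$, while the drift integral is pushed to $\lambda+c_2$ by replacing $(S_{u,v},I_{u,v})$ with $(\varphi_u,0)$ and invoking the ergodicity of \eqref{phi}.

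The first task is parameter selection on an initial window $[0,T_0]$. Birkhoff's theorem applied to the ergodic diffusion $\varphi_u$ with invariant density $f^*$ gives $\tfrac{1}{t}\int_0^t[f(\varphi_u,0)-\tfrac12g^2(\varphi_u,0)]ds\to\lambda+c_2$ almost surely. The monotonicity of $s\mapsto f(s,0)-\tfrac12 g^2(s,0)$, combined with the one-dimensional comparison theorem for \eqref{phi} (only $B_1$ acts on $\varphi$), sandwiches this average for any $u\in[H^{-1},H]$ between the ones for $u=H^{-1}$ and $u=H$, so a single $T_0=T_0(\eps,H)$ makes the convergence $\eps/4$-close uniformly in $u\in[H^{-1},H]$ for $t\geq T_0$ on an event of probability $\geq 1-\eps$. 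Using the Lipschitz bounds of Assumption \ref{conditionfg} on the $\bar H$-ball supplied by Lemma \ref{lem2.1}(ii), I pick $\nu$ small so that $|s-\varphi|+i\leq\nu$ forces $|f(s,i)-f(\varphi,0)|+\tfrac12|g^2(s,i)-g^2(\varphi,0)|\leq\eps/4$; Proposition \ref{extinction} then furnishes $\theta>0$ with $\abs{S_{u,v}(t)-\varphi_u(t)}\leq\nu$ on $[0,T_0]\wedge\tau_{u,v}^\theta$ (probability $\geq 1-\eps$), and Proposition \ref{lem2.2} furnishes the candidate $\widehat\delta$ ensuring $\tau_{u,v}^\theta\geq T_0$ (probability $\geq 1-\eps$) whenever $v\leq\widehat\delta$. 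Adding an exponential martingale bound controlling $\abs{M_{u,v}(t)/t}\leq\eps/4$ for $t\geq T_0$ (another $1-\eps$) and enlarging $T_0$ so that $|\ln\widehat\delta|/T_0\leq\eps/4$, the intersection of these events delivers $\abs{\ln I_{u,v}(t)/t-\lambda}\leq\eps$ for every $t\in[T_0,\tau_{u,v}^\theta]$.

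To extend this bound to all $t\geq T_0$ one must show $\tau_{u,v}^\theta=\infty$, which requires iteration. The single-interval estimate already forces $\ln I_{u,v}(T_0)/T_0\leq\lambda+\eps\leq 8\lambda/9<0$ (using $\eps<-\lambda/9$), hence $I_{u,v}(T_0)\leq e^{8\lambda T_0/9}\leq\widehat\delta$ after one final enlargement of $T_0$. Applying the strong Markov property at times $kT_0$, together with the uniform moment bounds of Lemma \ref{lem2.1}(i) and Chebyshev's inequality to place $S_{u,v}(kT_0)$ in a fixed compact set $[H_1^{-1},H_1]$ with high probability, I replay the argument above on each slab $[kT_0,(k+1)T_0]$ with a failure budget $\eps/2^k$; the Borel--Cantelli lemma then forces $\tau_{u,v}^\theta=\infty$ and the uniform estimate on $[T_0,\infty)$. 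The main obstacle is this iteration: the constants $\theta,\widehat\delta,T_0$ calibrated through $H$ must not degenerate as $(S_{u,v}(kT_0),I_{u,v}(kT_0))$ drifts, so both $I_{u,v}(kT_0)\leq\widehat\delta$ and $S_{u,v}(kT_0)\in[H_1^{-1},H_1]$ must hold at every restart while the total probability cost stays inside $8\eps$. The monotonicity hypothesis is used in the first step precisely to purchase the $u$-uniformity needed at every restart without consuming this budget.
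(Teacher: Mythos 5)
Your setup on the initial window $[0,T_0]$ — the It\^o decomposition \eqref{Lia}, ergodicity of $\varphi$, the Lipschitz/closeness argument via Propositions \ref{extinction} and \ref{lem2.2}, and the exponential martingale bound — matches the paper's opening moves. The proof diverges, and breaks down, at the step you yourself flag as "the main obstacle": extending the estimate from $[0,T_0\wedge\tau_{u,v}^\theta]$ to all $t\geq T_0$. Your slab iteration with failure budgets $\eps/2^k$ does not close. To invoke Borel--Cantelli you need summable per-slab failure probabilities, but every constant in the scheme is calibrated to the failure budget: the exponential martingale bound at level $\eps/2^k$ costs an additive $k\ln 2$ in $\ln I$, the threshold $\widehat\delta(\eps/2^k,H)$ from Proposition \ref{lem2.2} shrinks geometrically, and placing $S_{u,v}(kT_0)$ in a compact set with conditional probability $1-\eps/2^k$ via Lemma \ref{lem2.1}(i) and Chebyshev forces $H_1=H_1(k)\to\infty$, which in turn degrades every constant depending on $H_1$. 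Naming the tension is not resolving it; as written the induction hypothesis at step $k$ cannot be verified from step $k-1$. A second, independent gap: restarting the comparison process $\varphi$ from $S_{u,v}(kT_0)$ at each slab severs the link to the single-trajectory ergodic average $\frac1t\int_0^t f(\varphi_u(s),0)\,ds$ from time $0$, and the errors $|S_{u,v}(kT_0)-\varphi_u(kT_0)|$ accumulate across slabs unless you prove a contraction — which is precisely the missing ingredient.

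The paper avoids iteration entirely. On the single good event it first derives the pathwise exponential decay $I_{u,v}(t)\leq v\eps^{-K^2/\eps}e^{(\lambda+6\eps)t}$ up to the stopping time $\zeta_{u,v}=\xi^\nu_{u,v}\wedge\tau^\eta_{u,v}$ (inequality \eqref{I}), and then feeds this decay into the variation-of-constants representation \eqref{def1}, $S_{u,v}-\varphi_u=A_1^{u,v}+A_2^{u,v}$, bounding the Lebesgue integral $A_1$ directly and the stochastic integral $A_2$ via the law-of-iterated-logarithm Lemma \ref{br}. This shows $|S_{u,v}(t\wedge\zeta_{u,v})-\varphi_u(t\wedge\zeta_{u,v})|<\nu$ and $I_{u,v}(t\wedge\tau^\eta_{u,v})<\eta$ for all $t\geq T$ simultaneously, so neither stopping time ever triggers and $\zeta_{u,v}=\infty$ on the good event — a continuity/bootstrap argument on stopping times rather than a restart scheme. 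That mechanism (exponential decay of $I$ $\Rightarrow$ uniform-in-time control of $S-\varphi$ through the explicit solution formula, with Lemma \ref{br} handling the martingale term that defeats the comparison theorem) is the essential idea of the proposition, and it is absent from your proposal. To repair your argument you would need to either import this bootstrap or prove a genuine contraction across slabs, which amounts to the same computation.
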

\begin{proof}
First, we consider the case the function $f(s,0)-\frac 12 g^2(s,0)$ is non-decreasing.
In what follows, although some sets $\Omega_i^{u,v}$ may not depend on $(u,v)$, we still use the superscript $(u,v)$ for the consistence of notations.
Our idea in this proposition is to estimate simultaneously  $\ln I_{u,v}(t)$ and the difference $\abs{S_{u,v}(t)-\varphi_u(t)}$. To start, we need some following primary estimates. 
By definition of $\lambda$ and ergodicity of $\varphi_{H}(t)$ we obtain
\begin{equation*}
-c_2+ \lim_{t\to\infty}\dfrac1t\int_0^{t}f(\varphi_{H}(s),0)ds-\lim_{t\to\infty}\dfrac{1}{2t}\int_0^{t}g^2(\varphi_{H}(s),0)ds = \lambda \a.s
\end{equation*}
So, there exists $T_1=T_1(\eps)>1$ such that $\PP(\Omega_1^{u,v})\geq 1-\eps$, where 
$$\Omega_1^{u,v}=\left\{-c_2+\dfrac1t \int_0^t f(\varphi_{H}(s),0)ds-\dfrac1{2t} \int_0^tg^2(\varphi_{H}(s),0)ds\leq \lambda + \eps,\;\forall t\geq T_1\right\} .$$
Furthermore, by exponential martingale inequality 
we have $\mathbb{P}(\Omega_2^{u,v})\geq1-\eps$, where
\begin{align*}
\Omega_2^{u,v}&=\left\{\int_0^t g(S_{u,v}(s),I_{u,v}(s))dB_3(s)
\leq \dfrac{K^2}{\eps}\ln \dfrac{1}{\varepsilon} + \dfrac{\eps}{2K^2}\int_0^t g^2(S_{u,v}(s),I_{u,v}(s))ds,\;\forall t\geq 0\right \}.
\end{align*}
Since $\lim\limits_{t\to\infty}\dfrac{B_2(t)}{t}=0 \text{\;a.s.\;} $, there is $T_2=T_2(\eps)>1$ such that $\mathbb{P}(\Omega_3^{u,v})\geq1-\eps$ where
$$\Omega_3^{u,v}=\left\{\sigma_2\dfrac{B_2(t)}{t}<\dfrac{\eps}2\;\forall t \geq T_2\right\}.$$
On the other hand, by Lipschitz continuity of $f,g$ and boundedness of $g$, we can choose $0<\nu=\nu(\eps) <  \min\Big\{1,\dfrac{\eps}{2(F+KG)}\Big\}$ such that 
$$
\left\vert f(s_1,i_1)-f(s_2,i_2)\right\vert <\eps\;\;\;\text{and}\;\;\;
\left\vert \dfrac{1}2g^2(s_1,i_1)-\dfrac{1}2 g^2(s_2,i_2)\right\vert <\eps, $$
provided $\left\vert s_1-s_2\right\vert < \nu$ and $\left\vert i_1-i_2\right\vert <\nu$.

By Lemma \ref{br}, there exists $q_\eps$, independent of $u,v$
such that $\PP(\Omega_4^{u,v})\geq 1-\eps$, where
\begin{equation*}
\begin{aligned}
\Omega_4^{u,v}:=&\Big\{\abs{\sigma_1B(t)}\leq q_\eps \sqrt{t(\abs{\ln t}+1)}\;\forall t\geq 0\Big\}
\\&\bigcap \Bigg\{\abs{\int_T^t e^{c_1s-\sigma_1B_1(s)}I_{u,v}(s)g(S_{u,v}(s),I_{u,v}(s))dB_3(s)}\leq q_\eps \sqrt {n(t)(\abs{\ln n(t)}+1)}\;\forall t\geq T\Bigg\},
\end{aligned}
\end{equation*}
and $T:= T_1\vee T_2$ and $n(t)=\int_T^t e^{2c_1s+2q_\eps \sqrt{s(\abs{\ln s} +1)}}I^2_{u,v}(s)g^2(S_{u,v}(s),I_{u,v}(s))ds$.
To simplify notations we denote $q_\eps(t):=q_\eps \sqrt{t(\abs{\ln t} +1)}$. 
It is clear that 
$$\Phi_1(\eps):=\sup_{t\geq 0}e^{-c_1t+q_\eps (t)}e^{c_1T+q_\eps(T)}\Big(1+\frac{K^2}{\eps}\ln \frac 1{\eps}+\frac{\eps}2\Big)T<\infty.$$
On the other hand, by Lemma \ref{lem2.1} (ii), there exists $\bar H$ such that $\forall (u,v)\in [0,H]^2$, $\PP(\Omega_5^{u,v})\geq 1-\eps$ where
$$\Omega_5^{u,v}=\big\{0\leq S_{u,v}(t),I_{u,v}(t)\leq \bar H\;\forall t\in [0,T]\big\}.$$
In view of Proposition \ref{extinction}, there exists $\eta>0$ satisfying $$\eta <\min\Big\{\nu, \dfrac{\nu}{2\Phi_1(\eps)\big(2F\bar H+1\big)},\dfrac{\eps}{2(F+KG)}\Big\}$$ such that $\forall (u,v)\in[H^{-1};H]\times(0;\eta],$ $\mathbb{P}(\Omega_6^{u,v})\geq 1-\eps$ where
$$\Omega_6^{u,v}=\left\{\left\vert S_{u,v}(t)-\varphi_u(t)\right\vert  < \nu \; \forall t \leq T \wedge \tau_{u,v}^{\eta}\right\}\;\text{and}\; \tau_{u,v}^\eta=\inf\{t\geq 0: I_{u,v}(t)>\eta\}.$$
We also set $\xi_{u,v}^\nu=\inf\{t\geq 0: \abs{S_{u,v}(t)-\varphi_u(t)}>\nu\}$ and $\zeta_{u,v}:=\xi_{u,v}^\nu \wedge \tau_{u,v}^\eta$. By virtue of Proposition \ref{lem2.2}, there exists $0<\delta<\min\{H^{-1},\eta\}$ such that $\forall (u,v )\in [H^{-1};H]\times(0;\delta]$, $\mathbb{P}(\Omega_7^{u,v}) \geq 1-\eps,$ where
$\Omega_7^{u,v}=\big\{\tau_{u,v}^{\eta} \geq T\big\}.$ Therefore, for all $(u,v) \in [H^{-1};H]\times(0;\delta]$, $\omega \in \cap_{i=1}^{7}\Omega _i^{u,v}$ we have $\zeta_{u,v}\geq T$.

 Now, following the idea introduced at the beginning, we will estimate simultaneously  $\ln I_{u,v}(t)$ and the difference $\abs{S_{u,v}(t)-\varphi_u(t)}$.
It follows from \eqref{lni} that $\forall (u,v) \in [H^{-1},H]\times (0,\delta]$  we have in $\cap_{i=1}^{7}\Omega _i^{u,v}$
\begin{equation}\label{lni2}
\begin{aligned}
&\ln I_{u,v}(t)
=\ln v -c_2t +  \int_0^{t}\left( f(\varphi_H(s),0) -\dfrac{1}{2}g^2(\varphi_{H}(s),0)\right)ds
\\&\;\;\;\;\;+ \int_0^{t}\Big(f(\varphi_u,0)-\dfrac 12 g^2(\varphi_u(s),0)-f(\varphi_H(s),0)+\dfrac 12 g^2(\varphi_H(s),0) \Big)ds
\\&\;\;\;\;\;+\int_0^{t}\Big(f(S_{u,v}(s),0)-f(\varphi_u(s),0) \Big)ds+\dfrac{1}{2}\int_0^{t}\left( g^2(\varphi_u(s),0)-g^2(S_{u,v}(s),0) \right)ds
\\&\;\;\;\;\;+  \int_0^{t}\Big(f(S_{u,v}(s),I_{u,v}(s))-f(S_{u,v},0) \Big)ds+\dfrac{1}{2}\int_0^{t}\left( g^2(S_{u,v}(s),0)-g^2(S_{u,v}(s),I_{u,v}(s))\right)ds
\\&\;\;\;\;\;+\int_0^t g(S_{u,v}(s),I_{u,v}(s))dB_3(s)+ \sigma_2B_2(t)
 \leq \ln v + \dfrac{K^2}{\eps}\ln \dfrac1{\varepsilon} + (\lambda + 6\eps)t \;\;\forall t \in [T,\zeta_{u,v}],
\end{aligned}
\end{equation}
where we have used the facts $\varphi_u(t)\leq \varphi_H(t) \a.s\forall t\geq 0, u\in [H^{-1},H]$ and the non-decreasing property of $f(s,0)-\dfrac 12 g^2(s,0)$.
Therefore, for all $(u,v) \in [H^{-1},H]\times (0,\delta]$, $\omega\in \cap_{i=1}^{7}\Omega _i^{u,v}$ we get 
\begin{equation}\label{I}
\begin{aligned}
I_{u,v}(t)\leq  v\eps^{-\frac{K^2}{\eps}}e^{ \left(\lambda + 6\eps\right)t}\;\;\forall t \in [T,\zeta_{u,v}].
\end{aligned}
 \end{equation}
To proceed, we will estimate the difference $\abs{S_{u,v}(t)-\varphi_u(t)}$. 
By using  It\^o's formula and variation of constant formula, we get from \eqref{e2.1} and \eqref{phi} that
\begin{equation}\label{def1}
\begin{aligned}
S_{u,v}(t)-\varphi_u(t)&=e^{-c_1t+\sigma_1B_1(t)}\int_0^t e^{c_1s-\sigma_1B_1(s)}I_{u,v}(s)f(S_{u,v}(s),I_{u,v}(s))ds\\
&\quad+e^{-c_1t+\sigma_1B_1(t)}\int_0^t e^{c_1s-\sigma_1B_1(s)}I_{u,v}(s)g(S_{u,v}(s),I_{u,v}(s))dB_3(s)
\\&:=A_1^{u,v}(t)+A_2^{u,v}(t).
\end{aligned}
\end{equation}
In the next paragraph, we are going to estimate $A_1^{u,v}(t),A_2^{u,v}(t)$.

A consequence of Lemma \ref{phi} is that 
there is $\Omega_8^{u}$ satisfying $\PP(\Omega_8^u)\geq 1-\eps$, and $L_1$ (independent of $u$) such that for any $u\in [H^{-1},H]$, $\omega\in\Omega_8^u$, one has
$
\varphi_u(t)\leq L_1e^{\eps t}.
$
 By direct calculations,
from \eqref{I} and the assumption \ref{conditionfg} we obtain that $\forall (u,v)\in [H^{-1},H]\times(0,\delta], \omega \in\cap_{i=1}^{8}\Omega _i^{u,v},t\geq T$
\begin{equation*}
\begin{aligned}
\big|&A_1^{u,v}(t\wedge \zeta_{u,v})\big|\leq  e^{-c_1(t\wedge \zeta_{u,v})+\sigma_1B_1(t\wedge \zeta_{u,v})}\int_0^T e^{c_1s-\sigma_1B_1(s)}I_{u,v}(s)f(S_{u,v}(s),I_{u,v}(s))ds\\
&\quad+ e^{-c_1(t\wedge \zeta_{u,v})+\sigma_1B_1(t\wedge \zeta_{u,v})}\int_T^{t\wedge \zeta_{u,v}} e^{c_1s-\sigma_1B_1(s)}I_{u,v}(s)f(S_{u,v}(s),I_{u,v}(s))ds\\
&\leq2\eta F\bar H e^{-c_1(t\wedge \zeta_{u,v})+q_\eps (t\wedge \zeta_{u,v})}\int_0^T e^{c_1s+q_\eps(s)}ds
\\&\quad+Fe^{-c_1(t\wedge \zeta_{u,v})+q_\eps(t\wedge \zeta_{u,v})}\int_T^{t\wedge \zeta_{u,v}} e^{c_1s+q_\eps(s)}I_{u,v}(s)\big(S_{u,v}(s)+I_{u,v}(s)\big)ds\\
&\leq  2\eta F\bar H\Phi_1(\eps)+Fe^{-c_1(t\wedge \zeta_{u,v})+q_\eps(t\wedge \zeta_{u,v})}\int_T^{t\wedge \zeta_{u,v}} e^{c_1s+q_\eps(s)}I_{u,v}(s)\big(\nu+\varphi_u(s)+I_{u,v}(s)\big)ds\\
&\leq  2 \eta F\bar H\Phi_1(\eps)+vF{\eps}^{-\frac{K^2}{\eps}}e^{-c_1(t\wedge \zeta_{u,v})+q_\eps(t\wedge \zeta_{u,v})}\left[\int_T^{t\wedge \zeta_{u,v}} \hskip -5mm e^{c_1s+q_\eps(s)}e^{(\lambda+6\eps)s}(\nu+L_1e^{\eps s})ds\right.\\
&\hskip 9cm+\left.\int_T^{t\wedge \zeta_{u,v}} \hskip -5mm  e^{c_1s+q_\eps(s)}e^{2(\lambda+6\eps)s}ds\right].
\end{aligned}
\end{equation*}
In addition
\begin{equation*}
\begin{aligned}
&\sup_{t\geq 0}F{\eps}^{-\frac{K^2}{\eps}}e^{-c_1t+q_\eps(t)}\int_T^{t}  e^{c_1s+q_\eps(s)}e^{(\lambda+6\eps)s}(\nu+L_1e^{\eps s})ds:=\Phi_2(\eps)<\infty,\\
&\sup_{t\geq 0}F{\eps}^{-\frac{K^2}{\eps}}e^{-c_1t+q_\eps(t)}\int_T^{t}   e^{c_1s+q_\eps(s)}e^{2(\lambda+6\eps)s}ds:=\Phi_3(\eps)<\infty.
\end{aligned}
\end{equation*}
As a consequence
\begin{equation}\label{def2}
\abs{A_1^{u,v}(t\wedge \zeta_{u,v})}\leq  2\eta F\bar H\Phi_1(\eps)+v\big(\Phi_2(\eps)+\Phi_3(\eps)\big).
\end{equation}
Similarly, $\forall (u,v)\in [H^{-1},H]\times(0,\delta], \omega \in\cap_{i=1}^{8}\Omega _i^{u,v},t\geq T$ 
\begin{equation*}
\begin{aligned}
&\abs{A_2^{u,v}(t\wedge \zeta_{u,v})}\leq  \eta e^{-c_1(t\wedge \zeta_{u,v})+q_\eps(t\wedge \zeta_{u,v})}e^{c_1T+q_\eps(T)}\abs{\int_0^T g(S_{u,v}(s),I_{u,v}(s))dB_3(s)ds}
\\&\quad+ e^{-c_1(t\wedge \zeta_{u,v})+\sigma_1B_1(t\wedge \zeta_{u,v})}\abs{\int_T^{t\wedge \zeta_{u,v}} e^{c_1s-\sigma_1B_1(s)}I_{u,v}(s)g(S_{u,v}(s),I_{u,v}(s))dB_3(s)}\\
&\leq \eta \Phi_1(\eps)+e^{-c_1(t\wedge \zeta_{u,v})+q_\eps(t\wedge \zeta_{u,v})}\int_T^{t\wedge \zeta_{u,v}}q_\eps\sqrt{n(s)(\abs{\ln n(s)}+1)}ds,
\end{aligned}
\end{equation*}
where $n(t)=\int_T^t e^{2c_1s+2q_\eps (s)}I^2_{u,v}(s)g^2(S_{u,v}(s),I_{u,v}(s))ds$.
By \eqref{I} and boundedness of $g(s,i)$ we obtain that 
\begin{equation*}
\begin{aligned}
n(t)
\leq v^2K^2\eps^{-\frac{2K^2}{\eps}}\int_T^t e^{2c_1s+2q_\eps(s)}e^{2(\lambda+6\eps)s}ds.
\end{aligned}
\end{equation*}
Therefore, by a similar argument in the processing of getting \eqref{def2}, there exists $\Phi_{4}(\eps)$ such that for all $(u,v)\in [H^{-1},H]\times (0,\delta],$ $\omega\in \cap_{i=1}^{8}\Omega _i^{u,v}$, $t\geq T$
\begin{equation}\label{def3}
\abs{A_2^{u,v}(t\wedge \zeta_{u,v})}\leq \eta \Phi_1(\eps)+v\Phi_4(\eps).
\end{equation}
Let $\hat \delta\in (0,\delta)$ be a constant satisfying
$$\hat\delta\big[\Phi_2(\eps)+\Phi_3(\eps)+\Phi_4(\eps)\big]<\dfrac{\nu}2\quad\text{and}\quad \hat \delta\eps^{-\frac{K^2}{\eps}}e^{(\lambda + 6\eps)T}<\eta.$$
Hence, by combining \eqref{def1}, \eqref{def2} and \eqref{def3}, we obtain that for all $(u,v)\in [H^{-1}, H]\times (0,\hat\delta]$ and $\omega\in \cap_{i=1}^{8}\Omega _i^{u,v}$, $t\geq T$
\begin{equation*}
\begin{aligned}
\big|S_{u,v}(t\wedge \zeta_{u,v})-&\varphi_u(t\wedge\zeta_{u,v})\big|\\
&\leq \eta \big[2F\bar H\Phi_1(\eps)+\Phi_1(\eps)\big]+v\big[\Phi_2(\eps)+\Phi_3(\eps)+\Phi_4(\eps)\big]<\dfrac \nu 2+\dfrac \nu 2=\nu.
\end{aligned}
\end{equation*}
It follows that $t\wedge \zeta_{u,v} \leq \xi_{u,v}^\nu\;\forall t\geq T$. Therefore, for all $\omega\in \cap_{i=1}^{8}\Omega _i^{u,v}$, $\zeta_{u,v}\leq \xi^\nu_{u,v}$ and the equality only occurs when $\zeta_{u,v}=\xi_{u,v}^\nu=\infty$. As a consequence, $\cap_{i=1}^{8}\Omega _i^{u,v}\subset \{\tau_{u,v}^\eta\leq \xi_{u,v}^\nu\}$. Hence, combining with \eqref{I} we have $\forall (u,v)\in [H^{-1},H]\times(0,\hat\delta],\omega\in \cap_{i=1}^{8}\Omega _i^{u,v}, t\geq T$
\begin{equation*}
I_{u,v}(t\wedge \tau_{u,v}^{\eta})\leq \hat \delta\eps^{-\frac{K^2}{\eps}}e^{(\lambda + 6\eps)T}< \eta .
\end{equation*}
That means $t\wedge \tau_{u,v}^\eta<\tau_{u,v}^\eta\;\forall t\geq T$ or $\tau_{u,v}^\eta=\infty$ for all $(u,v)\in [H^{-1},H]\times(0,\hat\delta]$ and $\omega\in \cap_{i=1}^{8}\Omega _i^{u,v}$.
As a result, by the assumption \ref{conditionfg} we have $\forall (u,v)\in [H^{-1},H]\times(0,\hat\delta]$, $\omega\in \cap_{i=1}^{8}\Omega _i^{u,v}$
\begin{equation*}
\begin{aligned}
\limsup\limits_{t\to\infty}&\abs{\dfrac {\ln I_{u,v}(t)}t-\lambda}\leq\dfrac {\ln v}t + \limsup\limits_{t\to\infty}\dfrac 1t\int_0^t \Big[f(S_{u,v}(s),I_{u,v}(s))-f(\varphi_{u}(s),0)\Big]ds
\\&\;\;\;\; +\limsup\limits_{t\to\infty}\dfrac 1{2t}\int_0^t \Big[g^2(S_{u,v}(s),I_{u,v}(s))-g^2(\varphi_u(s),0)\Big]ds
\\&\;\;\;\;+\limsup\limits_{t\to\infty}\dfrac {\sigma_2B_2(t)}{t}+\limsup\limits_{t\to\infty}\dfrac 1t\int_0^t g(S_{u,v}(s),I_{u,v}(s))dB_3(s)
\\&\leq F(\nu+\eta)+KG(\nu+\eta)= \nu(F+KG)+\eta(F+KG)<\eps.
\end{aligned}
\end{equation*}
The proof is competed by noting that $\PP(\cap_{i=1}^{8}\Omega _i^{u,v})\geq 1-8\eps.$
In the case of $f(s,0)-\dfrac 12 g^2(s,0)$ is non-increasing, this proposition is similarly proved by choosing $\varphi_{H^{-1}}(t)$ in \eqref{lni2} instead of $\varphi_H(t)$.
\end{proof}

\begin{proof}[Proof of Theorem \ref{R<1}]
Let $0<\varepsilon<\min\{\frac 19,\frac{-\lambda}9\}$ and initial point $(u',v')\in\R_+^{2,\circ}$ be arbitrary. 
Choose
$$H\geq\max\Big\{1;\Big(\frac{3Q_1}{\varepsilon}\Big)^{\frac1{1+p}}; 2\Big(\frac{3Q_1}{\varepsilon}\Big )^{\frac 1{\bar p}}\Big\}.$$
We obtain from Lemma \ref{lem2.1} and Chebyshev's inequality that
\begin{equation*}
\limsup_{t\to\infty}\mathbb{P}\Big\{ S_{u',v'}(t)\geq H\Big\}=\limsup_{t\to\infty}\mathbb{P}\Big\{ S^{1+p}_{u',v'}(t) \geq H^{1+p} \Big\} \leq \limsup_{t\to\infty}\dfrac{\mathbb{E}S^{1+p}_{u',v'}(t)}{H^{1+p}}\leq \dfrac{\varepsilon}{3},
\end{equation*}
\begin{equation*}
\limsup_{t\to\infty}\mathbb{P}\Big\{ I_{u',v'}(t)\geq H\Big\}=\limsup_{t\to\infty}\mathbb{P}\Big\{ I^{1+p}_{u',v'}(t) \geq H^{1+p} \Big\} \leq \limsup_{t\to\infty}\dfrac{\mathbb{E}I^{1+p}_{u',v'}(t)}{H^{1+p}}\leq \dfrac{\varepsilon}{3},
\end{equation*}
and
\begin{equation*}
\begin{aligned}
\limsup_{t\to\infty}\mathbb{P}\Big\{ S_{u',v'}(t)+I_{u',v'}(t) \leq 2H^{-1} \Big\}&=\limsup_{t\to\infty}\mathbb{P}\Big\{ [S_{u',v'}(t)+I_{u',v'}(t)]^{-\bar p} \geq 2^{-\bar p}H^{\bar p} \Big\} 
\\& \leq \limsup_{t\to\infty}\dfrac{\mathbb{E}[S_{u',v'}(t)+I_{u',v'}(t)]^{-\bar p}}{2^{-\bar p}H^{\bar p}}
\leq \dfrac{\varepsilon}{3}.
\end{aligned}
\end{equation*}
Hence, it is seen that
\begin{equation}\label{2.1}
\limsup_{t\to\infty}\mathbb{P}\big\{(S_{u',v'}(t),I_{u',v'}(t)) \in \A \big\}\geq 1-\varepsilon,
\end{equation}
where $\A=\big\{(s, i) : 0\leq s \leq H,0<i\leq H, s+i\geq 2H^{-1}\big\}$.
By Proposition \ref{lm3.1}, there exists $ \hat\delta\in (0,H^{-1})$ such that
\begin{equation}\label{2.4}
\mathbb{P}\Bigg\{\limsup_{t\to\infty}\abs{\dfrac{\ln I_{u,v}(t)}{t}- \lambda}\leq \eps \Bigg\}\geq 1-8\varepsilon\;\forall\,(u,v) \in [H^{-1},H]\times(0, \hat\delta].
\end{equation}
That means the process $(S_{u',v'}(t),I_{u',v'}(t))$ is not recurrent (see e.g., \cite{WK} for definition) in the invariant set $\M=\{(s,i): 0\leq s, 0< i\}$. Because the diffusion equation \eqref{e2.1}  is non-degenerate, its solution process is either recurrent or transient (see e.g., \cite[Theorem 3.2]{WK}).
As a result, it must be transient.
Denote by $\A_1=\big\{(s, i) : 0\leq s\leq H,\hat\delta \leq i\leq H, s+i\geq 2H^{-1}\big\}$ a compact subset of $\M$. By transient property of $(S_{u',v'}(t),I_{u',v'}(t))$
\begin{equation}\label{2.2}
\lim_{t\to\infty}\mathbb{P}\big\{(S_{u',v'}(t),I_{u',v'}(t))\in \A_1\big\}=0.
\end{equation}
Combining \eqref{2.1}, \eqref{2.2} and $ \A \backslash \A_1 \subset [H^{-1},H]\times(0, \hat\delta] $ we have
\begin{equation*}
\limsup_{t\to\infty}\mathbb{P}\big\{(S_{u',v'}(t),I_{u',v'}(t)) \in [H^{-1},H]\times(0, \hat\delta]\big\}\geq 1-\varepsilon.
\end{equation*}
Therefore, there exists $T_3$ such that
\begin{equation}\label{2.3}
\mathbb{P}\big\{(S_{u',v'}(T_3),I_{u',v'}(T_3))\in [H^{-1},H]\times(0,\hat\delta]\big\}\geq 1-2\varepsilon.
\end{equation}
The Markov property, \eqref{2.4} and \eqref{2.3} deduce that
\begin{equation*}
\mathbb{P}\Bigg\{\limsup_{t\to\infty}\abs{\dfrac{\ln I_{u',v'}(t)}{t}- \lambda} \leq \eps\Bigg\}\geq(1-2\varepsilon)(1-8\varepsilon)\geq 1-10\varepsilon.
\end{equation*}
Since $\varepsilon$ is arbitrary, 
\begin{equation*}
\mathbb{P}\Big\{\limsup_{t\to\infty}\dfrac{\ln I_{u',v'}(t)}{t}= \lambda \Big\}=1.
\end{equation*}

We move to the proof of second part. Let $0>\bar \lambda > \max\{\lambda,-c_1\}$ be arbitrary and  $\bar\eps>0$ such that $\bar \lambda -\bar \epsilon > \max\{\lambda,-c_1\}$. 
We have
\begin{equation}\label{17}
e^{-\bar\lambda t}\abs{S_{u',v'}(t)-\varphi_{u'}(t)}\leq e^{-\bar\lambda t}\abs{A_1^{u',v'}(t)}+e^{-\bar\lambda t}\abs{A_2^{u',v'}(t)},
\end{equation}
where $A_1^{u',v'},A_2^{u',v'}$ are determined as in \eqref{def1}. 
We can obtain that 
there exists finite random variables $L_3=L_3(u',v')$, depending on $u',v'$, such that 
$S_{u',v'}(t)\leq L_3e^{\bar\eps t}\quad\forall t\geq 0\a.s$
Therefore,  the fact $\limsup_{t\to\infty}\frac{\ln I_{u',v'}(t)}{t}=\lambda, \lim_{t\to\infty}\frac {B_1(t)}t=0\a.s$ and the assumption \ref{conditionfg} imply that there exists a positive finite random variable $L_4=L_4(u',v')$ satisfying
\begin{equation*}
\begin{aligned}
e^{-c_1+\sigma_1B_1(t)}\int_0^t &e^{c_1s-\sigma_1B_1(s)}I_{u',v'}(s)f(S_{u',v'}(s),I_{u',v'}(s))ds
\\&\leq L_4 e^{-(c_1-\frac {\bar \eps}5) t}\int_0^t e^{(c_1+\frac {\bar \eps}5)s}e^{(\lambda+\frac{\bar\eps}5)s}(L_3e^{\bar\eps s}+e^{(\lambda+\frac{\bar\eps}5)s})ds.
\end{aligned}
\end{equation*}
Hence, using L'Hospital's rule yields (see \cite[proof of Theorem 2.2]{NHU1} for detailed calculations) yields
\begin{multline}\label{18}
\lim_{t\to\infty}e^{-\bar\lambda t}\abs{A_1^{u',v'}(t)}\\
\leq \lim_{t\to\infty}L_4e^{-(c_1+\bar\lambda-\frac {\bar \eps}5 )t}\int_0^t e^{(c_1+\frac {\bar \eps}5) s}e^{(\lambda+\frac{\bar\eps}5)s}(L_3e^{\bar\eps s}+e^{(\lambda+\frac{\bar\eps}5)s})ds
=0\a.s
\end{multline}
On the other hand, as in the proof of Lemma \ref{br}
$$\limsup_{t\to\infty}\dfrac{\abs{e^{-c_1t+\sigma_1B_1(t)}\int_0^t e^{c_1s-\sigma_1B_1(s)}I_{u,v}(s)g(S_{u,v}(s),I_{u,v}(s))dB_3(s)}}{e^{-c_1t+\sigma_1B_1(t)}\sqrt {n_1(t)(\abs{\ln n_1(t)}+1)}}<\infty\a.s,$$
where $n_1(t)=\int_0^t e^{2c_1s-2\sigma_1B_1(s)}I^2_{u,v}(s)g^2(S_{u,v}(s),I_{u,v}(s))ds$.
That means
\begin{equation}\label{19}
\limsup_{t\to\infty}\dfrac{e^{-\bar\lambda t}\abs{A_2^{u',v'}(t)}}{e^{-\bar\lambda t}e^{-c_1t+\sigma_1B_1(t)}\sqrt {n_1(t)(\abs{\ln n_1(t)}+1)}}<\infty\a.s
\end{equation}
By the facts $g(s,i)\leq K\;\forall s,i\geq 0$, $\limsup_{t\to\infty}\frac{\ln I_{u',v'}(t)}{t}=\lambda\a.s$ and $\limsup_{t\to\infty}\frac{\sigma_1B_1(t)}{t}=0\a.s$, there exists a positive finite random variable $L_5=L_5(u',v')$ such that
\begin{equation*}
e^{-\bar\lambda t}e^{-c_1t+\sigma_1B_1(t)}\sqrt {n_1(t)(\abs{\ln n_1(t)}+1)}\leq L_5e^{-(c_1+\bar\lambda-\frac{\bar\eps}5)t}\sqrt {n_2(t)(\abs{\ln n_2(t)}+1)}\a.s
\end{equation*}
where $n_2(t)=e^{(2c_1+2\lambda+\frac{\bar\eps}{5})t}$.
Therefore, it is easy to see that
\begin{equation}\label{20}
\limsup_{t\to\infty}e^{-\bar\lambda t}e^{-c_1t+\sigma_1B_1(t)}\sqrt {n_1(t)(\abs{\ln n_1(t)}+1)}=0\a.s
\end{equation}
Combining \eqref{19} and \eqref{20} we have
\begin{equation}\label{21}
\limsup_{t\to\infty}e^{-\bar\lambda t}\abs{A_2^{u',v'}(t)}=0\a.s
\end{equation}
Hence, \eqref{17},\eqref{18} and \eqref{21} imply that
$$\limsup_{t\to\infty}e^{-\bar\lambda t}\abs{S_{u',v'}(t)-\varphi_{u'}(t)}=0\a.s$$
This means
\begin{equation*}
\limsup_{t\to \infty}\frac{\ln \abs{S_{u',v'}(t)-\varphi_{u'}(t)}}t\leq \max\{\lambda,-c_1\}.
\end{equation*}
The proof is complete.
\end{proof}
\section {Permanence}\label{secper}
In this section, we deal with  the case $R>1$ (equivalently, $\lambda>0$). Because the proofs are rather technical, we  explain briefly the main ideas and steps to obtain the results before giving the detailed proofs.
\begin{thm}\label{R>1}
Assume that $\lambda>0$. We also assume that the function $f(s,0)-\dfrac 12 g^2(s,0)$ is non-decreasing. Then for any initial  point $(u,v)\in \mathbb{R}_+^{2,\circ}$, the system \eqref{e2.1} is permanent, i.e.,  the solution $(S_{u,v}(t),I_{u,v}(t))$ has a unique invariant probability $\pi^*(\cdot)$ concentrated on $\mathbb{R}_+^{2,\circ}$. Moreover,
\begin{itemize}
\item[{\rm (a)}] 
For any $(u,v)\in\mathbb{R}^{2,\circ}_+,$ 
\begin{equation*}\label{etv}
\lim\limits_{t\to\infty} t^{q^*}\|P(t, (u,v), \cdot)-\pi^*(\cdot)\|=0,
\end{equation*}
where $\|\cdot\|$ is the total variation norm, $q^*$ is any positive number and $P(t, (u, v), \cdot)$ is the transition probability of $(S_{u,v}(t), I_{u,v}(t))$.
\item[{\rm(b)}] The strong large law number holds, i.e, for any $\pi^*$-integrable $ h: \mathbb{R}^{2,\circ}_+\to\mathbb{R}$, we have
$$\lim\limits_{t\to\infty}\frac1t\int_0^th(S_{u,v}(s), I_{u,v}(s))ds=\int_{\mathbb{R}^{2,\circ}_+} h(x,y)\pi^*(dx, dy) \;a.s.\,\forall \,(u,v)\in\mathbb{R}^{2,\circ}_+.$$
\end{itemize}
\end{thm}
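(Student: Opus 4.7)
The plan is to establish ergodicity in the Foster--Lyapunov framework of Meyn--Tweedie, combined with sub-geometric drift conditions of Douc--Fort--Moulines to obtain the arbitrary polynomial convergence rate. Four ingredients have to be assembled: (i) moment and boundedness estimates from Lemma \ref{lem2.1} preventing escape to infinity or to $\{s+i=0\}$; (ii) non-degeneracy of the diffusion in $\R_+^{2,\circ}$, which supplies strong Feller and irreducibility; (iii) a repulsion-from-boundary estimate using $\lambda>0$ that rules out accumulation at $\{i=0\}$; (iv) a family of Lyapunov functions yielding any prescribed polynomial rate.

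The central analytic step is the construction, on $\R_+^{2,\circ}$, of a Lyapunov function of the form
$$ U(s,i)=M_1\bigl[(s+i)^{1+p}+(s+i)^{-\bar p}\bigr] - M_2\ln i + M_3\,\theta(s), $$
where $\theta$ is a classical solution of the one-dimensional Poisson equation for the boundary SDE \eqref{phi}, namely
$$ \bigl[(a_1-b_1s)\partial_s+\tfrac12\sigma_1^2 s^2\partial_{ss}\bigr]\theta(s) \;=\; f(s,0)-\tfrac12 g^2(s,0)-(c_2+\lambda). $$
The right-hand side is centered with respect to the invariant density $f^*$ of \eqref{hpp} precisely by the defining identity \eqref{ld}, so a solution $\theta$ of at most polynomial growth exists by standard one-dimensional Poisson-equation theory. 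Using $\Lom(-\ln i)=c_2-f(s,i)+\tfrac12 g^2(s,i)$ and observing that the additional $I$-dependent terms in $\Lom\theta(s)$ are $O(i)$ by Assumption \ref{conditionfg}, choosing $M_2=M_3$ yields
$$ \Lom\bigl(-M_2\ln i + M_3\theta(s)\bigr)\leq -M_2\lambda+O(i), $$
which is strictly negative in a neighbourhood of $\{i=0\}$. Combined with the drift bound \eqref{10} controlling the first bracket at infinity and as $s+i\to 0$, this produces a Foster--Lyapunov inequality $\Lom U\leq -cU+K\mathbf{1}_{\mathcal K}$ outside a compact $\mathcal K\subset\R_+^{2,\circ}$, hence positive recurrence of $(S_{u,v},I_{u,v})$ to $\mathcal K$.

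A direct computation of the diffusion matrix of \eqref{e2.1} shows that its determinant is at least $\sigma_1^2\sigma_2^2 s^2 i^2>0$ on $\R_+^{2,\circ}$; the diffusion is therefore uniformly elliptic on compact subsets of the interior, and classical parabolic theory supplies a smooth, strictly positive transition density $p_t((u,v),\cdot)$ on $\R_+^{2,\circ}$. This yields the strong Feller property and irreducibility, which together with the recurrence obtained above give a unique invariant probability $\pi^*$ concentrated on $\R_+^{2,\circ}$; part (b) then follows from the Birkhoff ergodic theorem applied to the ergodic Markov--Feller process. For part (a) I would, for each prescribed $q^*>0$, refine $U$ to a Lyapunov function $U_{q^*}$: replace $-\ln i$ by $i^{-\alpha_{q^*}}$ with sufficiently small $\alpha_{q^*}>0$, enlarge the exponent $1+p$, and resolve the corresponding Poisson equation at the new centering. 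Tuning the exponents so that $U_{q^*}$ satisfies the sub-linear drift inequality
$$ \Lom U_{q^*}\leq -c_{q^*}\,U_{q^*}^{\,1-1/(q^*+1)}+K_{q^*}\mathbf{1}_{\mathcal K_{q^*}} $$
on a compact petite set $\mathcal K_{q^*}$, the polynomial ergodicity theorem of Douc--Fort--Moulines delivers $\|P(t,(u,v),\cdot)-\pi^*(\cdot)\|=o(t^{-q^*})$, which is the conclusion.

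The step I expect to be the main obstacle is the rigorous justification of the boundary drift inequality: the heuristic at \eqref{ld} tacitly approximates $S_{u,v}$ by $\varphi_u$, but the comparison theorem of \cite{IW} is unavailable here because $S$ and $I$ share the driving noise $B_3$. As in the extinction proof I would therefore import the pathwise machinery of Section \ref{secext}, namely the variation-of-constants identity \eqref{def1} and the Brownian time-change estimate Lemma \ref{br}, to control $|S_{u,v}-\varphi_u|$ by a stochastic functional of the small quantity $I_{u,v}$; the $O(i)$ remainder in the Poisson calculation must then be strong enough to absorb both the direct $i$-dependence and the $|s-\varphi|$ fluctuation. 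Coordinating this quantitative tuning with the $q^*$-dependent exponents $\alpha_{q^*}, p_{q^*}$ required for arbitrary polynomial rate, while simultaneously verifying the petite-set condition for the compacts $\mathcal K_{q^*}\subset\R_+^{2,\circ}$, is where the bulk of the technical effort will go.
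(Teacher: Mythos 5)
Your route---a continuous-time Foster--Lyapunov inequality built from $V_1(s,i)-M_2\ln i+M_3\theta(s)$, where $\theta$ solves the Poisson equation for the boundary diffusion with source $f(\cdot,0)-\frac12g^2(\cdot,0)-(c_2+\lambda)$---is genuinely different from the paper's, which never touches the generator near $\{i=0\}$: the paper instead proves finite-horizon pathwise estimates on $\ln^- I_{u,v}(t)$ over $[T^*,mT^*]$ (Lemmas \ref{lem2.4}--\ref{lem2.6}, Propositions \ref{prop2.3}--\ref{prop2.4}), assembles them into a drift inequality of the form $\E V_*(X(mT^*))\leq V_*(x)-P_1^*\sqrt{V_*(x)}+P_2^*\1_{\K}$ for the sampled chain with $V_*=V_1+(\ln^- v)^2$, and invokes Jarner--Roberts; the arbitrary polynomial rate comes from repeating this with $(\ln^- v)^{1+\hat q}$. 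Your approach, if it closed, would be cleaner and would in fact deliver a \emph{geometric} drift (both $i^{-\alpha}$ and $(s+i)^{1+p}$ satisfy full linear drift inequalities where they are large), which is stronger than what the paper proves --- a warning sign that the generality of Assumption \ref{conditionfg} is obstructing exactly this step.

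The concrete gap is your assertion that ``the additional $I$-dependent terms in $\Lom\theta(s)$ are $O(i)$.'' They are not, uniformly in $s$: writing $L_0$ for the boundary generator, $\Lom\theta(s)=L_0\theta(s)-if(s,i)\theta'(s)+\frac12 i^2g^2(s,i)\theta''(s)$, and since $f(s,i)\leq Fs$ is unbounded and $\theta$ (solving the Poisson equation against the inverse-gamma density $f^*$ with a Lipschitz source) has at best bounded first derivative, the remainder is of size $O\big(is\,|\theta'(s)|+i^2|\theta''(s)|\big)$. On the set where both $s$ and $i$ are large this term is of order $is\asymp(s+i)^2$, while the only available negative drift is $-C_3(s+i)^{1+p}$ from \eqref{10} with $p<\min\{2b_1/\sigma_1^2,\,2b_2/\sigma_2^2\}$; absorption therefore requires $p\geq1$, i.e.\ a restriction on the noise intensities that the theorem does not assume. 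Truncating $\theta$ in $i$ does not obviously save you, because the cutoff derivatives meet the quadratic covariation $d\langle S,I\rangle=-I^2g^2\,dt$ and the drift $I(-b_2+f)$, producing terms of order $s^2$ on the transition band. In addition, the $i^{-\alpha}$ refinement for part (a) needs $\alpha$ small enough that $b_2-f+\frac{\alpha+1}{2}(\sigma_2^2+g^2)$ still averages to something negative \emph{after} the same Poisson correction, so the identical difficulty recurs there; and the step you yourself flag---replacing the unavailable comparison with $\varphi_u$ by quantitative control of $|S_{u,v}-\varphi_u|$---is precisely the content of Propositions \ref{extinction} and \ref{lm3.1}-type arguments that the paper spends most of its effort on. As written, the proposal is a plausible program, but its central inequality $\Lom U\leq -cU+K\1_{\K}$ is not established and fails as stated for small $p$.
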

The main idea to prove this theorem is similar to one in \cite {DANG}. That is to construct a function $V_*:\R_+^{2,\circ}\rightarrow [1,\infty)$ satisfying
\begin{equation}\label{e4.2}\E V_*(S_{u,v}(t^*),I_{u,v}(t^*))\leq V_*(u,v)-P_1^*V_*^{\gamma}(u,v)+P_2^*\1_{\{u,v\in \K\}} \;\;\;\;\;\forall (u,v)\in\R_+^{2,\circ},
\end{equation}
for some petite set $\K$ and some $\gamma\in (0,1)$, $P_1^*,P_2^*>0,t^*>1$ and then apply \cite[Theorem 3.6]{JR}. We also refer the reader to \cite[ pp.106 -124]{MT} for further details on petite sets. Basing on the definition of the value  $\lambda$ in Section \ref{secext}, we will construct $V_*$ as a sum of the Lyapunov function $V_1(u,v)$ defined  in the Lemma \ref{lem2.1} and the function $\ln^- v:=\max\{-\ln v,0\}.$ 
 If the solution starts from a initial point $(u,v)$ with  sufficient small  $v$, the functions $\ln^- I_{u,v}(t^*),\ln^- v$ are utilized to dominate the inequality \eqref{e4.2} (see Propositions \ref{prop2.3} and \ref{prop2.4}) while in the remaining region, the Lyapunov functions $V_1(S_{u,v}(t^*),I_{u,v}(t^*)), V_1(u,v)$ play an important role (by using Lemma \ref{lem2.1}). 
Lemmas \ref {lem2.4}, \ref{lem2.5} and \ref{lem2.6} are auxiliary results needed for Propositions \ref{prop2.3} and \ref{prop2.4}.
\begin{lem}\label{lem2.4}
There are positive constants $K_1, K_2$ such that, for any $t\geq 1$ and $A\in\F$
\begin{equation*}
\E\big(( \ln^- I_{u,v}(t))^{2}\1_A\big)\leq \PP(A)(\ln^- v)^2+ K_1 \sqrt{\PP(A)}\ln^- v\;t+K_2\sqrt{{\PP(A)}}\;t^2,
\; \forall (u,v) \in \R_{+}^{2}, v\neq 0.
\end{equation*}
\end{lem}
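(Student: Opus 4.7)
\textbf{Proof proposal for Lemma \ref{lem2.4}.} The plan is to derive a pointwise upper bound for $\ln^- I_{u,v}(t)$ starting from the Itô representation \eqref{lni}, then square and take expectations restricted to $A$, controlling the resulting cross terms via Cauchy--Schwarz.

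First, I would rewrite \eqref{lni} as
\begin{equation*}
-\ln I_{u,v}(t) = -\ln v + c_2 t - \int_0^t f(S_{u,v},I_{u,v})\,ds + \tfrac{1}{2}\int_0^t g^2(S_{u,v},I_{u,v})\,ds - \sigma_2 B_2(t) - M(t),
\end{equation*}
where $M(t):=\int_0^t g(S_{u,v}(s),I_{u,v}(s))\,dB_3(s)$. Using $f\ge 0$ and $g\le K$ from Assumption \ref{conditionfg}, and the inequality $(x)^+\le\sum (x_i)^+$, I obtain
\begin{equation*}
\ln^- I_{u,v}(t) \le \ln^- v + \kappa\,t + \sigma_2|B_2(t)| + |M(t)|, \qquad \kappa:=c_2+\tfrac{K^2}{2}.
\end{equation*}
Squaring and writing $Y(t):=\kappa t+\sigma_2|B_2(t)|+|M(t)|$, this gives
\begin{equation*}
(\ln^- I_{u,v}(t))^2 \le (\ln^- v)^2 + 2(\ln^- v)\,Y(t) + Y(t)^2.
\end{equation*}

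Next, multiply by $\1_A$ and take expectations. The first term yields $\PP(A)(\ln^- v)^2$ directly. For the second, apply Cauchy--Schwarz:
\begin{equation*}
\E[Y(t)\1_A] \le \sqrt{\PP(A)}\,\sqrt{\E Y(t)^2}.
\end{equation*}
Using $(a+b+c)^2\le 3(a^2+b^2+c^2)$, together with $\E B_2^2(t)=t$ and the Itô isometry $\E M^2(t)=\E\int_0^t g^2\,ds\le K^2 t$, I get $\E Y(t)^2\le 3(\kappa^2 t^2+\sigma_2^2 t+K^2 t)\le C_1 t^2$ for some constant $C_1$ when $t\ge 1$, giving the $K_1\sqrt{\PP(A)}\ln^- v\,t$ term. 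For the last term, a second Cauchy--Schwarz gives
\begin{equation*}
\E[Y(t)^2\1_A] \le \sqrt{\PP(A)}\,\sqrt{\E Y(t)^4}.
\end{equation*}
Bounding $\E Y(t)^4\le 27(\kappa^4 t^4+\sigma_2^4\E B_2^4(t)+\E M^4(t))$ and using $\E B_2^4(t)=3t^2$ together with the Burkholder--Davis--Gundy inequality $\E M^4(t)\le C\,\E(\int_0^t g^2\,ds)^2\le CK^4 t^2$ yields $\E Y(t)^4\le C_2 t^4$ for $t\ge 1$, producing the $K_2\sqrt{\PP(A)}\,t^2$ term.

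There is no genuine obstacle here; the only care needed is in merging the lower-order terms (constants, linear-in-$t$, $t^2$) into the two clean powers $t$ and $t^2$ stated in the lemma, which is why the hypothesis $t\ge 1$ is imposed. Collecting the three contributions yields the claimed inequality with explicit $K_1,K_2$ depending only on $c_2,\sigma_2,K$.
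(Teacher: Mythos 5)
Your proposal is correct and follows essentially the same route as the paper: the same pointwise bound $\ln^- I_{u,v}(t)\le \ln^- v+(c_2+\tfrac{K^2}{2})t+\sigma_2|B_2(t)|+|M(t)|$ derived from \eqref{lni}, followed by squaring, multiplying by $\1_A$, and controlling the cross and quadratic terms via Cauchy--Schwarz together with the It\^o isometry and the Burkholder--Davis--Gundy inequality. The only cosmetic difference is that you bundle the three fluctuation terms into a single $Y(t)$ before applying Cauchy--Schwarz, whereas the paper treats each term separately; the estimates and constants are the same.
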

\begin{proof}
For any initial point $(u,v)\in \R_+^{2}$, $v\neq 0$, we obtain from \eqref{lni} that
  \begin{align*}
-\ln I_{u,v}(t)
\leq  -\ln v+ \left(c_2+\dfrac{K^2}2\right)t+\sigma_2\abs{B_2(t)}+\abs{\int_0^tg(S_{u,v}(s),I_{u,v}(s))dB_3(s)}.
\end{align*}
Hence
\begin{equation*}
\ln^- I_{u,v}(t)\leq \ln^- v+\left(c_2+\dfrac{K^2}2 \right)t+\sigma_2\abs{B_2(t)}+\abs{\int_0^tg(S_{u,v}(s),I_{u,v}(s))dB_3(s)}.
\end{equation*}
From the inequality $(x_1+x_2+x_3+x_4)^2\leq x_1^2+3(x_2^2+x_3^2+x_4^2)+2x_1(x_2+x_3+x_4)$ we get
\begin{equation*}
\begin{aligned}
\hskip -2mm (\ln^- &I_{u,v})^2\1_A\leq \hskip -1mm (\ln^- v)^2\1_A+3t^2\Big(c_2+\hskip -1mm \frac{K^2}2\Big)^2\1_A+3 \sigma_2^2B_2^2(t)\1_A+3\abs{\int_0^t\hskip-2mm g(S_{u,v}(s),I_{u,v}(s))dB_3(s)}^2\hskip-2mm \1_A\\
&+2\ln^- v\Big(c_2+\frac{K^2}2\Big)t\1_A+2\sigma_2\ln^- v\abs{B_2(t)}\1_A+2\ln^- v\abs{\int_0^tg(S_{u,v}(s),I_{u,v}(s))dB_3(s)}\1_A.
\end{aligned}
\end{equation*}
Applying H\"older's inequality and Burkholder-Davis-Gundy inequality, we obtain 
\begin{align*}
&\E \abs{B_2(t)}\1_A\leq \sqrt{\PP(A)}\sqrt{\E B_2^2(t)}\leq \sqrt{\PP(A)}\sqrt t;\;\; \E {B_2^2(t)}\1_A\leq \sqrt{\PP(A)}\sqrt{\E B_2^4(t)}\leq 3\sqrt{\PP(A)}\,t;\\
&\E \abs{\int_0^t g(S_{u,v}(s),I_{u,v}(s))dB_3(s)}\1_A\leq  \sqrt{\PP(A)}\left(\int_0^t g^2(S_{u,v}(s),I_{u,v}(s))ds\right)^{\frac12}\leq K\sqrt{\PP(A)}\sqrt t,
\end{align*}
and
\begin{align*}
&\E \abs{\int_0^t g(S_{u,v}(s),I_{u,v}(s))dB_3(s)}^2\1_A\leq \sqrt{\PP(A)} \left(\E\abs{\int_0^t g(S_{u,v}(s),I_{u,v}(s))dB_3(s)}^{4}\right)^{\frac12}\\
&\quad\quad\leq \sqrt{\PP(A)} \left(3\E\abs{\int_0^t g^2(S_{u,v}(s),I_{u,v}(s))ds}^{2}\right)^{\frac12}\leq 3\sqrt{\PP(A)}K^2t.
\end{align*}
Therefore, there exist two constants $K_1, K_2$ such that 
\begin{equation*}
\E(\ln^- I_{u,v}(t))^{2}\1_A\leq \PP(A)[\ln^- v]^2+ K_1\sqrt{\PP(A)}\ln^- v\, t+K_2\sqrt{{\PP(A)}}\,t^2.
\end{equation*}
Lemma is proved.
\end{proof}
\begin{lem}\label{lem2.5}
For any $\eps>0$, there is a constant $M(\eps)>0$ such that
$$\PP\left\{\abs{\sigma_2B_2(t)+\int_0^t g(S_{u,v}(s),I_{u,v}(s))dB_3(s)}\leq M(\eps){t^{\frac23}} ,\;\forall t\geq 1\right\}\geq 1-\eps.$$
\end{lem}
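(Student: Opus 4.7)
The plan is to combine the exponential martingale inequality (Mao, Theorem 7.4, p.\,44, already used several times earlier in the paper) with a dyadic decomposition of the time axis $[1,\infty)$. Set
$$M(t):=\sigma_2 B_2(t)+\int_0^t g(S_{u,v}(s),I_{u,v}(s))\,dB_3(s),$$
a continuous local martingale whose quadratic variation obeys the deterministic bound
$$\langle M\rangle_t=\sigma_2^2\,t+\int_0^t g^2(S_{u,v}(s),I_{u,v}(s))\,ds\leq c_0^2\,t,\qquad c_0^2:=\sigma_2^2+K^2,$$
by the bound $g\leq K$ in Assumption \ref{conditionfg}. Heuristically $|M(t)|=O(\sqrt{t\ln\ln t})$ by the law of the iterated logarithm, which is much smaller than $t^{2/3}$; what the lemma demands is a \emph{uniform-in-$t$} quantification, with a constant $M(\eps)$ that does not depend on $(u,v)$.

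For each integer $n\geq 0$ I would apply the exponential martingale inequality to both $M$ and $-M$ on the interval $[0,2^{n+1}]$ with parameters
$$\alpha_n:=2^{-n/3},\qquad \beta_n:=A\cdot 2^{2n/3},$$
where $A=A(\eps)>0$ is to be chosen. This gives
$$\PP\Bigl\{\sup_{0\leq t\leq 2^{n+1}}\bigl[|M(t)|-\tfrac{\alpha_n}{2}\langle M\rangle_t\bigr]>\beta_n\Bigr\}\leq 2e^{-\alpha_n\beta_n}=2e^{-A\cdot 2^{n/3}}.$$
Picking $A=A(\eps)$ large enough makes $\sum_{n\geq 0}2e^{-A\cdot 2^{n/3}}\leq \eps$ by the super-exponential decay. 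On the intersection $\Omega_\eps$ of the complementary good events (of probability $\geq 1-\eps$ by a union bound), for every $n\geq 0$ and every $t\in[2^n,2^{n+1}]$,
$$|M(t)|\leq \tfrac{\alpha_n}{2}\langle M\rangle_t+\beta_n\leq \tfrac{c_0^2\alpha_n}{2}\cdot 2^{n+1}+A\cdot 2^{2n/3}=(c_0^2+A)\cdot 2^{2n/3}\leq (c_0^2+A)\,t^{2/3},$$
using $t\leq 2^{n+1}$ to bound the penalty and $t\geq 2^n$ to convert $2^{2n/3}$ into $t^{2/3}$. Setting $M(\eps):=c_0^2+A(\eps)$ and noting that $[1,\infty)=\bigcup_{n\geq 0}[2^n,2^{n+1}]$ yields the claim.

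There is no genuine obstacle; the whole argument is a clean application of tools already invoked in Section \ref{secext}. The only delicate point is the joint scaling: the choice $\alpha_n=2^{-n/3}$ is tuned so that, on the $n$-th dyadic block, the \emph{deterministic} penalty $\tfrac{\alpha_n}{2}\langle M\rangle_t$ has order $t^{2/3}$, while simultaneously the exponents $\alpha_n\beta_n=A\cdot 2^{n/3}$ grow fast enough to keep the \emph{probabilistic} tail $\sum_n e^{-\alpha_n\beta_n}$ summable. A parametrization $\alpha_n=2^{-n\theta}$, $\theta\in(0,1/2)$, would produce any H\"older exponent $1-\theta\in(1/2,1)$; the value $\theta=1/3$ yields exactly the exponent $2/3$ required by the subsequent arguments in Section \ref{secper}, and it is essential for those applications that $M(\eps)$ is independent of the initial datum $(u,v)$.
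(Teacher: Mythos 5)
Your proof is correct, and it takes a genuinely different route from the paper's. The paper disposes of this lemma in one line by invoking its Lemma \ref{br} (the generalized law of the iterated logarithm, proved via the Dambis--Dubins--Schwarz time change): since $g\leq K$, the quadratic variation satisfies $m(t)\leq(\sigma_2^2+K^2)t$, so the bound $q_\eps\sqrt{m(t)(\ln(|m(t)|+1))}$ from Lemma \ref{br} is $O(\sqrt{t\ln t})\leq M(\eps)t^{2/3}$ for $t\geq1$, with $q_\eps$ independent of the integrand and hence of $(u,v)$. You instead run a self-contained dyadic-block argument with the exponential martingale inequality, tuning $\alpha_n=2^{-n/3}$, $\beta_n=A\cdot2^{2n/3}$ so that the penalty $\tfrac{\alpha_n}{2}\langle M\rangle_t$ is of order $t^{2/3}$ on the $n$-th block while $\sum_n e^{-\alpha_n\beta_n}=\sum_n e^{-A2^{n/3}}$ can be made less than $\eps$; the arithmetic checks out, the union bound over $M$ and $-M$ is handled correctly, and your $M(\eps)=\sigma_2^2+K^2+A(\eps)$ is indeed uniform in $(u,v)$, which is the property the subsequent Lemma \ref{lem2.6} actually needs. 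What the paper's route buys is economy (the heavy lifting was already done in Lemma \ref{br}) and a sharper $\sqrt{t\ln t}$ rate; what yours buys is independence from the DDS machinery and, as you note, the flexibility to produce any exponent in $(1/2,1)$ by varying $\theta$. Either argument is acceptable here.
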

\begin{proof} The proof follows from Lemma \ref{br} in paying attention that $g$ is bounded. 
\end{proof}
Choose an integer number $m>2$ such that $\dfrac m{m-1}<\dfrac {\lambda}{12c_2}+1$ and a sufficiently small number $\eps^*\in(0,1)$ satisfying
\begin{equation}\label{e4.4}
  \frac{3\lambda}2(1-\eps^*)-K_1\sqrt{\eps^*}> \lambda\;\;
\text{ and }\;\;
\dfrac 34\lambda(1-5\eps^*)-mK_1\sqrt{5\eps^*}>\frac{\lambda}2.
\end{equation}
By the non-decreasing property of $f(s,0)-\frac 12 g^2(s,0)$ and the definition of $\lambda$, there exists $H^*>0$ such that $\inf\limits_{s\geq H^*} \Big\{f(s,0)-\dfrac 12 g^2(s,0)\Big\}\geq c_2+\dfrac {3\lambda}4.$ 
In what follows, to simplify notations, we suppress the superscript $(u,v)$ on $(\Omega_i^*)^{u,v}, (\Omega_i^{**})^{u,v}$  if there is no confusion  although they  may depend on $(u,v)$. Moreover, $m$, $\eps^*$ and $H^*$ satisfying the above conditions are fixed.
\begin{lem}\label{lem2.6}
  For $m, \eps^*$, $H^*$ chosen as above, there are $\delta_1^*\in(0,1)$ and $T^*>1$ such that
  \begin{equation*}
    \PP\Big\{\ln v+\frac{3\lambda t}{4}\leq \ln I_{u,v}(t)<0\;\;\forall\, t\in [T^*, mT^*]\Big\}\geq 1-\eps^*,
  \end{equation*}
  for all $(u,v)\in [0,H^*]\times(0,\delta_1^*]$.
\end{lem}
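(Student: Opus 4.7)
The strategy is to bound both sides of $\ln I_{u,v}(t)$ on the window $[T^*, mT^*]$ via the It\^o expansion \eqref{lni}
\[
\ln I_{u,v}(t)-\ln v=-c_2 t+\int_0^t\Big(f(S_{u,v},I_{u,v})-\tfrac12 g^2(S_{u,v},I_{u,v})\Big)ds+M(t),
\]
where $M(t):=\sigma_2 B_2(t)+\int_0^t g(S_{u,v}(s),I_{u,v}(s))dB_3(s)$. The upper bound $\ln I_{u,v}(t)<0$ will be a soft consequence of choosing $\delta_1^*$ tiny; the lower bound $\ln I_{u,v}(t)\geq \ln v+\tfrac{3\lambda t}{4}$ is the substantive part and drives the choice of $T^*$.

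For the lower bound, I would first exploit the pathwise comparison for the one-dimensional SDE \eqref{phi}: with the common noise $B_1$, $u\mapsto\varphi_u(t)$ is monotone, so $\varphi_u(t)\geq \varphi_0(t)$ for every $u\geq 0$. The ergodic theorem for $\varphi_0$ combined with the identity $\int_0^\infty(f(x,0)-\tfrac12 g^2(x,0))f^*(x)dx=c_2+\lambda$ yields a deterministic $T^*>1$ such that
\[
\PP\Big\{\tfrac1t\int_0^t\big(f(\varphi_0(s),0)-\tfrac12 g^2(\varphi_0(s),0)\big)ds\geq c_2+\tfrac{15\lambda}{16}\;\;\forall t\in[T^*,mT^*]\Big\}\geq 1-\tfrac{\eps^*}{4}.
\]
The assumed monotonicity of $f(s,0)-\tfrac12 g^2(s,0)$ transports this inequality to $\varphi_u$ for every $u\in[0,H^*]$ on the same event, giving a lower bound \emph{uniform} in $u$. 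I would then use Proposition \ref{lem2.2} to pick $\theta>0$ with $\tau_{u,v}^\theta\geq mT^*$ on an event of probability $\geq 1-\tfrac{\eps^*}{4}$, and Proposition \ref{extinction} to shrink $\delta_1^*$ so that $|S_{u,v}(s)-\varphi_u(s)|\leq \nu$ on $[0,mT^*\wedge\tau_{u,v}^\theta]$ on another event of the same probability, where $\nu$ is chosen (via the Lipschitz bounds on $f$ and on $g^2$) so that the integrand in the expansion differs from $f(\varphi_u(s),0)-\tfrac12 g^2(\varphi_u(s),0)$ by at most $\lambda/16$ in average. Finally, Lemma \ref{lem2.5} bounds $|M(t)|\leq M(\eps^*)t^{2/3}$ on $[1,mT^*]$ with probability $\geq 1-\tfrac{\eps^*}{4}$, and I would enlarge $T^*$ if needed so that $M(\eps^*)(mT^*)^{2/3}\leq \tfrac{\lambda T^*}{16}$. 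Intersecting the four events delivers
\[
\ln I_{u,v}(t)-\ln v\geq -c_2 t+\Big(c_2+\tfrac{15\lambda}{16}-\tfrac{\lambda}{16}-\tfrac{\lambda}{16}\Big)t\geq \tfrac{3\lambda t}{4}\quad\forall t\in[T^*,mT^*].
\]

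For the upper bound, on the same events $S_{u,v}(s)\leq \varphi_u(s)+\nu$ remains below some deterministic constant $\bar H$ on $[0,mT^*]$ by Lemma \ref{lem2.1}(ii). Using the crude estimates $f(s,i)\leq Fs\leq F\bar H$ and $|g|\leq K$ together with Lemma \ref{lem2.5}, the same It\^o expansion yields $\ln I_{u,v}(t)\leq \ln v+\bar C\,mT^*$ on $[0,mT^*]$ for an explicit constant $\bar C=\bar C(F,\bar H,K,\eps^*)$. Shrinking $\delta_1^*$ further so that $\ln\delta_1^*+\bar C\,mT^*<0$ forces $\ln I_{u,v}(t)<0$ throughout the interval.

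The principal obstacle is producing an ergodic average estimate valid uniformly for every $u\in[0,H^*]$, since $T^*$ must be chosen independently of the initial susceptible population $u$. This is exactly what the pathwise comparison $\varphi_0\leq \varphi_u$ together with the monotonicity hypothesis on $f(s,0)-\tfrac12 g^2(s,0)$ delivers: it reduces the required uniform convergence to a single application of the ergodic theorem for $\varphi_0$. Without this hypothesis one would need a quantitative rate of convergence to stationarity uniform on the compact set $[0,H^*]$, which is technically more delicate.
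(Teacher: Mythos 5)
Your proposal is correct and follows essentially the same route as the paper: ergodic averaging along $\varphi_0$, transferred to $\varphi_u$ uniformly over $u\in[0,H^*]$ via the pathwise comparison $\varphi_0\leq\varphi_u$ and the monotonicity of $f(s,0)-\tfrac12 g^2(s,0)$, combined with Propositions \ref{lem2.2} and \ref{extinction} and the $Mt^{2/3}$ martingale bound of Lemma \ref{lem2.5}, with $T^*$ enlarged so the martingale term is dominated by a small multiple of $\lambda t$. The only cosmetic difference is the upper bound $\ln I_{u,v}(t)<0$, which the paper reads off for free as $\ln I_{u,v}(t)\leq\ln\theta<0$ on the event $\{\tau_{u,v}^{\theta}\geq mT^*\}$ with $\theta<1$, rather than re-deriving it from the It\^o expansion and a further shrinking of $\delta_1^*$.
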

\begin{proof} It is similar to the  proof of Proposition \ref{lm3.1}, we deduce from 
the ergodicity of $\varphi_0$ that
 there exists $T_1^*$, such that $\mathbb{P}(\Omega_1^*)\geq 1-\dfrac{\eps^*}{4}$, where
$$ \Omega_1^*=\left\{-c_2+\dfrac1t \int_0^t f(\varphi_{0}(s),0)ds-\dfrac1{2t} \int_0^tg^2(\varphi_{0}(s),0)ds  \geq \dfrac{26\lambda}{28},\;\forall t\geq T_1^*\right\}.$$
By Lemma \ref{lem2.5}, there exists $M=M(\eps^*)>0$ such that $\PP(\Omega_2^*)\geq 1-\dfrac{\eps^*}{4}$, where 
$$\Omega_2^*=\left\{\abs{\sigma_2B_2(t)+\int_0^t g(S_{u,v}(s),I_{u,v}(s))dB_3(s)}\leq Mt^{\frac23} ,\;\forall t\geq 1\right\}.$$
Let $T^*>\max\left\{1, T_1^*,\frac{28^3M^3}{\lambda^3},\frac{12^3M^3m^2}{\lambda^3},\frac{M^3m^2}{c_2^3}\right\}$  be a constant satisfying
$$\exp\Big\{-\frac{\min\{c_2,\frac{2\lambda}{3}\}\lambda T^*}{8(\sigma_2^2+K^2)}\Big\}\leq \eps^*.$$
By Lipschitz continuity, there exists $\nu^*>0$ such that if $ |s_1-s_2|\leq\nu^*$ and $\abs{i_1-i_2}<\nu^*$ then $\abs{f(s_1,i_1)-f(s_2,i_2)}\leq\dfrac{\lambda}{28}$ and $\abs{ g^2(s_1,i_1)- g^2(s_2,i_2)}\leq\dfrac{\lambda}{14}$. 
By Proposition \ref{extinction}, we can show that for $\nu^*$ chosen as above there exists $0<\theta<\min\{1, \nu^*\}$ such that $\forall (u,v)\in [0,H^*]\times(0, \theta],$ $\PP(\Omega_3^*)\geq 1-\dfrac{\eps^*}{4}$, where
$$\Omega_3^*=\big\{ \abs{S_{u,v}(t)-\varphi_u(t)}\leq \nu^* \;\forall 0\leq t \leq mT^*\wedge \tau_{u,v}^{\theta}\big\}\;\text{and}\;\tau_{u,v}^{\theta}=\inf\{t\geq 0: I_{u,v}(t)>\theta\}.$$
By Proposition \ref{lem2.2}, it can be shown that there exists $\delta_1^*\in (0,\theta)$ so that $\forall (u,v)\in [0,H^*]\times(0, \delta_1^*],$ $\PP(\Omega_4^*)\geq 1-\dfrac{\eps^*}{4}$, where
$$\Omega_4^*=\left\{\tau_{u,v}^{\theta}\geq mT^*\right\}.$$
From the equation \eqref{e2.1} and Ito's formula, by using a similar arguments in processing of getting \eqref {lni2} we obtain that $\forall (u,v)\in [0,H^*]\times(0, \delta_1^*],\;\omega \in \cap_{i=1}^4\Omega_i^*$ and  $T^*\leq t \leq mT^*$
\begin{equation*}
\begin{aligned}
0>\ln \theta\geq \ln I_{u,v}(t) \geq \ln v + \dfrac{22\lambda}{28}t-Mt^{\frac23} \geq \ln v + \dfrac{3\lambda}{4}t.
\end{aligned}
\end{equation*}
The proof is completed.
\end{proof}
\begin{prop}\label{prop2.3} Assume the assumptions in Theorem \ref{R>1} hold and let $T^*$ be  as in Lemma \ref{lem2.6}. Then, there exists a constant $K_3$ such that
  \begin{equation*}
    \E[\ln^- I_{u,v}(t)]^2\leq (\ln^- v)^2-\lambda t\ln^- v+K_3 t^2
  \end{equation*}
  for any $(u,v)\in [0,H^*]\times (0, \infty),\; t\in[T^*, mT^*].$
\end{prop}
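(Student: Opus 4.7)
The plan is to split the argument into two regimes according to whether $v\le \delta_1^*$ or $v>\delta_1^*$, and to combine the uniform moment bound of Lemma~\ref{lem2.4} with the Lyapunov-exponent lower bound from Lemma~\ref{lem2.6}. In the small-$v$ regime, the lower bound will produce a drift proportional to $-(\ln^- v)\,t$ from expanding $(\ln^- v-\tfrac{3\lambda t}{4})^2$, and the inequality $\tfrac{3\lambda}{2}(1-\eps^*)-K_1\sqrt{\eps^*}>\lambda$ in \eqref{e4.4} is designed precisely so that the $\sqrt{\eps^*}$-error coming from the complementary event cannot destroy this drift. In the large-$v$ regime, $\ln^- v$ is bounded a priori by $-\ln\delta_1^*$, and the conclusion reduces to enlarging $K_3$.

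For $(u,v)\in[0,H^*]\times(0,\delta_1^*]$, let $E$ denote the event furnished by Lemma~\ref{lem2.6}, so $\PP(E)\ge 1-\eps^*$ and on $E$ one has $0<\ln^- I_{u,v}(t)\le \ln^- v-\tfrac{3\lambda t}{4}$ for every $t\in[T^*,mT^*]$; note that the right-hand side is automatically strictly positive on $E$ because it dominates a strictly positive quantity. Squaring and taking expectation against $\1_E$, using $\PP(E)\ge 1-\eps^*$ on the (negative) cross-term and $\PP(E)\le 1$ on the two positive terms, gives a bound on $\E[(\ln^- I_{u,v}(t))^2 \1_E]$. Applying Lemma~\ref{lem2.4} with $A=E^c$ to the complement and summing yields
\begin{equation*}
\E[(\ln^- I_{u,v}(t))^2]\le (\ln^- v)^2+\bigl[-\tfrac{3\lambda}{2}(1-\eps^*)+K_1\sqrt{\eps^*}\bigr]t\ln^- v+\bigl[\tfrac{9\lambda^2}{16}+K_2\sqrt{\eps^*}\bigr]t^2,
\end{equation*}
since $\PP(E)+\PP(E^c)=1$ collapses the two $(\ln^- v)^2$-contributions. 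By the first inequality in \eqref{e4.4}, the coefficient of $t\ln^- v$ is strictly less than $-\lambda$, so this regime is handled for any $K_3\ge\tfrac{9\lambda^2}{16}+K_2\sqrt{\eps^*}$.

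For $v>\delta_1^*$, one has $\ln^- v\le -\ln\delta_1^*$, and Lemma~\ref{lem2.4} with $A=\Omega$ gives $\E[(\ln^- I_{u,v}(t))^2]\le(\ln^- v)^2+K_1(\ln^- v)t+K_2 t^2$. Writing $K_1(\ln^- v)t=-\lambda t\ln^- v+(K_1+\lambda)(\ln^- v)t$ and bounding $(K_1+\lambda)(\ln^- v)t\le(K_1+\lambda)(-\ln\delta_1^*)t\le\tfrac{(K_1+\lambda)(-\ln\delta_1^*)}{T^*}\,t^2$ (valid since $t\ge T^*$) absorbs the excess term into $K_3 t^2$ after enlarging $K_3$ accordingly. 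Taking $K_3$ to be the maximum of the constants produced in the two regimes completes the argument. The main technical point requiring care is verifying that the event $E$ itself forces $\ln^- v-\tfrac{3\lambda t}{4}>0$, so that squaring preserves the inequality, and that the slack in \eqref{e4.4} is tight enough to upgrade the coefficient $-\tfrac{3\lambda}{2}$ to $-\lambda$ once the $\sqrt{\eps^*}$ perturbation coming from $E^c$ is absorbed.
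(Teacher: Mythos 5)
Your proposal is correct and follows essentially the same route as the paper's proof: the same split at $v=\delta_1^*$, the same combination of Lemma~\ref{lem2.6} on the good event with Lemma~\ref{lem2.4} on its complement, the same use of the first inequality in \eqref{e4.4} to upgrade the cross-term coefficient to $-\lambda$, and the same absorption of the bounded $\ln^- v$ terms into $K_3t^2$ in the large-$v$ regime. Your constants are in fact slightly sharper (e.g.\ $\tfrac{9\lambda^2}{16}$ for the square of $\tfrac{3\lambda t}{4}$, and retaining the factor $\sqrt{\eps^*}$ on $K_2$), but this changes nothing of substance.
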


\begin{proof}
  First, consider $v\in (0,\delta_1^*]$ where $\delta_1^*$ as in Lemma \ref{lem2.6} and $0\leq u\leq H^*.$  By Lemma \ref{lem2.6}, we obtain $\PP(\bar\Omega_{u,v})\geq1-\eps^*$ where
  $$\bar \Omega_{u,v}=\big\{\ln v+\dfrac{3\lambda t}{4}\leq \ln I_{u,v}(t)<0\;\;\;\forall\, t\in[T^*, mT^*]\big\}.$$
  Hence, in $\bar \Omega_{u,v}$ we have
  $$0\leq \ln^- I_{u,v}(t)\leq \ln^- v-\frac{3\lambda t}{4}\;\;\forall\, t\in[T^*, mT^*].$$
  As a result,
  $$[\ln^- I_{u,v}(t)]^2\leq (\ln^- v)^2-\frac{3\lambda t}2\ln^- v+\frac{9\lambda^2 t^2}{4}\;\;\forall\, t\in[T^*, mT^*],$$
  which implies that
  \begin{equation}\label{e4.8}
    \E\left[(\ln^- I_{u,v}(t))^2\1_{\bar\Omega_{u,v}}\right]\leq\PP(\bar\Omega_{u,v})(\ln^- v)^2-\frac{3\lambda t}{2}\PP(\bar\Omega_{u,v})\ln^- v+\frac{9\lambda^2 t^2}{4}\PP(\bar\Omega_{u,v}).
  \end{equation}
 In $\bar\Omega_{u,v}^c=\Omega\backslash \bar\Omega_{u,v},$ it follows from Lemma \ref{lem2.4} that
 \begin{equation}\label{e4.9}
    \E\left[\ln^- I_{u,v}(t)]^2\1_{\bar\Omega^c_{u,v}}\right]\leq\PP(\bar\Omega^c_{u,v})(\ln^- v)^2+K_1t \sqrt{\PP(\bar\Omega^c_{u,v})}\ln^- v+K_2t^2 \sqrt{\PP(\bar\Omega^c_{u,v})}.
 \end{equation}
 Adding \eqref{e4.8} and \eqref{e4.9} side by side obtains
  \begin{equation*}
    \E[\ln^- I_{u,v}(t)]^2\leq(\ln^- v)^2-\Big(\frac{3\lambda}2(1-\eps^*)-K_1\sqrt{\eps^*}\Big)t\ln^- v+\Big(\frac{9\lambda^2}{4}+K_2\Big)t^2 .
 \end{equation*}
 In view of \eqref{e4.4} we deduce that
  \begin{equation*}
    \E[\ln^- I_{u,v}(t)]^2\leq(\ln^- v)^2-\lambda t\ln^- v+\Big(\frac{9\lambda^2}{4}+K_2\Big)t^2 .
 \end{equation*}
 Now, for $v\in [\delta_1^*, \infty)$ and $0\leq u\leq H^*,$ it follows from Lemma \ref{lem2.4} that
 \begin{align*}
    \E[\ln^- I_{u,v}(t)]^2&\leq(\ln^- v)^2+K_1 t\ln^- v+K_2t^2
   \\&\leq |\ln\delta_1^*|^2+K_1 t|\ln\delta_1^*|+K_2t^2.
 \end{align*}
 Letting $K_3$ sufficiently large such that $K_3>\frac{9\lambda^2}{4}+K_2$ and
 $|\ln\delta_1^*|^2+(K_1+\lambda) t|\ln\delta_1^*|+K_2t^2\leq K_3t^2\;\;\forall t\in [T^*, mT^*]$,
 we obtain the desired result.
\end{proof}
\begin{prop}\label{prop2.4} Assume the assumptions in Theorem \ref{R>1} holds and let $T^*$ as in Lemma \ref{lem2.6}. There exists $K_5>0$  such that
  $$\E[\ln^- I_{u,v}(mT^*)]^2\leq (\ln^- v)^2-\frac{\lambda T^*}2\ln^- v+K_5 {T^*}^2,$$
  for any $v\in(0,\infty), u>H^*.$
\end{prop}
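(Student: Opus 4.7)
The plan is to reduce Proposition \ref{prop2.4} to Proposition \ref{prop2.3} via the strong Markov property, exploiting the fact that while $S_{u,v}$ stays above $H^*$ the log-drift of $I_{u,v}$ is at least $\tfrac{3\lambda}{4}$. Set the stopping time $\tau := \inf\{t\geq 0 : S_{u,v}(t)\leq H^*\}$ and split $\Omega$ according to whether $\tau \leq (m-1)T^*$. The case $v\geq 1$ is trivial since then $\ln^- v = 0$ and Lemma \ref{lem2.4} alone gives $\E[(\ln^- I_{u,v}(mT^*))^2]\leq K_2(mT^*)^2$, so from now on I focus on the nontrivial regime.

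On $\{\tau \leq (m-1)T^*\}$, continuity gives $S_{u,v}(\tau) = H^*$, and the remaining horizon $mT^* - \tau$ lies in $[T^*, mT^*]$. By the strong Markov property and Proposition \ref{prop2.3} applied with the shifted initial point $(S_{u,v}(\tau), I_{u,v}(\tau))$,
$$\E\bigl[(\ln^- I_{u,v}(mT^*))^2\mid\F_\tau\bigr] \leq (\ln^- I_{u,v}(\tau))^2 - \lambda(mT^*-\tau)\ln^- I_{u,v}(\tau) + K_3(mT^*)^2.$$
The moments of $\ln^- I_{u,v}(\tau)$ on this event are controlled by a Doob-type maximal extension of Lemma \ref{lem2.4} up to time $(m-1)T^*$, and using $mT^*-\tau \geq T^*$ yields a contribution of the form $(\ln^- v)^2 - cT^*\ln^- v + O((T^*)^2)$ with $c$ of order $\lambda$, provided $m$ is taken large enough to dominate the positive $K_1$-coefficient produced by the maximal inequality.

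On $\{\tau > (m-1)T^*\}$, $S_{u,v}(t) > H^*$ throughout $[0,(m-1)T^*]$, and the monotonicity assumption on $f(s,0)-\tfrac12 g^2(s,0)$ together with the choice of $H^*$ gives $f(S_{u,v}(t),0)-\tfrac12 g^2(S_{u,v}(t),0) \geq c_2 + \tfrac{3\lambda}{4}$ on this interval. Combining this with Lipschitz estimates for $f(S,I)-f(S,0)$ and $g^2(S,I)-g^2(S,0)$ (handled by introducing an auxiliary stopping time for $I_{u,v}$ leaving a small threshold $\theta$, exactly as in the proof of Lemma \ref{lem2.6}, and noting that at such an exit time $\ln^- I$ is already zero so Lemma \ref{lem2.4} restarted there is harmless), and applying Lemma \ref{lem2.5} to the martingale $\sigma_2 B_2(t) + \int_0^t g(S_{u,v},I_{u,v})\,dB_3$, one obtains on a subevent of probability at least $1-\eps^*$ that
$$\ln I_{u,v}((m-1)T^*) \geq \ln v + \tfrac{\lambda(m-1)T^*}{2}, \qquad\text{hence}\qquad \ln^- I_{u,v}((m-1)T^*) \leq \bigl(\ln^- v - \tfrac{\lambda(m-1)T^*}{2}\bigr)^{+}.$$
The Markov property at $(m-1)T^*$ and Lemma \ref{lem2.4} applied to the remaining length-$T^*$ interval then produce a bound whose coefficient of $\ln^- v$ is $[K_1-\lambda(m-1)]T^*$; this is at most $-\tfrac{\lambda T^*}{2}$ once $m$ is chosen so that $\lambda(m-1)\geq K_1 + \tfrac{\lambda}{2}$, strengthening the defining inequality $m-1>12c_2/\lambda$ if necessary.

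The main obstacle is the bookkeeping of the low-probability complements, on which the $\sqrt{\PP(A)}$ factor in Lemma \ref{lem2.4} must be made small enough (via Lemma \ref{lem2.5} and the choice of $\eps^*$, analogously to \eqref{e4.4}) so that the spurious $(\ln^- v)^2$ mass from those events is absorbed into $K_5(T^*)^2$ rather than corrupting the leading $\ln^- v$ coefficient. Collecting the two cases and choosing $K_5$ large enough to dominate all the numerical constants yields the stated inequality.
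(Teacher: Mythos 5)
Your overall architecture matches the paper's: split on whether $S_{u,v}$ reaches $H^*$ before $(m-1)T^*$ (the paper's stopping time $\xi^*_{u,v}$), restart at the hitting time via the strong Markov property and Proposition \ref{prop2.3}, use the drift advantage $f(s,0)-\frac12 g^2(s,0)\geq c_2+\frac{3\lambda}{4}$ for $s>H^*$ on the non-hitting event, and absorb the low-probability complements through the $\sqrt{\PP(A)}$ factor of Lemma \ref{lem2.4}. The non-hitting case and the exceptional-set bookkeeping are essentially sound (your variant restarts at $(m-1)T^*$ and pays an extra $K_1T^*\ln^- v$, which can indeed be beaten by enlarging $m$).

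The gap is in the hitting case. After conditioning at $\tau$, Proposition \ref{prop2.3} gives $(\ln^- I_{u,v}(\tau))^2-\lambda(mT^*-\tau)\ln^- I_{u,v}(\tau)+K_3(mT^*)^2$, and to recover a \emph{negative} coefficient of $\ln^- v$ you need $\ln^- I_{u,v}(\tau)\leq \ln^- v+O(\lambda T^*)$ \emph{pathwise with high probability}, i.e.\ an additive slack small compared with $\lambda T^*$. A ``Doob-type maximal extension of Lemma \ref{lem2.4}'' only yields moment bounds of the form $\E[\sup_t(\ln^- I_{u,v}(t))^2]\leq(\ln^- v)^2+K_1(m-1)T^*\ln^- v+\cdots$, whose slack is of order $mT^*$ with constant $c_2+K^2/2$ (not $\lambda$) and which carries a \emph{positive} sign in front of $\ln^- v$; the compensating term $-\lambda(mT^*-\tau)\ln^- I_{u,v}(\tau)$ vanishes precisely when $\ln^- I_{u,v}(\tau)$ is small, so it cannot cancel this. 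Worse, taking $m$ larger inflates the spurious $+K_1(m-1)T^*\ln^- v$ term, so your proposed fix goes in the wrong direction. The paper closes exactly this hole with the event $\Omega_8^*$ and the auxiliary sets $\Omega_1^{**}$--$\Omega_4^{**}$: using that $S_{u,v}>H^*$ up to $\xi^*$ (so the drift of $-\ln I$ is at most $-\frac{2\lambda}{3}$ while $I\le\theta^*$, with a separate branch when $\int_0^{\xi^*}(f-\frac12 g^2)$ is large) together with an exponential martingale inequality calibrated to give the bound $\min\{c_2,\frac{2\lambda}{3}\}t+\frac{\lambda T^*}{8}$, it shows $-\ln I_{u,v}(\xi^*)\leq-\ln v+\frac{\lambda T^*}{8}$ off a set of probability at most $3\eps^*$. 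Your sketch contains no substitute for this estimate, and without it the hitting case does not close.
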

\begin{proof}
Let $\theta^*\in (0,1)$ be a constant that satisfies $\theta^*<\dfrac{\lambda}{12(F+KG)}$. 
By a similar argument to the proof of Proposition \ref{lem2.2}, there exists $\delta_1>0$ such that $\forall (u,v) \in (H^*,\infty)\times (0,\delta_1], A\in \F$
\begin{align*}
&\PP\bigg(\Omega_5^*\cap \left\{\int_0^{mT^*}\Big(f(S_{u,v}(s),I_{u,v}(s))-\dfrac 12g^2(S_{u,v}(s),I_{u,v}(s))\Big)ds\leq \Big(2c_2+\dfrac{(m-1)\lambda}2\Big)T^*\right\}\cap A\bigg)
\\&\geq \PP\bigg(\left\{\int_0^{mT^*}\Big(f(S_{u,v}(s),I_{u,v}(s))-\dfrac 12g^2(S_{u,v}(s),I_{u,v}(s))\Big)ds\leq \Big(2c_2+\dfrac{(m-1)\lambda}2\Big)T^*\right\}\cap A\bigg)-\eps^*,
\end{align*}
 where
$$\Omega_5^*=\{ \tau_{u,v}^{\theta^*} \geq mT^*\}\;\text{and}\;\tau_{u,v}^{\theta^*}=\inf\{t\geq 0: I_{u,v}(t)>\theta^*\}.$$
Now, consider $v\leq\delta_2^{*}:=\min \Big\{1,\delta_1,\theta^*\exp\big\{-1-(3c_2+M)mT^*-\frac{(m-1)\lambda T^*}{2}\big\}\Big\}$, $u>H^*$. Define the stopping time $\xi^*_{u,v}:=(m-1)T^*\wedge\inf\{t>0: S_{u,v}(t)\leq H^*\}$
and the following sets
\begin{equation*}
\begin{aligned}
&\Omega_6^*=\Omega_2^*\cap \{\xi_{u,v}^*=(m-1) T^*\}
\\&\bigcap \left\{\int_0^{mT^*} \Big(f(S_{u,v}(s),I_{u,v}(s))-\dfrac 12g^2(S_{u,v}(s),I_{u,v}(s))\Big)ds\leq (2c_2+\dfrac{(m-1)\lambda}2)T^*\right\}\cap \Omega_5^*,\\
&\Omega_7^*=\Omega_2^*\cap \{\xi_{u,v}^*= (m-1)T^*\}
\\&\bigcap \left\{\int_0^{mT^*} \Big(f(S_{u,v}(s),I_{u,v}(s)-\dfrac 12g^2(S_{u,v}(s),I_{u,v}(s))\Big)ds> (2c_2+\dfrac{(m-1)\lambda}2)T^*\right\},
\\
&\Omega_8^*=\{-\ln I_{u,v}(\xi_{u,v}^*)\leq -\ln v+\frac{\lambda T^*}8\}\cap\{\xi_{u,v}^*<(m-1)T^*\}; 
\Omega_{9}^*=\Omega\backslash(\Omega_6^*\cup\Omega_7^*\cup \Omega_8^*),
\end{aligned}
\end{equation*}
where $\Omega_2^*$ as in Lemma \ref{lem2.6}.
Our idea in this Proposition is to estimate $[\ln^- I_{u,v}(mT^*)]^2$ in each set $\Omega_6^*,\Omega_7^*,\Omega_8^*,\Omega_9^*$ by using Lemma \ref{lem2.4}.
First, for all $\omega\in \Omega_6^*,$ we have
\begin{equation*}
\begin{aligned}
&-\ln I_{u,v}(mT^*)=-\ln v-\int_0^{mT^*}\big[-c_2+f(S_{u,v}(s),I_{u,v}(s))-\dfrac 12 g^2(S_{u,v}(s),I_{u,v}(s))\big]ds
\\&\quad-\sigma_2 B_2(mT^*)-\int_0^{mT^*}g(S_{u,v}(s),I_{u,v}(s))dB_3(s)
  \\&\leq-\ln v+mc_2T^*
\\&\;\;\;\;\;-\int_0^{(m-1)T^*}\Big[f(S_{u,v}(s),I_{u,v}(s))-\dfrac 12 g^2(S_{u,v}(s),I_{u,v}(s))-f(S_{u,v}(s),0)+\dfrac 12 g^2(S_{u,v}(s),0)\Big]ds
\\&\;\;\;\;-\int_0^{(m-1)T^*}\Big(f(S_{u,v}(s),0)-\dfrac 12 g^2(S_{u,v}(s),0)\Big)ds+M(mT^*)^{\frac 23}
  \\&\leq-\ln v+mc_2T^*+(F+KG)\theta^* (m-1)T^*-(m-1)T^*(c_2+\dfrac {3\lambda}{4})+M(mT^*)^{\frac 23}
\\&\leq-\ln v-(m-1)T^*\Big(c_2+\dfrac {3\lambda}4-\dfrac m{m-1}c_2-\dfrac{Mm^{\frac 23}}{(T^*)^{\frac 13}}-(F+KG)\theta^*\Big)
\\&\leq -\ln v-\frac{(m-1)\lambda T^*}2\;\;\;\text{by the choice of}\;m,T^*,\theta^*.
\end{aligned}
\end{equation*}
Note that if $v\leq \delta_2^{*}$ then $-\ln v-\frac{(m-1)\lambda T^*}{2}>0$. Therefore
$$[\ln^- I_{u,v}(mT^*)]\leq -\frac{(m-1)\lambda T^*}{2}+\ln^- v.$$
By squaring and then multiplying by $\1_{\Omega_6^*}$ and taking expectation both sides, we yield
\begin{equation}\label{e4.11}
  \E\left( [\ln^- I_{u,v}(mT^*)]^2\1_{\Omega_6^*}\right)\leq (\ln^- v)^2\PP(\Omega_6^*)-(m-1)\lambda T^*\ln^- v\PP(\Omega_6^*)+\frac{(m-1)^2\lambda^2 {T^*}^2}{4}.
\end{equation}
Secondly, for all $\omega \in \Omega_7^*$, we also have
\begin{align*}
  -\ln &I_{u,v}(mT^*)=-\ln v-\int_0^{mT^*}\big[-c_2+f(S_{u,v}(s),I_{u,v}(s))-\dfrac 12 g^2(S_{u,v}(s),I_{u,v}(s))\big]ds
\\&\;\;\;\;\;-\sigma_2 B_2(mT^*)-\int_0^{mT^*}g(S_{u,v}(s),I_{u,v}(s))dB_3(s)
  \\&\leq-\ln v-\big(c_2+\dfrac{(m-1)\lambda}2\big)T^*+M(mT^*)^{\frac 23}\leq  -\ln v-\frac{(m-1)\lambda T^*}2.
\end{align*}
Therefore
\begin{equation}\label{e4.12}
  \E\left( [\ln^- I_{u,v}(mT^*)]^2\1_{\Omega_7^*}\right)\leq (\ln^- v)^2\PP(\Omega_7^*)-(m-1)\lambda T^*\ln^- v\PP(\Omega_7^*)+\frac{(m-1)^2\lambda^2 {T^*}^2}{4}.
\end{equation}
Thirdly, we will estimate $\PP(\Omega_8^*)$. Define the following sets, which help us in estimating $\PP(\Omega_8^*)$
\begin{align*}\Omega_1^{**}&=\{ \tau_{u,v}^{\theta^*} \geq \xi_{u,v}^*\},\\
\Omega_2^{**}&=\left\{-\sigma_2B_2(t)-\int_0^t g(S_{u,v}(s),I_{u,v}(s))ds \leq \min \{c_2,\dfrac {2\lambda}3\}t + \dfrac {\lambda T^*}8\;\;\forall t\geq 0\right\},\\
\Omega_3^{**}&=\Big\{\int_0^{\xi_{u,v}^*}\Big(f(S_{u,v}(s),I_{u,v}(s))-\dfrac 12g^2(S_{u,v}(s),I_{u,v}(s))\Big)ds\leq 2c_2mT^*\Big\}\cap \Omega_1^{**}\cap \Omega_2^{**},
\\
\Omega_4^{**}&=\Big\{\int_0^{\xi_{u,v}^*} \Big(f(S_{u,v}(s),I_{u,v}(s))-\dfrac 12g^2(S_{u,v}(s),I_{u,v}(s))\Big)ds> 2c_2mT^*\Big\}\cap \Omega_2^{**}.
\end{align*}
For all $\omega \in \Omega_3^{**}$, we have
\begin{equation*}
\begin{aligned}
  -&\ln I_{u,v}(\xi_{u,v}^*)\leq-\ln v+c_2\xi_{u,v}^*
\\&\;-\int_0^{\xi_{u,v}^*}\Big(f(S_{u,v}(s),I_{u,v}(s))-\dfrac 12 g^2(S_{u,v}(s),I_{u,v}(s))-f(S_{u,v}(s),0)+\dfrac 12 g^2(S_{u,v}(s),0)\Big)ds
\\&\;-\int_0^{\xi_{u,v}^*}\Big(f(S_{u,v}(s),0)-\dfrac 12 g^2(S_{u,v}(s),0)\Big)ds-\sigma_2 B_2(\xi_{u,v}^*)-\int_0^{\xi_{u,v}^*}g(S_{u,v}(s),I_{u,v}(s))dB_3(s)
\\&\leq-\ln v-\xi_{u,v}^*\Big(\dfrac {3\lambda}4-(F+KG)\theta^*\Big)-\sigma_2 B_2(\xi_{u,v}^*)-\int_0^{\xi_{u,v}^*}g(S_{u,v}(s),I_{u,v}(s))dB_3(s)
\\&\leq-\ln v-\xi_{u,v}^*\dfrac {2\lambda}3+\min\big\{c_2,\frac {2\lambda}3\big\}\xi_{u,v}^*+\dfrac{\lambda T^*}{8}
\leq -\ln v+\frac{\lambda T^*}8.
\end{aligned}
\end{equation*}
On the other hand, for all $\omega \in \Omega_4^{**}$ we have
\begin{equation*}
\begin{aligned}
  -\ln I_{u,v}&(\xi_{u,v}^*)=-\ln v-\int_0^{\xi_{u,v}^*}\big[-c_2+f(S_{u,v}(s),I_{u,v}(s))-\dfrac 12 g^2(S_{u,v}(s),I_{u,v}(s))\big]ds
\\&\quad-\sigma_2 B_2(\xi_{u,v}^*)-\int_0^{\xi^*_{u,v}}g(S_{u,v}(s),I_{u,v}(s))dB_3(s)
 \\& \leq-\ln v-c_2\xi_{u,v}^*+\min\big\{c_2,\frac {2\lambda}3\big\}\xi_{u,v}^*+\dfrac{\lambda T^*}{8}
\leq -\ln v+\frac{\lambda T^*}8.
\end{aligned}
\end{equation*}
As a consequence, 
\begin{equation}\label{e4.13}
\Big(\big(\Omega_3^{**}\cup \Omega_4^{**}\big)\cap\{\xi_{u,v}^*<(m-1)T^*\} \Big)\subset \Omega_8^*.
\end{equation}
By a similar way in processing to prove  Proposition \ref{lem2.2}, we obtain that $\forall (u,v)\in (H^*,\infty)\times(0,\delta_2^*]$
\begin{equation}
\begin{aligned}\label{e4.14}
\PP&\Big(\Omega_1^{**}\cap \Big\{\int_0^{\xi_{u,v}^*}\Big(f(S_{u,v}(s),I_{u,v}(s))-\dfrac 12g^2(S_{u,v}(s),I_{u,v}(s))\Big)ds\leq 2c_2mT^*\Big\}\Big)
\\&\geq \PP\Big\{\int_0^{\xi_{u,v}^*}\Big(f(S_{u,v}(s),I_{u,v}(s))-\dfrac 12g^2(S_{u,v}(s),I_{u,v}(s))\Big)ds\leq 2c_2mT^*\Big\}-\eps^*.
\end{aligned}
\end{equation}
Moreover, it is obvious that 
$$\dfrac {\min\{c_2,\frac {2\lambda}3\}}{2(\sigma_2^2+K^2)} \Big(\sigma_2^2t+\int_0^t g^2(S_{u,v}(s),I_{u,v}(s))ds\Big)+\frac{\lambda T^*}8<\min \Big\{c_2,\frac {2\lambda}3\Big\}t + \frac {\lambda T^*}8.$$ 
Therefore, the exponential martingale inequality \cite[Theorem 7.4, p. 44]{MAO} implies that
\begin{equation*}
\begin{aligned}
\PP&\Bigg\{ -\sigma_2B_2(t)-\int_0^t g(S_{u,v}(s),I_{u,v}(s))dB_3(s)\leq\min \Big\{c_2,\frac {2\lambda}3\Big\}t + \frac {\lambda T^*}8\;\forall t\geq 0\Bigg\}
\\&\quad\geq 1-\exp \Big\{-\frac{\lambda T^* \min\{ c_2,\frac {2\lambda}{3}\}}{8(\sigma_2^2+K^2)}\Big\}\geq 1-\eps^*.
\end{aligned}
\end{equation*}
That means $\PP(\Omega_2^{**})\geq 1-\eps^*.$ Therefore, we obtain from definition of $\Omega_3^{**}$ and \eqref{e4.14} that
\begin{equation}\label{e4.15}
\begin{aligned}
\PP&(\Omega_3^{**})\geq \PP\left(\left\{\int_0^{\xi_{u,v}^*}\Big(f(S_{u,v}(s),I_{u,v}(s))-\dfrac 12g^2(S_{u,v}(s),I_{u,v}(s))\Big)ds\leq \Big(2c_2+\dfrac{(m-1)\lambda}2\Big)T^*\right\}\cap \Omega_1^{**}\right)
\\&\quad\quad\quad\quad+\PP(\Omega_2^{**})-1
\\&\geq \PP\left\{\int_0^{\xi_{u,v}^*}\Big(f(S_{u,v}(s),I_{u,v}(s))-\dfrac 12g^2(S_{u,v}(s),I_{u,v}(s))\Big)ds\leq \Big(2c_2+\dfrac{(m-1)\lambda}2\Big)T^*\right\}-2\eps^*.
\end{aligned}
\end{equation}
On the other hand, by definition of $\Omega_4^{**}$ and the property of $\Omega_2^{**}$, we get
\begin{equation}\label{e4.16}
\begin{aligned}
\PP(\Omega_4^{**})\geq \PP\left\{\int_0^{\xi_{u,v}^*}\Big(f(S_{u,v}(s),I_{u,v}(s))-\dfrac 12g^2(S_{u,v}(s),I_{u,v}(s))\Big)ds> \Big(2c_2+\dfrac{(m-1)\lambda}2\Big)T^*\right\}-\eps^*.
\end{aligned}
\end{equation}
Thus, by the disjointedness of $\Omega_3^{**}$ and $\Omega_4^{**}$, we obtain from \eqref{e4.15} and \eqref{e4.16} that
\begin{equation}\label{e4.17}
\PP\Big(\Omega_3^{**}\cup \Omega_4^{**}\Big)\geq 1-3\eps^*.
\end{equation}
As a consequence of \eqref{e4.13} and \eqref{e4.17}, one has
$$\PP(\Omega_8^*)\geq \PP\{\xi_{u,v}^*<(m-1)T^*\}-3\eps^*.$$
In addition, by definition of $\Omega_6^{*},\Omega_7^{*}$, properties of $\Omega_5^*$, $\Omega_2^*$ and some basis computations, we obtain
$$\PP\big(\Omega_6^*\cup \Omega_7^*\big)\geq \PP\big(\Omega_2^*\cap \{\xi_{u,v}^*=(m-1) T^*\}\big)-\eps^*\geq \PP\big(\{\xi_{u,v}^*=(m-1) T^*\}\big)-2\eps^*.$$
Therefore, we have
$$\PP(\Omega_6^*\cup\Omega_7^* \cup\Omega_8^*)\geq 1-5\eps^*,$$
or $\PP(\Omega_9^*)\leq 5\eps^*$.
Let $t<(m-1)T^*$, $u'>0$ and $v'$ satisfy $-\ln v'\leq -\ln v+\frac{\lambda T^*}8\leq0$. By applying Proposition \ref{prop2.3} and the strong Markov property, we can estimate the following conditional expectation
\begin{align*}
\E\Big[[\ln^- I_{u,v}&(mT^*)]^2\Big{|}\xi_{u,v}=t, \;I_{u,v}(\xi_{u,v})=v', S_{u,v}(\xi_{u,v})=u'\Big]
\\&
\leq (\ln^- v')^2-\lambda(mT^*-t)\ln^- v'+K_3(mT^*-t)^2
\\&
\leq (\ln^- v')^2-\lambda T^*\ln^- v'+m^2K_3{T^*}^2
\\&
\leq \big(-\ln v+\frac{\lambda T^*}8\big)^2-\lambda T^*(-\ln v)+m^2K_3{T^*}^2
\\&
\leq \big(-\ln v\big)^2-\big(\lambda T^*-\frac{\lambda T^*}4\big)(-\ln v)+m^2K_3{T^*}^2+\frac{(\lambda T^*)^2}{64}
\\&
\leq (\ln^- v)^2-\frac{3\lambda T^*}4\ln^- v+m^2K_3{T^*}^2+\frac{(\lambda T^*)^2}{64}.
\end{align*}
As a consequence, we get
\begin{equation}\label{e4.18}
  \E\left([\ln^- I_{u,v}(mT^*)]^2\1_{\Omega_8^*}\right)
\leq (\ln^- v)^2\PP(\Omega_8^*)-\dfrac 34\lambda T^*\ln^- v\PP(\Omega_8^*)+m^2K_3{T^*}^2+\frac{(\lambda T^*)^2}{64}.
\end{equation}
In addition, it follows from Lemma \ref{lem2.4} that
\begin{equation}\label{e4.19}
  \E\left([\ln^- I_{u,v}(mT^*)]^2\1_{\Omega_9^*}\right)
\leq (\ln^- v)^2\PP(\Omega_9^*)+K_1\sqrt{\PP(\Omega_9^*)}mT^*\ln^- v+m^2K_2{T^*}^2.
\end{equation}
Adding side by side \eqref{e4.11}, \eqref{e4.12}, \eqref{e4.18}, \eqref{e4.19}, and using \eqref{e4.4}, we have
\begin{align*}
\E\left([\ln^- I_{u,v}(mT^*)]^2\right)
&\leq (\ln^- v)^2-\left ( \dfrac 34\lambda(1-5\eps^*)-mK_1\sqrt{5\eps^*}\right)T^*\ln^- v+K_4{T^*}^2
\notag\\&
\leq
(\ln^- v)^2-\frac{\lambda T^*}{2}\ln^- v+K_4{T^*}^2,\;\text{for some}\;K_4>0.
  \end{align*}
To end this proof, we consider $v\geq \delta_2^{*}$.
It follows from Lemma \ref{lem2.4} again that
$$
\E\left([\ln^- I_{u,v}(mT^*)]^2\right)
\leq\Big((\ln^- v)^2+K_1\ln^- v+K_2^2\Big)m^2{T^*}^2.
 $$
It is noted that when $v\geq \delta_2^{*}$ then $-\ln v\leq -\ln \delta_2^{*}$. It deduces that $\Big((\ln^- v)^2+K_1\ln^- v\Big)$  and $ \Big((\ln^- v)^2-\frac{\lambda T^*}2\ln^- v\Big)$ are bounded. So, there exists a constant $K_5>K_4$ such that
\begin{equation*}
\E\left([\ln^- I_{u,v}(mT^*)]^2\right)\leq (\ln^- v)^2-\frac{\lambda T^*}2\ln^- v+K_5{T^*}^2\;\; \forall u> H^*, \; v\in(0, \infty).
 \end{equation*}
The proof is complete.
 \end{proof}

 \begin{lem}\label{petite}
  Any compact subset $\K$ is petite for the Markov chain $(S_{u,v}(nmT^*), I_{u,v}(nmT^*))$ $(n\in\N)$.
The irreducibility and aperiodicity of $(S_{u,v}(nmT^*), I_{u,v}(nmT^*))$ $(n\in\N)$ is a byproduct (see e.g., \cite{MT,EN} for definitions).
\end{lem}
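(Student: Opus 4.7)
The plan is to show that the sampled chain $X_n:=(S_{u,v}(nmT^*),I_{u,v}(nmT^*))$, $n\in\N$, is a $T$-chain on $\R_+^{2,\circ}$ in the Meyn--Tweedie sense, since for such chains every compact subset is automatically petite and one obtains irreducibility together with aperiodicity of the skeleton as a byproduct. Concretely, it suffices to prove that the one-step kernel $P_{mT^*}((u,v),\cdot)$ is strong Feller and possesses a density which is strictly positive on $\R_+^{2,\circ}\times\R_+^{2,\circ}$; a uniform minorization on compacts then produces the petite-set inequality.

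The first step is to verify non-degeneracy of the diffusion \eqref{e2.1} on $\R_+^{2,\circ}$. The noise matrix
\[
\sigma(s,i)=\begin{pmatrix}\sigma_1 s & 0 & -ig(s,i)\\ 0 & \sigma_2 i & ig(s,i)\end{pmatrix}
\]
yields $\det(\sigma\sigma^\top)\ge \sigma_1^{2}\sigma_2^{2}\,s^{2}i^{2}>0$ on $\R_+^{2,\circ}$, hence the generator is uniformly elliptic on every compact subset of $\R_+^{2,\circ}$. Standard parabolic regularity (H\"ormander/Friedman) then produces a smooth transition density $p_t((u,v),(s,i))$ with respect to Lebesgue measure, and this smoothness combined with the Markov--Feller property established earlier in the paper gives the strong Feller property on $\R_+^{2,\circ}$.

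The second step is irreducibility, obtained through the Stroock--Varadhan support theorem. Since $B_1,B_2,B_3$ are mutually independent, the associated deterministic control system (replacing $dB_j$ by smooth controls $\phi_j\,dt$) is easily shown to be exactly controllable on $\R_+^{2,\circ}$ in time $mT^*$: one uses $\phi_2$ to move $I$ along any prescribed smooth positive path and then $\phi_1$ (together with the drift $a_1-b_1 S$) to drive $S$ to any desired positive target, with $\phi_3$ left free. Consequently the topological support of $P_{mT^*}((u,v),\cdot)$ is all of $\R_+^{2,\circ}$, and combined with ellipticity this gives $p_{mT^*}((u,v),(s,i))>0$ for every $(u,v),(s,i)\in\R_+^{2,\circ}$.

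The third step assembles the petite property. Given any compact $\K\subset\R_+^{2,\circ}$, fix any nonempty open set $U$ with $\overline{U}\subset\R_+^{2,\circ}$ compact; by joint continuity and strict positivity of $p_{mT^*}$, the constant
\[
c:=\inf_{(u,v)\in\K,\ (s,i)\in\overline{U}} p_{mT^*}((u,v),(s,i))
\]
is strictly positive. Taking the sampling distribution $a=\delta_{1}$ on $\N$ and the nontrivial measure $\nu(\cdot):=c\,\mathrm{Leb}(\cdot\cap U)$ gives, for every Borel $B\subset\R_+^{2,\circ}$ and every $(u,v)\in\K$, the minorization $P_{mT^*}((u,v),B)\ge \nu(B)$, which is the definition of petiteness for $\K$. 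Irreducibility of $X_n$ with respect to $\nu$ is immediate, and aperiodicity follows because the single-step kernel already dominates $\nu$ (there is no cyclic decomposition obstruction). The main technical obstacle in the plan is the controllability argument invoked in the support theorem, which must be carried out with some care to guarantee that the control system can be steered between arbitrary pairs of points of $\R_+^{2,\circ}$ in the exact prescribed time $mT^*$ while keeping both components strictly positive; once this is in place, everything else is standard.
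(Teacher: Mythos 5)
Your argument is correct, but note that the paper itself does not carry out any of this: its ``proof'' of Lemma \ref{petite} is a single sentence deferring to \cite[Lemma 2.6]{DANG}. What you have written is essentially the standard self-contained version of the argument that reference relies on: the computation $\det(\sigma\sigma^\top)\ge\sigma_1^2\sigma_2^2 s^2 i^2>0$ correctly establishes local uniform ellipticity on compact subsets of $\R_+^{2,\circ}$ (which the paper also uses, without proof, to claim non-degeneracy in Section \ref{secext}); the controllability step is sound because the controls $\phi_1,\phi_2$ enter with the nowhere-vanishing factors $\sigma_1 s$ and $\sigma_2 i$, so any smooth path staying in $\R_+^{2,\circ}$ can be tracked exactly in time $mT^*$ with $\phi_3$ left free; and the minorization $P_{mT^*}((u,v),\cdot)\ge c\,\mathrm{Leb}(\cdot\cap U)$ on $\K$ is precisely a small-set (hence petite-set) bound, which also yields irreducibility and strong aperiodicity. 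One caveat: invoking H\"ormander's theorem to get a \emph{smooth} density overreaches, since $f$ and $g$ are only assumed Lipschitz; what you actually need, and what is available here, is a jointly continuous, locally strictly positive fundamental solution for a locally uniformly elliptic operator with H\"older (indeed Lipschitz) coefficients, in the sense of Friedman. With that correction the proof stands, and it has the merit of making explicit the verification that the paper outsources to the literature; the trade-off is length versus the paper's one-line citation.
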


\begin{proof}
The proof of this Lemma can be found in \cite[Lemma 2.6]{DANG}.
\end{proof}
\begin{proof}[Proof of Theorem \ref{R>1}]
As in the proof of \cite[Lemma 2.3]{DANG}, the Lemma \ref{lem2.1} implies that
$$\E e^{C_3 t}\big[V_1(S_{u,v}(t),I_{u,v}(t))\big]\leq V_1(u,v)+\dfrac{C_4(e^{C_3t}-1)}{C_3}\;\;\forall t\geq 0.$$
Therefore, there are $P_1>0, P_2>0$  satisfying
\begin{equation}\label{e4.21}
\E V_1\Big(S_{u,v}(mT^*),I_{u,v}(mT^*)\Big)\leq (1-P_1)V_1(u,v)+P_2\,\,\forall (u,v)\in\R^{2,\circ}_+.
\end{equation}
Let $V_*(u,v)=V_1(u,v)+(\ln^- v)^2$. As we introduced in the beginning of this section, Propositions \ref{prop2.3}, \ref{prop2.4} and \eqref{e4.21} allow us to obtain the existences of a compact set $\K\subset \R^{2,\circ}_+$ and two constants $P_1^*>0, P_2^*>0$, which satisfy
\begin{equation}\label{e4.22}
\E V_*(S_{u,v}(mT^*),I_{u,v}(mT^*))\leq V_*(u,v)-P_1^*\sqrt{V_*(u,v)}+P_2^*\1_{\{(u,v)\in \K\}}\,\forall (u,v)\in \R^{2,\circ}_+.
\end{equation}
Combining \eqref{e4.22}, Lemma \ref{petite} and \cite[Theorem 3.6]{JR} yields
\begin{equation}\label{e4.23}
n\|\PP(nmT^*,(u,v),\cdot)-\pi^*\|\to 0 \text{ as } n\to\infty,
\end{equation}
for some invariant probability measure $\pi^*$ of the Markov chain $(S_{u,v}(nmT^*), I_{u,v}(nmT^*))$.
Let $\tau_{\K}=\inf\{n\in\N: (S_{u,v}(nmT^*), I_{u,v}(nmT^*))\in \K\}$. It follows from the proof of \cite[Theorem 3.6]{JR} that \eqref{e4.22} implies $\E \tau_{\K}<\infty$. Therefore, the Markov process $(S_{u,v}(t), I_{u,v}(t))$ has a (unique) invariant probability measure $\pi_*$\; see \cite[Theorem 4.1]{WK}.
Which means that $\pi_*$ is also an invariant probability measure of the Markov chain $(S_{u,v}(nmT^*), I_{u,v}(nmT^*))$.
In light of \eqref{e4.23}, we must have $\pi_*=\pi^*$, or equivalently, $\pi^*$ is an invariant measure of
the Markov process $(S_{u,v}(t), I_{u,v}(t))$.

In the proofs, we use the function $(\ln^- v)^2$ for the sake of simplicity.
In fact, we can treat $(\ln^- v)^{1+\hat q}$ for any small $\hat q\in(0,1)$ in the same manner.
In more details, by the same arguments, we can obtain that there are $m_{\hat q}$, $T^*_{\hat q}$, $P_{1,\hat q}^*, P_{2,\hat q}^*>0$, a compact set $\K_{\hat q}$ satisfying
\begin{equation*}
\E V_{\hat q}(S_{u,v}(m_{\hat q}T^*_{\hat q}),I_{u,v}(m_{\hat q}T^*_{\hat q}))\leq V_{\hat q}(u,v)-P_{1,\hat q}^*[V_{\hat q}(u,v)]^{\frac1{1+{\hat q}}}+P_{2,\hat q}^*\1_{\{(u,v)\in \K_{\hat q}\}}\,\forall (u,v)\in\R_+^{2,\circ},
\end{equation*}
where $V_{\hat q}(u,v)=V_1(u,v)+(\ln^- v)^{1+{\hat q}}.$
By applying \cite[Theorem 3.6]{JR}, we obtain
$$n^{1/{\hat q}}\|\PP(nm_{\hat q}T^*_{\hat q},(u,v),\cdot)-\pi^*\|\to 0 \text{ as } n\to\infty.$$
Since $\|\PP(t,(u,v),\cdot)-\pi^*\|$ is decreasing in $t$, we easily deduce
$$t^{q^*}\|\PP(t,(u,v),\cdot)-\pi^*\|\to 0 \text{ as } t\to\infty$$
with $q^*=1/{\hat q}\in(1,\infty)$.
It follows from \cite[Theorem 8.1]{MT2} or \cite{WK}, we get the strong law of large number.
\end{proof}

\section{Numerical Examples and Discussion}\label{discuss}
\subsection{Numerical Examples}
In this section, let us give some numerical examples to illustrate our obtained results. We consider the following equation
\begin{equation}\label{example}
\begin{cases}
dS(t)=\Big[a_1-b_1 S(t)
-\dfrac{cS(t)I(t)}{1+S(t)+I(t)}\Big]dt + \sigma_1S(t)dB_1(s)-\dfrac{mS(t)I(t)}{1+S(t)+I(t)}dB_3(s),\\[1ex]
dI(t)=\Big[-b_2+cS(t)I(t)+\dfrac{cS(t)I(t)}{1+S(t)+I(t)}\Big]dt+\sigma_2I(t) mB_2(t)+\dfrac{mS(t)I(t)}{1+S(t)+I(t)}dB_3(s),
\end{cases}
\end{equation}
and the corresponding equation on boundary
\begin{equation*}
d\varphi (t)=\big[a_1-b_1\varphi(t)\big]dt+\sigma_1\varphi(t)dB_1(t).
\end{equation*}
\begin{example}\label{ex-1}

Consider \eqref{example} with parameters 
$a_1=3;
b_1=1; 
b_2=1;
c=1;
\sigma_1=1; 
\sigma_2=1; 
m=1$. 
The direct calculation yields  $\lambda \approx -3.01$ (equivalent $R<1$).  As is seen  from  Figure \ref{comparison} that  although $S(t),\varphi(t)$ start from a same initial value, the graph of $S(t)$ neither lies above the graph of $\varphi(t)$ nor lies below. This means that the comparison theorem is no longer valid in the model with multi noises. However, in view of Theorem \ref{R<1}, $I(t)$ tends to 0 regardless initial values while $S(t)$ converges to an ergodic process $\varphi(t)$. 
\begin{figure}[hpt]
\centering
\includegraphics[width=7cm, height=5.5cm]{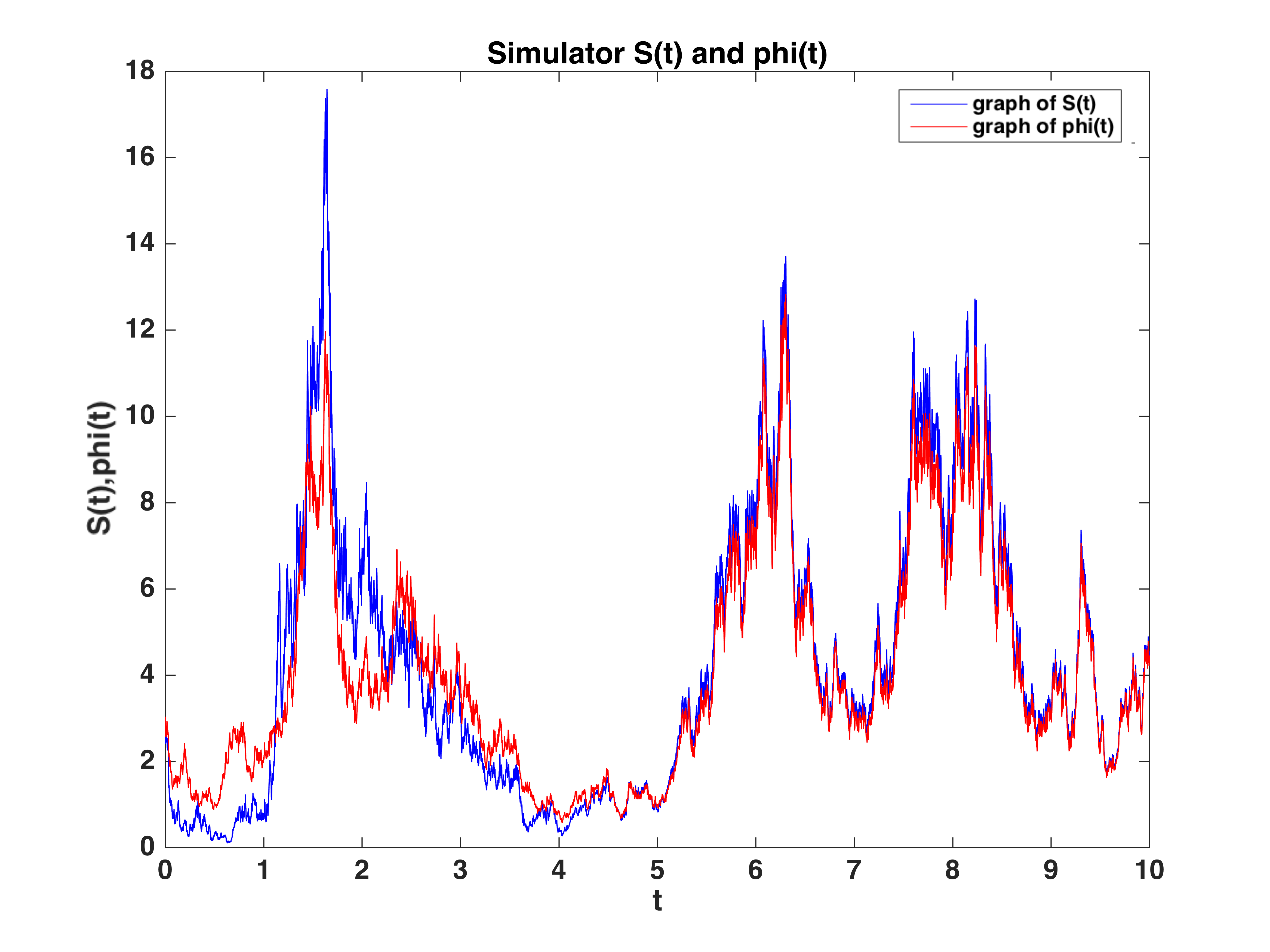}
\includegraphics[width=7.5682cm, height=5.5cm]{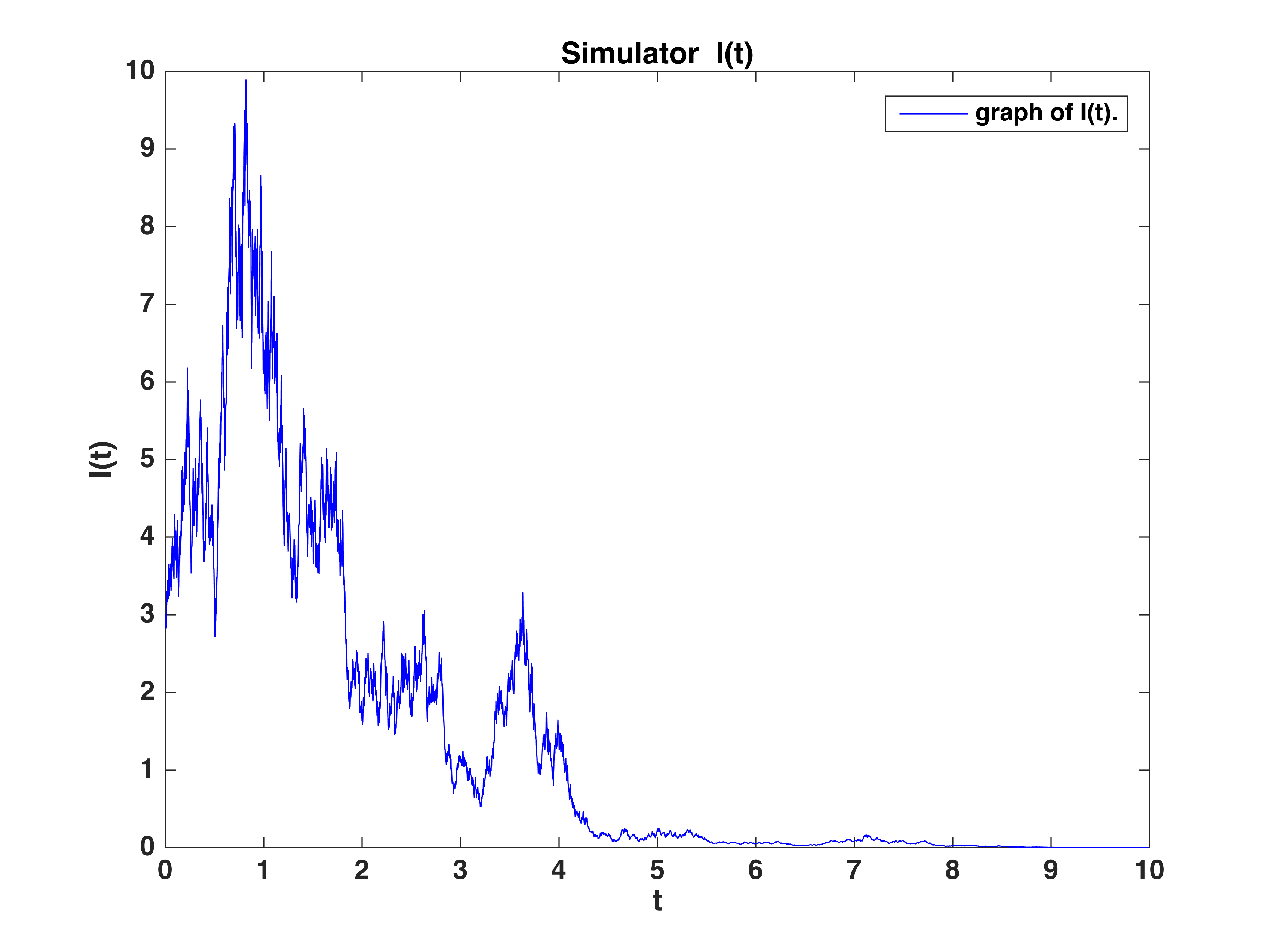}
\caption{Trajectories of $S(t)$ (blue line) and $\varphi(t)$ (red line) and $I(t)$  in Example \ref{ex-1} .}\label{comparison}
\end{figure}
\end{example}

We show an other example where $\lambda>0$ (equivalent $R>1)$. 
\begin{example}\label{ex-2}
Let 
$a_1=10;
b_1=1; 
b_2=1;
c=6;
\sigma_1=1; 
\sigma_2=1; 
m=1.
$
Calculating directly obtains $\lambda\approx 3.3611$, which means that the system \eqref{example} is permanent. Trajectories of $S(t), I(t)$ are shown in Figure \ref{f-R>1}. Moreover, the system \eqref{example} has a unique invariant measure $\pi^*$ concentrated on $\R_+^{2,\circ}$. We draw the empirical density of $\pi^*$ and phase portrait $(S(t),I(t))$ in Figure  \ref{f-R>1.1}.
\begin{figure}[hpt]
\centering
\includegraphics[width=7.5682cm, height=5.5cm]{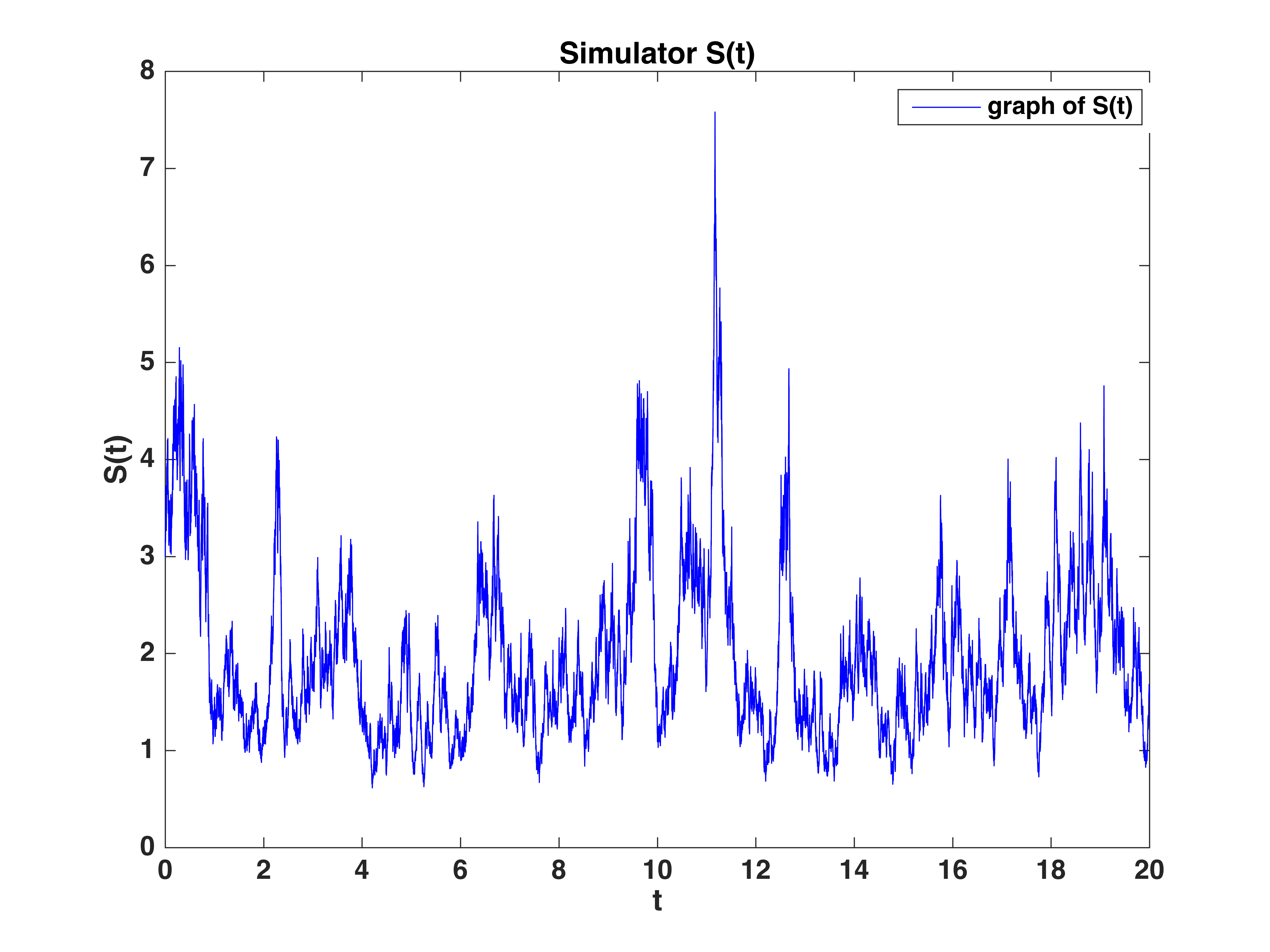}
\includegraphics[width=7.5682cm, height=5.5cm]{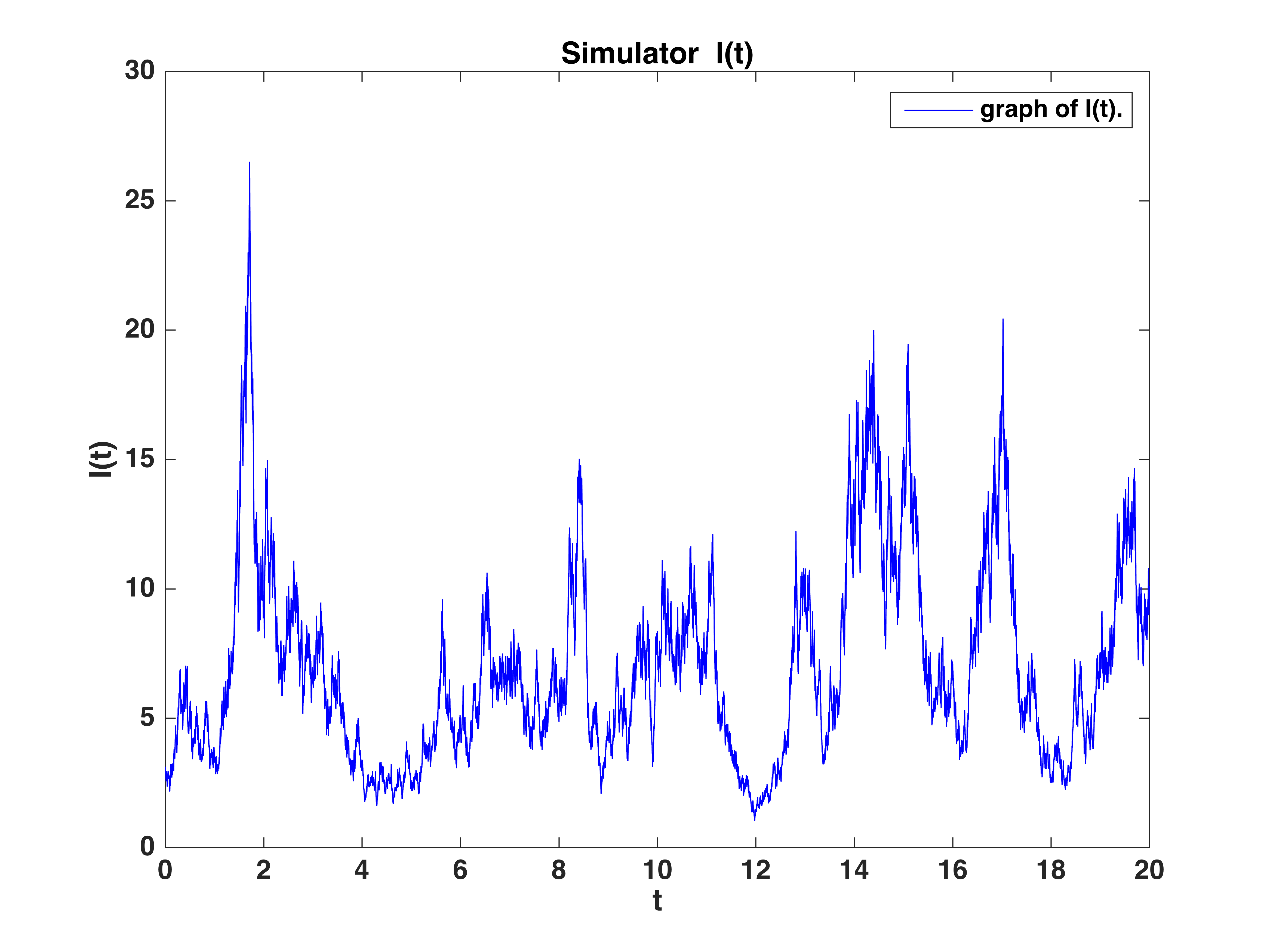}
\caption{Trajectories of $S(t), I(t)$ in Example \ref{ex-2}.}\label{f-R>1}
\end{figure}
\begin{figure}[h]
\centering
\includegraphics[width=7cm, height=5.5cm]{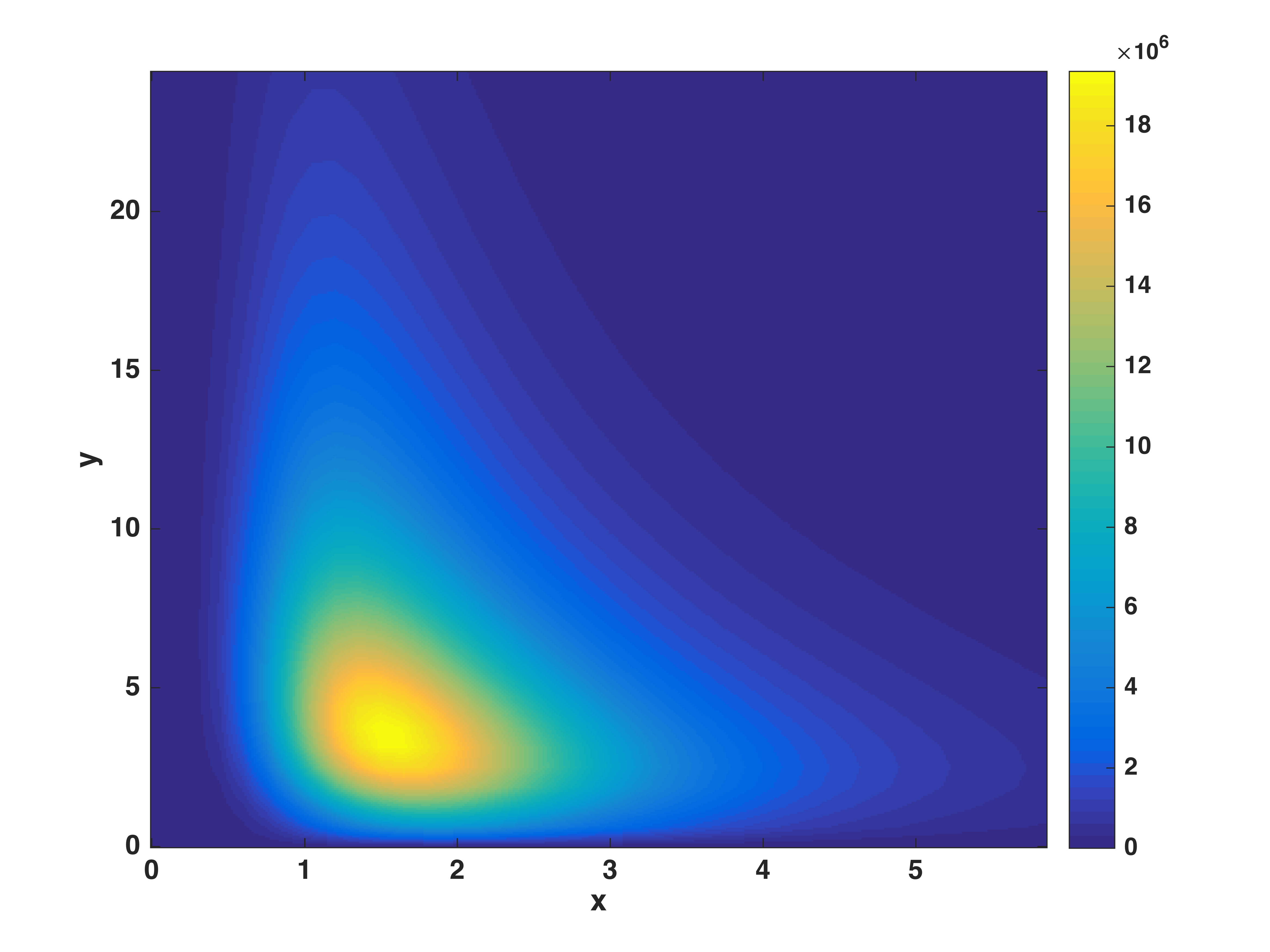}
\includegraphics[width=7cm, height=5.5cm]{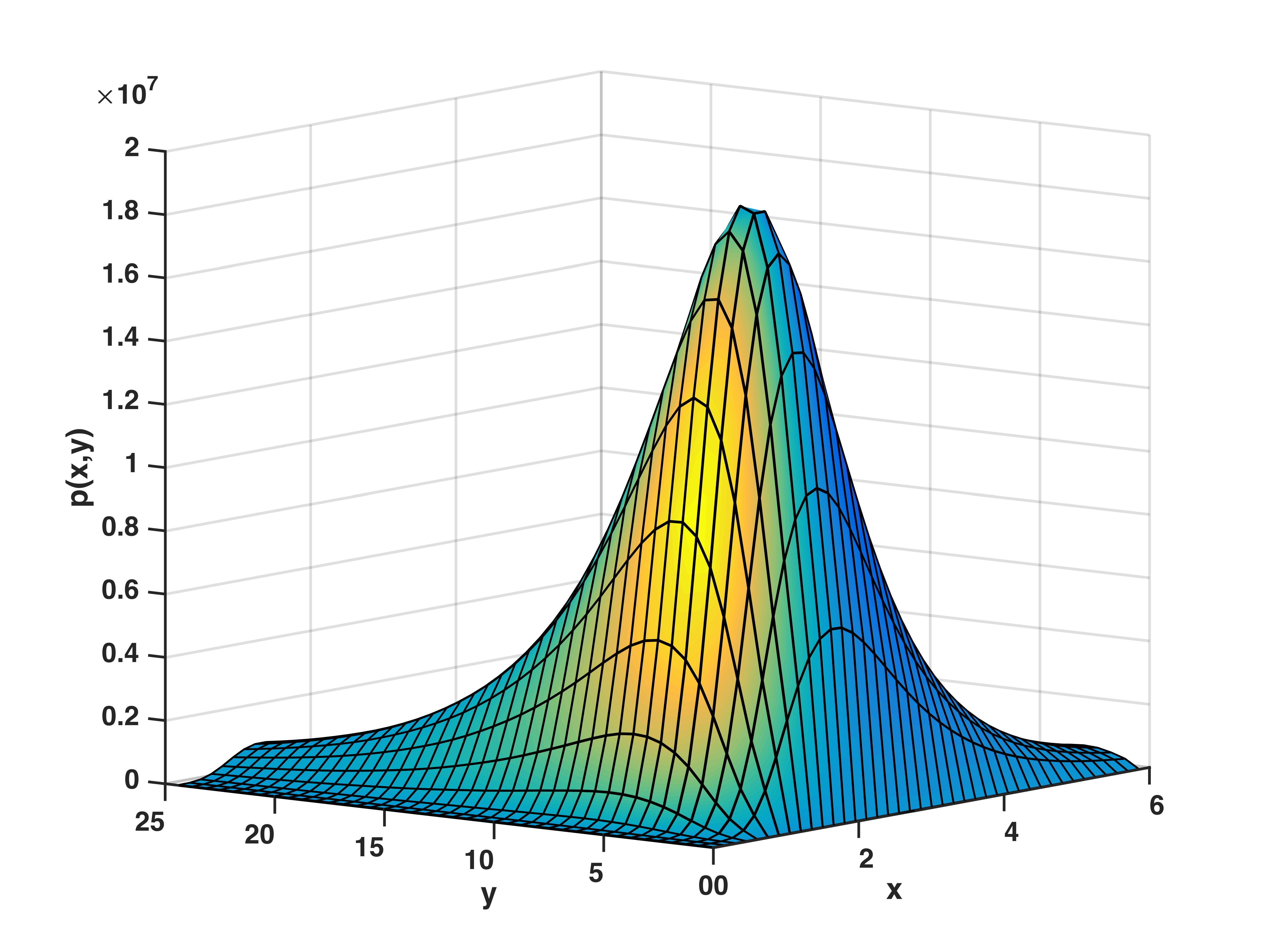}
\caption{Empirical phase and density of $\pi^*$ in Example \ref{ex-2} in 2D and 3D settings respectively.}\label{f-R>1.1}
\end{figure}\end{example}
\subsection{Discussion}
We discuss Theorem \ref{R<1} by providing alternative results in some special cases.
In case of $g(s,i)=0$, we can use the comparison Theorem \cite[Theorem 1.1, p.437]{IW} to obtain that $S_{u,v}(t)\leq \varphi_u(t)\a.s$ Therefore, we can always dominate $S_{u,v}(t)$ by the ergodic process $\varphi_u(t)$ and some estimates can be simplified. 
\begin{thm}
In case of $g(s,i)=0$, the results in Theorem \ref{R<1} hold without the condition $f(s,0)-\dfrac12 g^2(s,0)$ is monotonic.
\end{thm}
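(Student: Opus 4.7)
The plan is to revisit the proofs of Proposition \ref{lm3.1} and Theorem \ref{R<1} and locate the single juncture at which monotonicity of $f(s,0)-\frac{1}{2}g^2(s,0)$ enters, then argue that this usage can be replaced by the Ikeda--Watanabe comparison theorem once $g\equiv 0$. Setting $g\equiv 0$ in \eqref{e2.1}, the $S$-equation becomes $dS=[a_1-b_1S-If(S,I)]dt+\sigma_1 S\,dB_1(t)$, which is driven by the same noise $B_1$ and has the same diffusion coefficient as the boundary equation \eqref{phi} for $\varphi_u$. Since the additional drift term $-If(S,I)$ is non-positive, \cite[Theorem 1.1, p.437]{IW} is applicable and yields $S_{u,v}(t)\leq \varphi_u(t)$ almost surely for all $t\geq 0$. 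This inequality is the whole point: it restores the dominance relation that had to be manufactured from monotonicity in the general case.

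In the proof of Proposition \ref{lm3.1}, monotonicity was invoked exactly once, in the inequality immediately after \eqref{lni2}, to conclude $\int_0^t[f(\varphi_u,0)-f(\varphi_H,0)]ds\leq 0$ (with the $g^2$ terms now absent) so that ergodicity could be applied to the single process $\varphi_H$ uniformly in $u\in[H^{-1},H]$. With the comparison theorem in hand, this detour through $\varphi_H$ is unnecessary. I would rewrite the decomposition directly in terms of $\varphi_u$:
\begin{equation*}
\ln I_{u,v}(t)=\ln v-c_2 t+\int_0^t f(\varphi_u(s),0)ds+\int_0^t[f(S_{u,v},0)-f(\varphi_u,0)]ds+\int_0^t[f(S_{u,v},I_{u,v})-f(S_{u,v},0)]ds+\sigma_2 B_2(t),
\end{equation*}
and on the controlled interval $[T,\zeta_{u,v}]$ bound the last two integrals by $F(\nu+\eta)t$ via the Lipschitz assumption, exactly as in the original argument.

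The one remaining subtlety is uniform-in-$u$ control of $\frac{1}{t}\int_0^t f(\varphi_u(s),0)ds$ over $u\in[H^{-1},H]$. This follows from the explicit variation-of-constants representation $\varphi_u(t)-\varphi_0(t)=u\exp(-c_1 t+\sigma_1 B_1(t))$, which decays exponentially as $t\to\infty$ almost surely and uniformly in $u$ on compacts. Combined with Lipschitz continuity of $f$ and ergodicity of $\varphi_0$ with density $f^*$, one obtains $\limsup_{t\to\infty}\frac{1}{t}\int_0^t f(\varphi_u(s),0)ds\leq c_2+\lambda+\eps$ on an event of probability $\geq 1-\eps$, with a deterministic $T_1=T_1(\eps)$ valid for every $u\in[H^{-1},H]$. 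This furnishes a replacement for the original set $\Omega_1^{u,v}$. Every subsequent estimate in Proposition \ref{lm3.1}---the bound \eqref{I}, as well as the $A_1^{u,v}$ and $A_2^{u,v}$ controls---goes through; in fact $A_2^{u,v}\equiv 0$ when $g\equiv 0$, so that portion simplifies substantially. Once Proposition \ref{lm3.1} is re-established, the proof of Theorem \ref{R<1} itself requires no further change, since its second part already handled $|S_{u,v}-\varphi_u|$ through the $A_1^{u',v'},A_2^{u',v'}$ decomposition and never invoked monotonicity.

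The main obstacle is bookkeeping rather than new technology: one must check that Propositions \ref{lem2.2} and \ref{extinction}, Lemma \ref{phi1}, and the exponential martingale and Lemma \ref{br} estimates all remain valid without the monotonicity hypothesis (they do, since none of them used it), and that the uniform ergodicity of $\varphi_u$ in $u\in[H^{-1},H]$ can indeed be substituted for every appearance of the $\varphi_H$-based bound. Neither step requires anything beyond the exponential coupling $\varphi_u-\varphi_0=u\,e^{-c_1 t+\sigma_1 B_1(t)}$ and the Lipschitz property of $f$ already assumed in \ref{conditionfg}.
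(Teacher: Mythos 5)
Your proposal is correct and rests on the same key observation the paper itself gives just before this theorem: once $g\equiv 0$ the diffusion coefficients of $S_{u,v}$ and $\varphi_u$ coincide and the drift of $S_{u,v}$ is dominated, so the Ikeda--Watanabe comparison theorem yields $S_{u,v}(t)\le\varphi_u(t)$ a.s.\ and the argument of \cite{NHU2} applies; the paper's ``proof'' is in fact only that one-line citation, so you have supplied the details it delegates. The one place where your route is genuinely your own is the treatment of the uniform-in-$u$ ergodic average: the paper's Proposition \ref{lm3.1} secures it by bounding the term $\int_0^t\bigl(f(\varphi_u,0)-f(\varphi_H,0)\bigr)ds$ via $\varphi_u\le\varphi_H$ \emph{together with} monotonicity, whereas you replace this by the exact coupling $\varphi_u(t)-\varphi_0(t)=u\,e^{-c_1t+\sigma_1B_1(t)}$ plus the Lipschitz property of $f$, which makes that difference term $O(1/t)$ on a high-probability event without any monotonicity. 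This is a clean and self-contained substitute (and, as a side remark, it does not actually use $g\equiv 0$, which hints that the monotonicity hypothesis in Theorem \ref{R<1} is only doing work at that single step); the remaining checks you list --- that Propositions \ref{lem2.2}, \ref{extinction}, Lemma \ref{phi1}, Lemma \ref{br} and the second half of the proof of Theorem \ref{R<1} never invoke monotonicity, and that $A_2^{u,v}\equiv 0$ --- are accurate, so the argument closes.
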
 
\begin{proof}
The proof can be similarly obtained as in \cite[proof of Theorem 2.1]{NHU2}.
\end{proof}
If the function $\dfrac {F(s,i)}{s}=\dfrac{if(s,i)}{s}$ is uniformly bounded, for example, the Beddington–DeAngelis incidence rate or the nonlinear functional response with $l=h$, we also obtain the results as in Theorem \ref{R<1}.
\begin{thm}
The results in Theorem \ref{R<1} hold if we replace the condition $f(s,0)-\dfrac 12g^2(s,0)$ is monotonic by the condition $\dfrac{F(s,i)}{s}$ is uniformly bounded.
\end{thm}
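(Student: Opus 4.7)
The proof proceeds along the same lines as Theorem \ref{R<1}, with the only substantive modification occurring in the proof of Proposition \ref{lm3.1}. In that proposition, the monotonicity of $f(s,0)-\tfrac12 g^2(s,0)$ was used only at one step in \eqref{lni2}, where, together with $\varphi_u\leq\varphi_H$, it produced the one-sided bound
\begin{equation*}
\int_0^t \Bigl[f(\varphi_u(s),0)-\tfrac12 g^2(\varphi_u(s),0)-f(\varphi_H(s),0)+\tfrac12 g^2(\varphi_H(s),0)\Bigr]\,ds\leq 0.
\end{equation*}
My plan is to replace this qualitative bound by a quantitative $O(1)$ bound coming from Lipschitz continuity, and then to use the new hypothesis $F(s,i)/s\leq M_F$ to re-estimate the variation-of-constants integrand for $S_{u,v}-\varphi_u$.

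The key observation is that $\varphi_H$ and $\varphi_u$ solve the same scalar SDE \eqref{phi} driven by the common Brownian motion $B_1$. Subtracting, the difference $\Delta(t):=\varphi_H(t)-\varphi_u(t)$ satisfies the linear SDE $d\Delta=-b_1\Delta\,dt+\sigma_1\Delta\,dB_1$, so $\varphi_H(t)-\varphi_u(t)=(H-u)\exp(-c_1 t+\sigma_1 B_1(t))$. Since $B_1(t)/t\to 0$ almost surely, $\int_0^\infty e^{-c_1 s+\sigma_1 B_1(s)}\,ds$ is a.s.\ finite and, on a set of probability at least $1-\eps$, bounded by a deterministic constant $\Psi(\eps)$. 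Together with Lipschitz continuity of $f,g$ and boundedness of $g$, the displayed integral is therefore bounded in absolute value by $(F+KG)(H-u)\Psi(\eps)$, a constant independent of $t$. Consequently this term contributes only an additive $O(1)$ to \eqref{lni2}, which does not affect the asymptotic slope $(\lambda+6\eps)t$.

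The uniform boundedness $F(s,i)/s=if(s,i)/s\leq M_F$ then enters in the estimate of $A_1^{u,v}(t)$ in \eqref{def1}. Under this hypothesis we have the pathwise bound $I_{u,v}(s)\,f(S_{u,v}(s),I_{u,v}(s))=F(S_{u,v},I_{u,v})\leq M_F S_{u,v}(s)$, which no longer requires the smallness of $I_{u,v}(s)$. Combined with the polynomial growth bound $\varphi_u(s)\leq L_1+L_2 s$ from Lemma \ref{phi1} and the previous closeness $\abs{S_{u,v}-\varphi_u}\leq\nu$ on $[0,\zeta_{u,v}]$, a standard variation-of-constants computation (parallel to the one producing $\Phi_2(\eps),\Phi_3(\eps)$ in the original proof) yields an estimate of $A_1^{u,v}(t\wedge\zeta_{u,v})$ uniform in $u\in[H^{-1},H]$ and vanishing as $v,\eta\to 0$. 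The estimate of $A_2^{u,v}$ is unchanged.

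The main obstacle is the careful book-keeping of the nested stopping times and the small parameters $\eta,\nu,\delta,\widehat\delta$: replacing one qualitative sign-comparison by a quantitative estimate requires checking that the constants can still be chosen in compatible order so that the entire chain of inequalities closes within the union of events $\Omega_i^{u,v}$. Once this is verified, the remainder of Proposition \ref{lm3.1} — the derivation of \eqref{I}, the estimate of $\abs{S_{u,v}-\varphi_u}$ via $A_1^{u,v}+A_2^{u,v}$, and the conclusion $\limsup_{t\to\infty} \abs{t^{-1}\ln I_{u,v}(t)-\lambda}\leq\eps$ — transcribes verbatim, as does the subsequent deduction of Theorem \ref{R<1}.
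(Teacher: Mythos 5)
The paper itself offers no argument here beyond a citation to \cite{NHU1}, so your proposal must stand on its own. Its first half does: the observation that $\varphi_H(t)-\varphi_u(t)=(H-u)e^{-c_1t+\sigma_1B_1(t)}$ solves the homogeneous linear SDE exactly, so that
$$\int_0^t\Bigl|f(\varphi_u(s),0)-\tfrac12g^2(\varphi_u(s),0)-f(\varphi_H(s),0)+\tfrac12g^2(\varphi_H(s),0)\Bigr|\,ds\le (F+KG)H\int_0^\infty e^{-c_1s+\sigma_1B_1(s)}\,ds,$$
which on the event $\Omega_4^{u,v}$ is bounded by a deterministic constant, is correct and cleanly replaces the one-sided monotonicity bound in \eqref{lni2}; the extra additive constant propagates harmlessly into \eqref{I}, multiplying the bound there by a fixed constant factor that the final choice of $\hat\delta$ can absorb. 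Note, however, that this step uses neither monotonicity nor the new hypothesis, so if your argument closed it would prove Theorem \ref{R<1} with no auxiliary condition at all --- a strictly stronger claim than the statement, which should have prompted you to check where the hypothesis $F(s,i)/s\le M_F$ is actually consumed.

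The genuine gap is in your re-estimate of $A_1^{u,v}$. Replacing $I_{u,v}(s)f(S_{u,v}(s),I_{u,v}(s))$ by $M_FS_{u,v}(s)\le M_F(\nu+L_1+L_2s)$ discards precisely the factor that makes $A_1^{u,v}$ small: the resulting bound
$$M_Fe^{-c_1t+\sigma_1B_1(t)}\int_0^te^{c_1s-\sigma_1B_1(s)}(\nu+L_1+L_2s)\,ds$$
is of order $L_1+L_2t$ and has no dependence on $v$ or $\eta$ whatsoever, so the assertion that it vanishes as $v,\eta\to0$ is false, and the stopping-time argument showing that $\xi^\nu_{u,v}$ never fires before $\tau^\eta_{u,v}$ cannot close. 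The repair is simply to leave the original estimate of $A_1^{u,v}$ untouched: it uses only $f(S,I)\le F(S+I)$, the bound $I_{u,v}\le\eta$ on $[0,T]$, and the exponential decay \eqref{I} on $[T,\zeta_{u,v}]$, all of which survive your modification of \eqref{lni2}. (For what it is worth, the route actually intended by the hypothesis --- the one in \cite{NHU1} --- exploits the boundedness of $I f(S,I)/S=F(S,I)/S$ in the equation \eqref{lns} for $\ln S_{u,v}$ to control the susceptible class directly, rather than through the variation-of-constants formula for $S_{u,v}-\varphi_u$; that is a genuinely different mechanism from the one you describe.)
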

\begin{proof}
The proof is similar to \cite{NHU1}. 
\end{proof}

\end{document}